\def\N{\mathbb{N}}
\def\Z{\mathbb{Z}}
\def\R{\mathbb{R}}
\DeclareMathOperator{\argmin}{argmin}
\def\eps{\varepsilon}
\newtheorem{theorem}{Theorem}[section]
\newtheorem{lemma}[theorem]{Lemma}
\newtheorem{proposition}[theorem]{Proposition}
\theoremstyle{definition}
\newtheorem{definition}[theorem]{Definition}
\theoremstyle{remark}
\newtheorem{remark}[theorem]{Remark}
\newtheorem{example}[theorem]{Example}
\title{A junction condition by specified homogenization\\
and application to traffic lights}
 \author{G. Galise\footnote{Department of Mathematics, University of Salerno, Via Giovanni Paolo II, 132,
84084 Fisciano (SA), Italy}, \;
   C. Imbert\footnote{CNRS, UMR 7580, Universit\'e Paris-Est
     Cr\'eteil, 61 avenue du G\'en\'eral de Gaulle, 94 010 Cr\'eteil
     cedex, France}, \; R. Monneau\footnote{Universit\'e Paris-Est, CERMICS (ENPC), 
6-8 Avenue Blaise Pascal, Cit\'e Descartes, Champs-sur-Marne,
F-77455 Marne-la-Vall\'ee Cedex 2, France}}
\begin{document}

\maketitle

\begin{abstract}
  Given a coercive Hamiltonian which is quasi-convex with respect to
  the gradient variable and periodic with respect to time and space at
  least ``far away from the origin'', we consider the solution of the
  Cauchy problem of the corresponding Hamilton-Jacobi equation posed
  on the real line. Compact perturbations of coercive periodic
  quasi-convex Hamiltonians enter into this framework for example.  We
  prove that the rescaled solution converges towards the solution of
  the expected effective Hamilton-Jacobi equation, but whose ``flux''
  at the origin is ``limited'' in a sense made precise by the authors in
  \cite{im}. In other words, the homogenization of such a
  Hamilton-Jacobi equation yields to supplement the expected
  homogenized Hamilton-Jacobi equation with a junction condition at
  the single discontinuous point of the effective Hamiltonian. We also
  illustrate possible applications of such a result by deriving, for a
  traffic flow problem, the effective flux limiter generated by the
  presence of a finite number of traffic lights on an ideal road. We
  also provide meaningful qualitative properties of the effective
  limiter.
\end{abstract}

\paragraph{AMS Classification:} 35F21, 49L25, 35B27

\paragraph{Keywords:} Hamilton-Jacobi equations, quasi-convex
Hamiltonians, homogenization, \\ junction condition, flux-limited
solution, viscosity solution.

\section{Introduction}

\subsection{Setting of the general problem}

This article is concerned with the study of the limit of the solution
$u^\varepsilon(t,x)$ of the following equation
\begin{equation}\label{eq:hj-eps}
u^\eps_t +H\left(\frac{t}\eps,\frac{x}\eps,u^\eps_x\right) = 0 \quad \text{ for }  
(t,x)\in(0,T)\times \R
\end{equation}
submitted to the initial condition
\begin{equation}\label{eq:ic}
u^\eps(0,x)=u_0(x) \quad \text{for } x\in  \R
\end{equation}
for a Hamiltonian $H$ satisfying the following assumptions:
\begin{itemize}
\item[\textbf{(A0)}] (Continuity) $H\colon \R^3 \to \R$ is continuous.
\item[\textbf{(A1)}] (Time periodicity) For all $k\in \Z$ and
  $(t,x,p)\in \R^3$, 
\[H(t+k,x,p)=H(t,x,p).\]
\item[\textbf{(A2)}] (Uniform modulus of continuity in time)
There exists a modulus of continuity $\omega$ such that for all
$t,s,x,p \in \R$, 
\[H(t,x,p)-H(s,x,p)\le \omega(\left|t-s\right|\left(1+\max
\left(H(s,x,p),0\right)\right)).\]
\item[\textbf{(A3)}] (Uniform coercivity)
\[\lim_{|q|\to +\infty} H(t,x,q)=+\infty\]
uniformly with respect to $(t,x)$. 
\item[\textbf{(A4)}] (Quasi-convexity of $H$ for large $x$'s) There
 exists some $\rho_0>0$ such that for all $x\in \R\setminus
 (-\rho_0,\rho_0)$, there exists a continuous map $t\mapsto p^0(t,x)$
 such that
\[\left\{\begin{array}{l}
H(t,x,\cdot) \quad \text{is non-increasing in}\quad (-\infty,p^0(t,x)),\\
H(t,x,\cdot) \quad \text{is non-decreasing in}\quad (p^0(t,x),+\infty).
\end{array}\right.\]
\item[\textbf{(A5)}] (Left and right Hamiltonians)
There exist two Hamiltonians $H_\alpha(t,x,p)$, $\alpha=L,R$,
such that \[\left\{\begin{array}{l}
H(t,x+k,p) -H_L(t,x,p)\to 0 \quad \text{as}\quad \Z\ni k\to -\infty \\
H(t,x+k,p) -H_R(t,x,p)\to 0 \quad \text{as}\quad \Z\ni k\to +\infty
\end{array}\right.\]
uniformly with respect to $(t,x,p)\in [0,1]^2\times \R$, and 
for all $k,j\in \Z$, $(t,x,p)\in \R^3$ and $\alpha \in \{L,R\}$,  
\[H_\alpha(t+k,x+j,p)=H_\alpha(t,x,p).\]
\end{itemize}

We have to impose some condition in order to ensure that effective Hamiltonians $\bar H_\alpha$ are quasi-convex; indeed, we will see that the effective equation
should be solved with \emph{flux-limited solutions} recently introduced by 
the the second and third authors \cite{im}; such a theory relies on the 
quasi-convexity of the Hamiltonians. 
\begin{itemize}
\item[\textbf{(B-i)}] (Quasi-convexity of the left and right
  Hamiltonians) For each $\alpha=L,R$, $H_\alpha$ does not depend on
  time and there exists $p_\alpha^0$ (independent on $(t,x)$) such that
\[\left\{\begin{array}{l}
H_\alpha(x,\cdot) \quad \text{is non-increasing on}\quad (-\infty,p^0_\alpha),\\
H_\alpha(x,\cdot) \quad \text{is non-decreasing on}\quad (p^0_\alpha,+\infty).
\end{array}\right.\]
\item[\textbf{(B-ii)}] (Convexity of the left and right Hamiltonians)
 For each $\alpha=L,R$, and for all $(t,x)\in \R\times \R$, the map
 $p\mapsto H_\alpha(t,x,p)$ is convex.
\end{itemize}
\begin{example}
A simple example of such a Hamiltonian is
\[H(t,x,p)= \left|p\right| - f(t,x)\]
with a continuous function $f$ satisfying $f(t+1,x)=f(t,x)$ and
$f(t,x)\to 0$ as $\left|x\right|\to +\infty$ uniformly with respect to
$t\in \R$.
\end{example}

\subsection{Main results}

Our main result is concerned with the limit of the solution $u^\eps$
of \eqref{eq:hj-eps}-\eqref{eq:ic}. It makes part of the huge
literature dealing with homogenization of Hamilton-Jacobi equation,
starting with the pioneering work of Lions, Papanicolaou and Varadhan
\cite{lpv}. In particular, we need to use the perturbed test function
introduced by Evans \cite{evans}. As pointed out to us by the referee,
there are few papers dealing with Hamiltonians that depend on time; it
implies in particular that so-called correctors also depend on time.
The reader is in particular referred to \cite{bs,br} for the large
time behaviour and to \cite{fim09a,fim09b,fim12} for homogenization
results.  This limit satisfies an effective Hamilton-Jacobi equation
posed on the real line whose Hamiltonian is discontinuous. More
precisely, the effective Hamiltonian equals the one which is expected
(see \textbf{(A5)}) in $(-\infty;0)$ and $(0;+\infty)$; in particular,
it is discontinuous in the space variable (piecewise constant in
fact).  In order to get a unique solution, a flux limiter should be
identified \cite{im}.

\subsubsection*{Homogenized Hamiltonians and effective flux limiter}

The homogenized left and right Hamiltonians are classically determined
by the study of some ``cell problems''.
\begin{proposition}[Homogenized left and right
    Hamiltonians]\label{prop:quasi-conv} Assume  \textbf{\upshape (A0)-(A5)},
  and either \textbf{\upshape (B-i)} or \textbf{\upshape (B-ii)}. Then for every
  $p\in \R$, and $\alpha=L,R$, there exists a unique $\lambda\in\R$
  such that there exists a bounded solution $v^\alpha$ of
\begin{equation}\label{eq:cell-alpha}
\left\{\begin{array}{l}
v^\alpha_t + H_\alpha(t,x,p+v^\alpha_x)=\lambda \quad \text{in}\quad \R\times \R,\\
v^\alpha \text{ is $\Z^2$-periodic}.
\end{array}\right.
\end{equation}
If $\bar H_\alpha(p)$ denotes such a $\lambda$, then the map $p\mapsto
\bar H_\alpha(p)$ is continuous, coercive and quasi-convex.
\end{proposition}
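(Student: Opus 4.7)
The plan is to follow the classical strategy for homogenization of Hamilton--Jacobi equations via an ergodic/``cell'' problem, taking care of the fact that the Hamiltonian depends on time.

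\textbf{Step 1 (existence of a corrector and of $\bar H_\alpha(p)$).} First I would introduce the approximate (``discounted'') cell problem
\[
\delta u^\delta + u^\delta_t + H_\alpha(t,x, p + u^\delta_x) = 0 \quad \text{in } \R\times\R,
\]
with $u^\delta$ $\Z^2$-periodic. Since $H_\alpha(\cdot,\cdot,p)$ is bounded on the torus, constants are sub- and supersolutions, and Perron's method together with the comparison principle yield a unique bounded viscosity solution satisfying $\|\delta u^\delta\|_\infty \le C_p$ independently of $\delta$. Uniform coercivity \textbf{(A3)}, combined with \textbf{(A2)}, gives equi-Lipschitz estimates on $u^\delta$ in $(t,x)$, again independent of $\delta$ (by the usual doubling-of-variables argument at a penalty maximum, where coercivity forces slopes to stay bounded). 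Consequently $v^\delta := u^\delta - u^\delta(0,0)$ is equi-Lipschitz and, by $\Z^2$-periodicity, equi-bounded, while $\osc(\delta u^\delta) \to 0$. Along a subsequence, Arzel\`a--Ascoli yields $v^\delta \to v^\alpha$ uniformly on compact sets and $-\delta u^\delta \to \lambda$ for some constant $\lambda$; stability of viscosity solutions shows that $(v^\alpha,\lambda)$ solves \eqref{eq:cell-alpha}, and I define $\bar H_\alpha(p) := \lambda$.

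\textbf{Step 2 (uniqueness of $\lambda$, continuity and coercivity).} Uniqueness of $\lambda$ is the standard ergodic argument: if two pairs $(v_i,\lambda_i)$ existed with $\lambda_1<\lambda_2$, then $w(t,x) := v_2(t,x) - (\lambda_2-\lambda_1)t$ would solve the cell problem with right-hand side $\lambda_1$; comparison with $v_1$ together with the unboundedness of $t\mapsto w$ as $t\to+\infty$ contradicts the boundedness of $v_1$. Continuity of $p\mapsto\bar H_\alpha(p)$ follows from stability of viscosity solutions and the fact that the Lipschitz bounds on the correctors are locally uniform in $p$. For coercivity, I would rewrite the equation as $H_\alpha(t,x,p+v^\alpha_x) = \bar H_\alpha(p) - v^\alpha_t$: the Lipschitz bound $|v^\alpha_t|\le L_p$ gives $|H_\alpha(t,x,p+v^\alpha_x)| \le \bar H_\alpha(p) + L_p$, so by coercivity $p + v^\alpha_x \in [-R,R]$ a.e., and $\Z$-periodicity of $v^\alpha$ in $x$ yields $\int_0^1 v^\alpha_x\,dx = 0$, hence $|p|\le R$. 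The contrapositive gives $\bar H_\alpha(p)\to +\infty$ as $|p|\to +\infty$.

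\textbf{Step 3 (quasi-convexity).} This is the main obstacle and the argument splits according to which of \textbf{(B-i)}, \textbf{(B-ii)} is assumed. Under \textbf{(B-ii)} ($H_\alpha$ convex in $p$), I would argue that for $p = \theta p_1 + (1-\theta)p_2$ the convex combination $\theta v^\alpha_1 + (1-\theta) v^\alpha_2$ of correctors is a viscosity subsolution of the cell problem at $p$ with right-hand side $\theta\lambda_1 + (1-\theta)\lambda_2$ (verified on smooth test functions via convexity of $H_\alpha$ in $p$), whence Step 2 gives $\bar H_\alpha(p)\le \theta\bar H_\alpha(p_1) + (1-\theta)\bar H_\alpha(p_2)$, i.e. convexity, hence quasi-convexity. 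Under \textbf{(B-i)}, $H_\alpha(x,p)$ is time-independent and quasi-convex with transition point $p^0_\alpha$ \emph{independent of $x$}; I would then decompose
\[
H_\alpha(x,p) = \max\bigl(H_\alpha^+(x,p),\, H_\alpha^-(x,p)\bigr),
\]
where $H_\alpha^+(x,p) := H_\alpha(x,\max(p,p^0_\alpha))$ is non-decreasing in $p$ and $H_\alpha^-(x,p) := H_\alpha(x,\min(p,p^0_\alpha))$ is non-increasing in $p$. Applying Step 1 to each branch produces effective Hamiltonians $\bar H_\alpha^\pm$ which inherit the monotonicity of their branch. The identification $\bar H_\alpha(p) = \max\bigl(\bar H_\alpha^+(p), \bar H_\alpha^-(p)\bigr)$ can then be obtained by matching the correctors of the max-equation with those of the two branches (the $x$-independence of $p^0_\alpha$ being the key point that prevents interior discontinuities from appearing in the patching). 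Since $\bar H_\alpha$ is the maximum of a non-decreasing and a non-increasing function, it is quasi-convex. The verification of this max-formula at the viscosity level is the technically delicate point of the proof.
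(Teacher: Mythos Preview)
Your Steps~1--2 follow essentially the paper's route (discounted approximation, Perron/compactness, comparison for uniqueness), with only cosmetic differences; your coercivity argument via $\int_0^1 v^\alpha_x\,dx=0$ is a valid alternative to the paper's argument, which simply evaluates at a maximum point of $(v^\alpha)^*$ to get $H_\alpha(\bar t,\bar x,p)\le\bar H_\alpha(p)$.

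Step~3 has genuine gaps in both branches.

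\textbf{Under (B-ii).} The assertion that the convex combination $\theta v_1+(1-\theta)v_2$ is a viscosity subsolution ``verified on smooth test functions'' is not correct as stated: if $\varphi$ touches $\theta v_1+(1-\theta)v_2$ from above at a point, you get no information on the individual $v_i$ there, so convexity of $p\mapsto H_\alpha(t,x,p)$ cannot be applied directly. The paper (Lemma~3.4) handles this carefully in three layers: first assume the correctors are locally Lipschitz, so the equation holds a.e.\ and the convex combination is an a.e.\ subsolution, then upgrade to a viscosity subsolution by mollifying and using convexity of $H_\alpha$ again on the mollified equation; next, for merely continuous correctors, perform a sup-convolution in time and exploit \textbf{(A2)} to reduce to the Lipschitz case; finally, for the general (possibly discontinuous) corrector, replace it by the solution of a Cauchy problem with linear initial data, which is continuous and stays within a bounded distance of the original. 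Your one-line justification skips all of this.

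\textbf{Under (B-i).} Your max-decomposition $H_\alpha=\max(H_\alpha^+,H_\alpha^-)$ and the claimed identity $\bar H_\alpha=\max(\bar H_\alpha^+,\bar H_\alpha^-)$ is a genuinely different route from the paper, and you correctly flag the identification as ``technically delicate''---but you do not prove it, and it is not obvious (the branches $H_\alpha^\pm$ are not coercive, and commuting homogenization with a pointwise maximum requires justification even with $p^0_\alpha$ independent of $x$). The paper avoids this entirely by a convexification trick: it finds a smooth increasing function $\gamma$ with $\gamma'\ge\delta_0>0$ such that $\gamma\circ H_\alpha$ is \emph{convex} in $p$ (this uses the structure in \textbf{(B-i)}, in particular that $p^0_\alpha$ does not depend on $x$, after a preliminary $C^2$ regularization of $H_\alpha$), then applies the \textbf{(B-ii)} case to $\gamma\circ H_\alpha$, and finally observes that $\bar H_\alpha=\gamma^{-1}\circ\overline{\gamma\circ H_\alpha}$, so quasi-convexity of $\bar H_\alpha$ follows from convexity of $\overline{\gamma\circ H_\alpha}$. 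This reduces \textbf{(B-i)} cleanly to \textbf{(B-ii)} and sidesteps the matching problem you raise.
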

\begin{remark}
  We recall that a function $\bar H_\alpha$ is quasi-convex if the
  sets $\{ \bar H_\alpha \le \lambda \}$ are convex for all $\lambda
  \in \R$. If $\bar H_\alpha$ is also coercive, then $\bar p^0_\alpha$
   denotes in proofs some $p \in \argmin \bar H_\alpha$. 
\end{remark}
The effective flux limiter $\bar A$ is the smallest $\lambda \in \R$ for which
there exists a solution $w$ of the following global-in-time
Hamilton-Jacobi equation
\begin{equation}\label{eq:cell}
\left\{\begin{array}{ll}
w_t + H(t,x,w_x)=\lambda, \quad (t,x)\in \R\times \R,\\
w \text{ is $1$-periodic w.r.t. } t.
\end{array}\right.
\end{equation}
\begin{theorem}[Effective flux limiter]\label{thm:bar-A}
Assume \textbf{\upshape (A0)-(A5)} and either \textbf{\upshape (B-i)}
  or \textbf{\upshape (B-ii)}.  The set 
\[ E= \{ \lambda \in \R: \exists \text{$w$ sub-solution of
  \eqref{eq:cell}} \} \]
is not empty and  bounded from below. Moreover, if $\bar A$ denotes
the infimum of $E$, then
\begin{equation}\label{eq::g1}
\bar A\ge A_0:=\max_{\alpha=L,R}\left(\min \bar H_\alpha\right).
\end{equation}
\end{theorem}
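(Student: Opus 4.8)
The plan is to handle the three claims as follows: non-emptiness of $E$ is elementary, while ``bounded from below'' and ``$\bar A\ge A_0$'' amount to the single assertion that every $\lambda\in E$ satisfies $\lambda\ge\min\bar H_R$ and $\lambda\ge\min\bar H_L$; granting this, $E\subseteq[A_0,+\infty)$, so $\bar A=\inf E\ge A_0$. For the non-emptiness I would note that $\Lambda_0:=\sup_{(t,x)\in\R^2}H(t,x,0)$ is finite: by \textbf{(A0)}--\textbf{(A1)} it suffices to bound $H(\cdot,\cdot,0)$ on $[0,1]\times\R$, which is bounded on the compact part $|x|\le\rho_0+1$ and, for $|x|$ large, uniformly close by \textbf{(A5)} to the $\Z^2$-periodic (hence bounded) functions $H_R(\cdot,\cdot,0)$ and $H_L(\cdot,\cdot,0)$. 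Then $w\equiv 0$ is a (classical, hence viscosity) sub-solution of \eqref{eq:cell} with $\lambda=\Lambda_0$, so $\Lambda_0\in E$.

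The technical heart is a regularity/compactness argument. First I would show that any sub-solution $w$ of \eqref{eq:cell} at level $\lambda$ is globally Lipschitz in $x$, with a constant $L_\lambda$ depending only on $\lambda$ and $H$, and satisfies $\osc_{t\in[0,1]}w(t,x)\le C_\lambda$ uniformly in $x$. The spatial Lipschitz bound would come from the uniform coercivity \textbf{(A3)}; the delicate point is that $w_t$ satisfies only the one-sided bound $w_t\le\lambda-\inf H$ in the viscosity sense, so one uses that $w-C_\lambda t$ is then non-increasing in $t$ together with the $1$-periodicity to conclude that the increase of $w$ over one period, hence its full $t$-oscillation, is controlled; this in turn feeds back into the coercivity estimate to yield the uniform $x$-Lipschitz bound. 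These two bounds are precisely what is needed to connect $w$ to the cell problems of Proposition~\ref{prop:quasi-conv}.

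For the lower bound, fix $\lambda\in E$ with a sub-solution $w$ as above and prove $\lambda\ge\min\bar H_R$, the bound $\lambda\ge\min\bar H_L$ being symmetric. Put $w_k(t,x):=w(t,x+k)-w(0,k)$ for $k\in\N$; by the regularity step the $w_k$ are locally uniformly bounded and equi-Lipschitz in $x$, and each is a sub-solution of $u_t+H(t,x+k,u_x)=\lambda$. Since $H(t,x+k,p)\to H_R(t,x,p)$ locally uniformly by \textbf{(A5)}, the half-relaxed upper limit $\bar w(t,x):=\limsup_{k\to+\infty,\,(s,y)\to(t,x)}w_k(s,y)$ is, by the Barles--Perthame stability of viscosity sub-solutions, a $1$-periodic in $t$, $L_\lambda$-Lipschitz in $x$ sub-solution of $\bar w_t+H_R(t,x,\bar w_x)=\lambda$ on $\R\times\R$. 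I would then turn $\bar w$ into a genuine competitor for a cell problem of Proposition~\ref{prop:quasi-conv}: under \textbf{(B-ii)} one averages the $\Z$-translates $x\mapsto\bar w(t,x+j)$, $j=0,\dots,N-1$ (legitimate because, by $\Z$-periodicity of $H_R$ in $x$ and its convexity, they all satisfy the same inequality and averaging preserves it), and passes to a subsequential limit to obtain, for a suitable $p\in[-L_\lambda,L_\lambda]$ arising as a limit of mean slopes $N^{-1}(\bar w(0,N)-\bar w(0,0))$, a function $v$ which is $\Z^2$-periodic and satisfies $v_t+H_R(t,x,p+v_x)\le\lambda$; under \textbf{(B-i)}, where $H_R$ is time-independent, one instead passes to $m(x):=\sup_t\bar w(t,x)$, which is a Lipschitz sub-solution on $\R$ of the stationary cell equation $H_R(x,m')=\lambda$. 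In either case, the existence of such a periodic (resp.\ global) sub-solution forces $\lambda\ge\bar H_R(p)\ge\min\bar H_R$ (resp.\ $\lambda\ge\min\bar H_R$) by the characterization of the effective Hamiltonian underlying Proposition~\ref{prop:quasi-conv}: no $\Z^2$-periodic sub-solution of \eqref{eq:cell-alpha} exists below the level $\bar H_\alpha(p)$, equivalently $\min\bar H_\alpha$ is the critical value of the associated stationary problem. Hence $\lambda\ge A_0$ for every $\lambda\in E$, which gives both boundedness from below and $\bar A\ge A_0$.

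I expect two main obstacles. The first is the a priori spatial Lipschitz regularity of sub-solutions of \eqref{eq:cell}: coercivity alone does not control $w_t$ from below, so one must genuinely exploit the time-periodicity, as sketched above. The second is the passage to the limit in the translates and the construction of a periodic or stationary competitor: since the $w_k$ need not be equicontinuous in $t$, one has to work with half-relaxed limits rather than uniform convergence, and verify that the resulting object is $1$-periodic in $t$ and periodic in $x$ up to a linear term, so that Proposition~\ref{prop:quasi-conv} and the critical-value characterization apply --- the convexity/quasi-convexity dichotomy \textbf{(B-i)}/\textbf{(B-ii)} enters precisely at this point. Once these are settled, the contradiction with the assumption $\lambda<\min\bar H_R$ is the standard comparison between a periodic sub-solution and the exact corrector of the corresponding cell problem.
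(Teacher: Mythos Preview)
Your proposal is correct and takes a genuinely different, more self-contained route than the paper. In the paper, the proof of Theorem~\ref{thm:bar-A} is embedded in the construction of the global corrector via truncated cell problems (Propositions~\ref{prop:cor-trunc}--\ref{prop:bar-A} and Theorem~\ref{thm:corrector}): $\bar A$ is first \emph{defined} as $\lim_{\rho\to\infty}\lambda_\rho$, non-emptiness of $E$ follows because the global corrector is itself a sub-solution at level $\bar A$, the inequality $\bar A\ge A_0$ drops out of the rescaled corrector satisfying $\bar H_\alpha(W_x)=\bar A$, and finally $\bar A=\inf E$ is shown by contradiction, comparing any hypothetical sub-solution $w_\lambda$ at level $\lambda<\bar A$ with the global corrector $w$ via the slope estimates~\eqref{eq:w-lb-slope}. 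By contrast, you bypass the truncated-problem machinery entirely: $w\equiv 0$ gives non-emptiness at once, and you show directly that every $\lambda\in E$ dominates $A_0$ by pushing the sub-solution off to infinity and invoking the cell problems for $H_R,H_L$. Your argument is more elementary for the bare statement, while the paper's route simultaneously produces the global corrector (indispensable for Theorem~\ref{thm:conv}) and the stronger fact that $\inf E$ is actually attained.

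One simplification worth noting: your translation-then-averaging/supremum step, with its separate treatment of \textbf{(B-i)} and \textbf{(B-ii)}, can be replaced by a single rescaling $w^\eps(t,x)=\eps\, w(\eps^{-1}t,\eps^{-1}x)$. Your Lipschitz and time-oscillation bounds give $w^\eps(t,x)=\eps\, m(\eps^{-1}x)+O(\eps)$, so along a subsequence $w^\eps\to W=W(x)$ locally uniformly with $W$ Lipschitz; the perturbed test function method (using the correctors of Proposition~\ref{prop:quasi-conv}, exactly as in the appendix case $\bar x\neq 0$) then yields $\bar H_R(W_x)\le\lambda$ for $x>0$, whence $\lambda\ge\min\bar H_R$ at any point of differentiability of $W$. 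This is precisely the move the paper makes with $w_\lambda$ in Section~\ref{sec:trunc}, although there the resulting growth bounds are fed into a comparison with the corrector rather than read off directly. Going this way removes both obstacles you flag: no averaging of sub-solutions is needed, and the passage to the limit is by genuine local uniform convergence rather than half-relaxed limits.
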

\begin{remark}
We will see below (Theorem~\ref{thm:corrector}) that the infimum is in
fact a minimum: there exists a global corrector which, in
particular, can be rescaled properly.
\end{remark}

We can now define the effective junction condition. 
\begin{definition}[Effective junction condition]\label{defi:F-bar}
The \emph{effective junction function} $F_{\bar A}$ is defined by
\[F_{\bar A}(p_L,p_R):=\max
(\bar A,\bar H_L^+(p_L), \bar H_R^-(p_R))\]
where
\[\bar H_\alpha^-(p)=\left\{\begin{array}{ll}
\bar H_\alpha (p) &\quad \text{if}\quad p< \bar p_\alpha^0,\\
\bar H_\alpha(\bar p_\alpha^0)&\quad \text{if}\quad p\ge  \bar p_\alpha^0
\end{array}\right.
\quad \text{and}\quad 
\bar H_\alpha^+(p)=\left\{\begin{array}{ll}
\bar H_\alpha(\bar p_\alpha^0) &\quad \text{if}\quad p\le \bar p_\alpha^0,\\
\bar H_\alpha (p) &\quad \text{if}\quad p>  \bar p_\alpha^0
\end{array}\right.\]
where $\bar p_\alpha^0 \in \argmin \bar H_\alpha$. 
\end{definition}

\subsubsection*{The convergence result}

Our main result is the following theorem.
\begin{theorem}[Junction condition by homogenization]\label{thm:conv}
Assume \textbf{\upshape (A0)-(A5)} and either \textbf{\upshape (B-i)}
or \textbf{\upshape (B-ii)}. Assume that the initial datum $u_0$ is
Lipschitz continuous and for $\eps>0$, let $u^\eps$ be the solution of
\eqref{eq:hj-eps}-\eqref{eq:ic}.  Then $u^\eps$ converges locally
uniformly to the unique flux-limited solution $u^0$ of
\begin{equation}\label{eq:hj-homog}
\left\{\begin{array}{ll}
u^0_t + \bar H_L(u^0_x)=0, & t>0,x<0,\\
u^0_t + \bar H_R(u^0_x)=0, & t>0,x>0,\\
u^0_t + F_{\bar A}(u^0_x(t,0^-),u^0_x(t,0^+))=0, & t>0,x=0
\end{array}\right.
\end{equation}
submitted to the initial condition \eqref{eq:ic}. 
\end{theorem}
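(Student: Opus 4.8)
The plan is to use the standard homogenization machinery based on Evans' perturbed test function method, adapted to the junction setting of \cite{im}. First I would establish the existence, uniqueness and uniform (in $\eps$) Lipschitz bounds for $u^\eps$, using the coercivity \textbf{(A3)} together with the Lipschitz initial datum $u_0$; this yields equicontinuity in space, and then \textbf{(A2)} together with the equation gives equicontinuity in time, so by Arzel\`a--Ascoli $u^\eps$ converges (along a subsequence, and ultimately fully once the limit is identified) locally uniformly to some function $u^0$. The task is then to show $u^0$ is the flux-limited solution of \eqref{eq:hj-homog}, and conclude by the uniqueness (comparison principle) for flux-limited solutions proved in \cite{im}.

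The identification of $u^0$ away from the junction is the classical part: for any point $(t_0,x_0)$ with $x_0\neq 0$, one uses that $H(t/\eps,(x_0+\eps y)/\eps,\cdot)$ is, for $\eps$ small, a perturbation of $H_L$ (if $x_0<0$) or $H_R$ (if $x_0>0$) by \textbf{(A5)}, and then the time-dependent correctors $v^\alpha$ from Proposition~\ref{prop:quasi-conv} serve as the corrector in the perturbed test function $\phi(t,x)+\eps v^\alpha(t/\eps,x/\eps)$; a by-now-routine argument shows $u^0$ is a viscosity sub- and super-solution of $u^0_t+\bar H_\alpha(u^0_x)=0$. The delicate point is the behaviour at $x=0$. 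Here I would invoke the characterization of flux-limited solutions from \cite{im}: it suffices to check that $u^0$ satisfies the relaxed (``$A$'' or ``$\bar A$'') junction condition in the viscosity sense, which by the results of \cite{im} reduces to testing against a restricted class of test functions of the form $\phi(t)+\bar\phi(x)$ where $\bar\phi$ has prescribed slopes $p_L$ on $x<0$ and $p_R$ on $x>0$ satisfying $\bar H_L^+(p_L)=\bar H_R^-(p_R)$.

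For the super-solution property at the junction, the natural ingredient is the global-in-time corrector $w$ associated with the flux limiter $\bar A$, whose existence (as announced in the remark after Theorem~\ref{thm:bar-A}, i.e. Theorem~\ref{thm:corrector}) guarantees that $E$ has a minimum; rescaling $w$ gives a perturbed test function $\phi(t,x)+\eps w(t/\eps,x/\eps)$ that is admissible near $x=0$ and, combined with the lower bound \eqref{eq::g1} $\bar A\ge A_0$, forces $u^0_t+F_{\bar A}(u^0_x(t,0^-),u^0_x(t,0^+))\ge 0$ there. For the sub-solution property, one argues by contradiction: if $u^0$ violated the junction condition at some $(t_0,0)$, one could use a corrector at a level $\lambda<\bar A$ --- but no subsolution of \eqref{eq:cell} exists below $\bar A=\inf E$, contradiction; alternatively one uses the reduced set of test functions and the fact that, away from the origin, the only constraint is the one coming from $\bar H_\alpha$, together with $\bar A$ being exactly the threshold. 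The initial condition is handled as usual by a barrier argument using the Lipschitz bound on $u_0$ and coercivity, giving $u^0(0,\cdot)=u_0$. Finally, uniqueness of the flux-limited solution \cite{im} upgrades subsequential convergence to full convergence.

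The main obstacle I anticipate is the junction analysis: making rigorous the passage from the oscillating equation to the effective junction condition requires (a) the existence of a \emph{global-in-time} corrector $w$ — not merely periodic in a bounded region — with controlled behaviour as $|x|\to\infty$ matching the sublinear correctors $v^L,v^R$, which is exactly the content of the forthcoming Theorem~\ref{thm:corrector} and is genuinely nontrivial because the corrector must be constructed on the whole line with the correct slopes at infinity; and (b) verifying that the rescaled $w$ can legitimately play the role of perturbed test function in the neighbourhood of the discontinuity point, which uses \textbf{(A5)} to control the difference between $H$ and $H_{L/R}$ far from the origin and the quasi-convexity assumptions \textbf{(B-i)}/\textbf{(B-ii)} to ensure $\bar H_\alpha$ is quasi-convex so that the theory of \cite{im} applies. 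Once these are in place, the perturbed test function computations at the junction are technical but follow the template of \cite{im,fim12}.
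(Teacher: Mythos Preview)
Your overall architecture is right and matches the paper's: barriers plus half-relaxed limits (or, as you prefer, Arzel\`a--Ascoli via equi-Lipschitz bounds), the classical perturbed test function argument for $\bar x\neq 0$ using the periodic correctors $v^\alpha$ from Proposition~\ref{prop:quasi-conv}, and at $\bar x=0$ the reduced class of test functions from \cite{im} together with the global corrector $w$ of Theorem~\ref{thm:corrector-simple}. The paper in fact only proves the barrier $|u^\eps-u_0|\le Ct$ (Lemma~\ref{Barriers}) and works with half-relaxed limits rather than establishing equi-Lipschitz bounds, but this is a minor stylistic difference.

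There is, however, a genuine gap in your junction analysis. You propose to handle the super-solution inequality via the perturbed test function $\phi(t)+w^\eps(t,x)$ and the sub-solution inequality by a separate mechanism (``a corrector at level $\lambda<\bar A$ --- but no sub-solution of \eqref{eq:cell} exists below $\bar A=\inf E$, contradiction''). The latter is a non-sequitur: a failure of the sub-solution inequality for $\bar u$ at $(\bar t,0)$ does not in any evident way manufacture a global $1$-periodic-in-time sub-solution of the cell problem \eqref{eq:cell} at a level strictly below $\bar A$. In the paper the two cases are treated \emph{symmetrically}, both with the \emph{same} corrector $w$ and the same perturbed test function $\varphi^\eps(t,x)=\phi(t)+w^\eps(t,x)$; what distinguishes the two directions is which half of the \emph{two-sided} slope estimate \eqref{eq:W-estim-2} on the rescaled limit $W$ of $w^\eps$ is invoked. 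For the sub-solution property of $\bar u$ one takes the reduced test function $\varphi(t,x)=\phi(t)+\bar p_L x\,1_{\{x<0\}}+\bar p_R x\,1_{\{x>0\}}$ and uses the lower bound $W(x)\ge \bar p_R x\,1_{\{x>0\}}+\bar p_L x\,1_{\{x<0\}}$ to ensure $u^\eps+\kappa_r\le \varphi^\eps$ on $\partial B_r(\bar t,0)$, after which comparison gives the contradiction. For the super-solution property of $\underline u$ one instead takes slopes $\hat p_L,\hat p_R$ and uses the upper bound in \eqref{eq:W-estim-2}. You correctly flagged the need for ``controlled behaviour as $|x|\to\infty$'' of $w$, but what the argument actually consumes is precisely this two-sided pinching of $W$ near the origin after rescaling; that is the content of Theorem~\ref{thm:corrector-simple} and the real technical hinge of the proof, and it is what your sub-solution step is missing.
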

\begin{remark}
We recall that the notion of flux-limited solution for
  (\ref{eq:hj-homog}) is introduced in \cite{im}.
\end{remark}
This theorem asserts in particular that the slopes of
  the limit solution at the origin are characterized by the effective
  flux limiter $\bar A$. Its proof relies on the construction of a
  global ``corrector'', \textit{i.e.} a solution of \eqref{eq:cell},
  which is close to an appropriate $V$-shaped function after
  rescaling. This latter condition is necessary so that the slopes at
  infinity of the corrector fit the expected slopes of the solution of
  the limit problem at the origin. Here is a precise statement.
\begin{theorem}[Existence of a global corrector for the junction]\label{thm:corrector-simple}
Assume \textbf{\upshape (A0)-(A5)} and either \textbf{\upshape (B-i)}
or \textbf{\upshape (B-ii)}.
There exists a solution $w$ of
  \eqref{eq:cell} with $\lambda = \bar A$ such that, the function
\[w^\eps(t,x)=\eps w(\eps^{-1}t,\eps^{-1}x)\]
 converges locally uniformly (along a subsequence $\eps_n\to
0$) towards a function $W=W(x)$ which satisfies $W(0)=0$ and
\begin{equation}\label{eq:W-estim-2}
\hat p_R x 1_{\left\{x>0\right\}} + \hat p_L x
1_{\left\{x<0\right\}}\ge W(x)\ge \bar p_R x
1_{\left\{x>0\right\}} + \bar p_L x 1_{\left\{x<0\right\}}
\end{equation}
where
\begin{align}
\label{def:pR}
\begin{cases} \bar p_R = \min E_R \\ \hat p_R = \max E_R \end{cases}
& \quad \text{with}\quad E_R:= \left\{p\in \R,\quad  \bar H_R^+(p)=\bar
H_R(p)=\bar A\right\} \\
\label{def:pL}
\begin{cases}\bar p_L = \max E_L \\\hat p_L = \min E_L \end{cases}
& \quad  \text{with}\quad E_L:= \left\{p\in \R,\quad  \bar
H_L^-(p)=\bar H_L(p)=\bar A\right\}.
\end{align}
\end{theorem}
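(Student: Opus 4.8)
The plan is to build the corrector $w$ as a limit of correctors $w_\delta$ associated with the approximate cell problem where $\lambda = \bar A$ is replaced by a slightly larger value, or alternatively as a limit of time-periodic solutions of $w_t + H(t,x,w_x) = \bar A + \delta$ truncated on a large ball, and then pass to the limit. Concretely, I would first fix $\delta>0$ and invoke Theorem~\ref{thm:bar-A} (and the discussion of $E$) to obtain a $1$-periodic-in-time sub-solution, then upgrade it to an actual solution $w_\delta$ of \eqref{eq:cell} with $\lambda=\bar A+\delta$ by Perron's method on $\R\times\R$, using the coercivity \textbf{(A3)} to get uniform-in-$\delta$ gradient bounds and the time-modulus assumption \textbf{(A2)} together with \textbf{(A1)} to control the time-oscillation; normalize so that $w_\delta(0,0)=0$. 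The key structural input is that, away from the origin, $w_\delta$ is governed by the periodic Hamiltonians $H_L$ and $H_R$, so by Proposition~\ref{prop:quasi-conv} and comparison with the periodic correctors $v^\alpha$ of \eqref{eq:cell-alpha}, one controls the slopes of $w_\delta$ at $\pm\infty$: the relevant slopes must lie in the sublevel set $\{\bar H_\alpha \le \bar A + \delta\}$ on the appropriate side of $\bar p_\alpha^0$, which as $\delta\to 0$ collapses onto $E_L$ and $E_R$. This is exactly where the quasi-convexity of $\bar H_\alpha$ (hence of the whole machinery) is used, and where \eqref{eq::g1}, i.e. $\bar A \ge A_0$, guarantees these sublevel sets are non-empty.

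Next I would extract, by the uniform Lipschitz bound in $x$ and the equicontinuity in $t$, a locally uniform limit $w_{\delta}\to w$ (along $\delta\to 0$) which is a $1$-periodic-in-time solution of \eqref{eq:cell} with $\lambda = \bar A$ — this also shows $\bar A \in E$ and upgrades the infimum in Theorem~\ref{thm:bar-A} to a minimum, as announced in the remark. Then I pass to the second limit: set $w^\eps(t,x) = \eps\, w(\eps^{-1}t, \eps^{-1}x)$. By the uniform Lipschitz bound in $x$ and the fact that $\osc_t w$ is bounded (so the $t$-dependence disappears after rescaling), $w^\eps$ is relatively compact in $C_{loc}$, and any subsequential limit $W$ is a function of $x$ alone with $W(0)=0$. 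It remains to pin down \eqref{eq:W-estim-2}. The lower bound: comparing $w$ on $\{x>\rho_0\}$ (resp. $\{x<-\rho_0\}$) with the affine-plus-periodic sub/super-solutions $p\,x + v^R(t,x) + C$ built from \eqref{eq:cell-alpha} for slopes $p\in E_R$ (resp. $E_L$), and using that $\bar p_R = \min E_R$, $\bar p_L = \max E_L$ are the extreme admissible slopes, yields after rescaling $W(x) \ge \bar p_R x$ for $x>0$ and $W(x)\ge \bar p_L x$ for $x<0$; the upper bound is symmetric, using $\hat p_R = \max E_R$, $\hat p_L = \min E_L$ and the reversed inequalities, together with the observation that $W$ being a viscosity solution of the constant-coefficient equations $W' \in \partial\{\bar H_\alpha \le \bar A\}$-type condition on each half-line forces its slopes into $E_\alpha$.

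The main obstacle I anticipate is the slope analysis of $w_\delta$ (equivalently of $W$) near $\pm\infty$: showing that the only slopes "seen at infinity" are precisely those in $\{\bar H_R^+ = \bar H_R = \bar A\}$ on the right and $\{\bar H_L^- = \bar H_L = \bar A\}$ on the left, with the correct one-sidedness coming from whether the slope sits left or right of $\bar p_\alpha^0$. This requires combining (i) the convergence $H(t,x+k,\cdot)\to H_\alpha$ from \textbf{(A5)} to justify that far-field behaviour is genuinely periodic, (ii) a careful use of the flux-limited/quasi-convex structure of \eqref{eq:hj-homog} — in particular that a solution of a quasi-convex cell problem at level $\bar A$ cannot have far-field slope strictly between $\bar p_\alpha^0$ and $E_\alpha$ — and (iii) the characterization of $\bar A$ as the infimum of $E$, which prevents the slopes from being strictly "inside" (they must be extremal, giving the $\bar p$ vs.\ $\hat p$ distinction). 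A delicate point is that before rescaling the function $w$ need not be monotone or even close to $V$-shaped on bounded sets; it is only the rescaled limit $W$ that is squeezed between the two $V$-shaped envelopes, so all the slope information must be transported through the two successive limits $\delta\to 0$ and $\eps\to 0$ by stability of viscosity (sub/super-)solutions and by the comparison principle for \eqref{eq:cell-alpha}.
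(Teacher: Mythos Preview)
Your construction of $w$ differs from the paper's: rather than approximating $\bar A$ from above and running Perron on the whole line, the paper builds correctors $w^\rho$ on truncated intervals $[-\rho,\rho]$ with \emph{state-constraint} boundary conditions ($H^-$ at $-\rho$, $H^+$ at $\rho$), obtains a unique eigenvalue $\lambda_\rho$ for each $\rho$, shows $\lambda_\rho \nearrow \bar A$, and passes to the limit $\rho\to\infty$ (Propositions~\ref{prop:cor-trunc}--\ref{prop:bar-A} and Theorem~\ref{thm:corrector}). You mention truncation as an alternative but never specify boundary conditions; the choice of state constraints is not incidental --- it is the mechanism that produces the one-sided slope control. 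Your concrete plan (Perron on $\R\times\R$ at level $\bar A+\delta$) also lacks a global super-solution, which is needed to start Perron; for $\lambda$ near $\bar A$ there is no obvious one.

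The genuine gap is the lower bound in \eqref{eq:W-estim-2}. The upper bound $W_x\le \hat p_R$ on $(0,\infty)$ (and symmetrically on the left) does follow, after rescaling, from the sub-solution inequality $\bar H_R(W_x)\le \bar A$ together with coercivity. But the lower bound $W_x\ge \bar p_R$ requires knowing that $W_x$ sits on the \emph{non-decreasing} branch of $\{\bar H_R=\bar A\}$, and none of your ingredients (i)--(iii) delivers this: the scalar equation $\bar H_R(W_x)=\bar A$ on a half-line admits viscosity solutions with slope on either branch; the minimality of $\bar A$ constrains sub-solutions only; and your comparison of $w$ with $p\,x+v^R(t,x)+C$ on the unbounded set $\{x>\rho_0\}$ has no boundary control at $+\infty$, which is precisely the information you are trying to extract. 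In the paper this selection is forced by Proposition~\ref{prop:slopes-trunc}: on $[\rho_\delta,\rho]$ one compares the super-solution $w^\rho$ (which by construction satisfies the $H^+$-condition at $x=\rho$) against the sub-solution $p_R^\delta\, x+v_R$ with $\bar H_R^+(p_R^\delta)=\lambda_\rho-2\delta$, via the mixed Dirichlet/state-constraint comparison principle of Proposition~\ref{prop:comp}. The asymmetry $H^+\le H$ makes any interior sub-solution automatically satisfy the $H^+$ boundary condition at $\rho$, and this is what singles out slopes on the increasing branch and yields \eqref{eq:estim-slope}. Without this truncation-with-state-constraints device (or an equivalent asymmetry), the slope selection argument does not close.
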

The construction of this global corrector is the reason why
homogenization is referred to as being ``specified''. See also
Section~\ref{subsec:rel} about related results.  As a matter of fact,
we will prove a stronger result, see Theorem~\ref{thm:corrector}.

\subsubsection*{Extension: application to traffic lights}

The techniques developed to prove the Theorem~\ref{thm:conv} allow us
to deal with a different situation inspired from traffic flow
problems. As explained in \cite{imz}, such problems are related to
the study of some Hamilton-Jacobi equations. The problem that we address in
Theorem~\ref{thm:conv-time} below is motivated by its meaningful
application to traffic lights. We aim at figuring out how the fraffic
flow on an ideal (infinite, straight) road is modified by the presence
of a finite number of traffic lights.

We can consider a Hamilton-Jacobi equation whose Hamiltonian does not
depend on $(t,x)$ for $x$ outside a (small) interval of the form
$N_\eps = (b_1 \eps, b_N \eps)$ and is piecewise constant with respect
to $x$ in $(b_1 \eps,b_N \eps)$.  At space discontinuities, junction
conditions are imposed with $\eps$-time periodic flux limiters.  The
limit solution satifies the equation after the ``neighbourhood''
$N_\eps$ disappeared. We will see that the equation keeps memory of
what happened there through a flux limiter at the origin
$x=0$. \medskip

Let us be more precise now.  For $N\ge 1$, (a finite number of)
junction points $-\infty=b_0<b_1<b_2<\dots <b_N<b_{N+1}=+\infty$ and
(a finite number of) times $0=\tau_0<\tau_1<\dots
<\tau_K<1=\tau_{K+1}$, $K \in \N$ are given. For $N\ge 1$ and
$\alpha \in \{0,\dots,N\}$, $\ell_\alpha$ denotes
$b_{\alpha+1}-b_\alpha$. Note that $\ell_\alpha=+\infty$ for
$\alpha=0,N$.

We then consider the solution $u^\eps$ of \eqref{eq:hj-eps} where the
Hamiltonian $H$ satifies the following conditions.
\begin{itemize}
\item[\textbf{(C1)}] The Hamiltonian is given by
\[ H (t,x,p) = 
\begin{cases}\bar H_{\alpha}(p) & \text{ if }  b_{\alpha}
<x < b_{\alpha+1} \\
\max (\bar H_{\alpha-1}^+(p^-),\bar
H_{\alpha}^-(p^+), a_\alpha(t) ) & \text{ if } x=b_\alpha, \alpha
\neq 0.
\end{cases} \]
\item[\textbf{(C2)}] The Hamiltonians
  $\bar{H}_\alpha$, for $\alpha=0,\dots, N$, are continuous, coercive and quasi-convex.
\item[\textbf{(C3)}] The flux limiters 
$a_\alpha$, for  $\alpha=1,\dots,N$ and $i=0,\dots,K$, satisfy 
\[a_\alpha(s+1)=a_\alpha(s) \quad \text{with}\quad a_\alpha(s)=
A_\alpha^i \quad \text{for all}\quad s\in
\left[\tau_i,\tau_{i+1}\right)\]
with
$(A_\alpha^i)_{\alpha=1,\dots,N}^{i=0,\dots,K}$ satisfying
\(A_\alpha^i \ge \max_{\beta=\alpha-1,\alpha}\left(\min \bar
  H_\beta\right).\)
\end{itemize}
\begin{remark}
  The Hamiltonians outside $N_\eps$ are denoted by $\bar H_\alpha$
  instead of $H_\alpha$ in order to emphasize that they do not depend
  on time and space.
\end{remark}
\begin{remark}
In view of the litterature in traffic modeling, the Hamiltonians could
be assumed to be convex. But we prefer to stick to the quasi-convex framework
since it seems to us that it is the natural one (in view of \cite{im}). 
\end{remark}

The equation is supplemented with the following initial condition
\begin{equation}\label{eq:ic-bis}
u^\eps (0,x) = U^\eps_0 (x) \quad \text{ for } x \in \R
\end{equation}
with
\begin{equation}\label{hyp:uo}
 U^\eps_0 \text{ is equi-Lipschitz continuous and } U^\eps_0 \to u_0
\text{ locally uniformly}.
\end{equation}
Then the following convergence result holds true. 
\begin{theorem}[Time homogenization of traffic lights]\label{thm:conv-time}
  Assume \textbf{\upshape (C1)-(C3)} and \eqref{hyp:uo}. Let $u^\eps$ be the solution of
  \eqref{eq:hj-eps}-\eqref{eq:ic-bis} for all $\eps >0$. Then:
\begin{enumerate}[\upshape i)]
\item {\upshape (Homogenization)}
There exists some $\bar A\in \R$ such that $u^\eps$ converges
locally uniformly as $\eps$ tends to zero towards the unique viscosity
solution $u^0$ of \eqref{eq:hj-homog}-\eqref{eq:ic} with
\[\bar H_L:=\bar H_0,\quad \bar H_R:= \bar H_N.\]
\item {\upshape (Qualitative properties of $\bar A$)}
For $\alpha=1,\dots,N$, $\langle a_\alpha \rangle$ denotes $\int_0^1  a_\alpha(s)\ ds$.
The effective limiter $\bar A$ satisfies the following properties.
\begin{itemize}
\item For all $\alpha$,  $\bar A$ is non-increasing w.r.t. $\ell_\alpha$. 
\item For $N=1$, 
\begin{equation}\label{eq:N1}
\bar A = \langle a_1 \rangle. 
\end{equation}
\item For  $N \ge 1$,
\begin{equation}\label{eq:Nge1}
\bar A \ge  \max_{\alpha=1,\dots,N}\quad \langle a_\alpha \rangle .
\end{equation}
\item For $N \ge 2$, there exists a critical distance $d_0 \ge 0$  such that
\begin{equation}\label{eq:critical}
\bar A =  \max_{\alpha=1,\dots,N}\quad \langle a_\alpha \rangle  \quad
\quad \mbox{if}\quad \quad \min_\alpha \ell_\alpha \ge d_0;
\end{equation}
this distance $d_0$ only depends on $\displaystyle
\max_{\alpha=1,\dots,N} \|a_\alpha\|_\infty$, $\displaystyle
\max_{\alpha=1,\dots,N} \langle a_\alpha\rangle$ and the $\bar
H_\alpha$'s.
\item We have
\begin{equation}\label{eq:limit}
\bar A \to \langle \bar a \rangle \quad  \mbox{as}\quad (\ell_1,\dots,\ell_{N-1}) \to (0,\dots,0)
\end{equation}
where $\bar a(\tau)=\max_{\alpha=1,\dots,N} a_\alpha(\tau)$.
\end{itemize}
\end{enumerate}
\end{theorem}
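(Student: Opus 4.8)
The plan for item~i) is to reproduce the scheme of the proof of Theorem~\ref{thm:conv}, with two features specific to \textbf{(C1)}--\textbf{(C3)}. First, the Hamiltonians $\bar H_0$ and $\bar H_N$ used far from the origin are already independent of $(t,x)$, so no averaging is needed there and the perturbed test functions on $\{x<0\}$ and $\{x>0\}$ are simply affine in $x$. Second, the $\eps$-problem is now posed on a network --- the line with the finitely many junction points $b_\alpha\eps$ --- carrying $BV$-in-time flux limiters $a_\alpha(t/\eps)$, so one first records that \eqref{eq:hj-eps}--\eqref{eq:ic-bis} is well posed in the flux-limited framework of \cite{im,imz} (existence by Perron's method, uniqueness by the comparison principle for finitely many junctions and piecewise-constant-in-time limiters). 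Coercivity of the $\bar H_\alpha$'s and \eqref{hyp:uo} give, as usual, that the $u^\eps$ are locally equi-Lipschitz, so that their upper and lower relaxed semi-limits $\bar u,\underline u$ are finite; I would then define $\bar A$ as the infimum of the levels $\lambda$ for which \eqref{eq:cell}, with $H$ given by \textbf{(C1)}, admits a sub-solution, and check as in Theorem~\ref{thm:bar-A} that this set is nonempty and bounded below, with $\bar A\ge\max(\min\bar H_0,\min\bar H_N)$.

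The core of item~i) is the construction of a global corrector at the level $\bar A$, exactly as in Theorems~\ref{thm:corrector-simple}--\ref{thm:corrector}: a solution $w$ of \eqref{eq:cell} (with $H$ from \textbf{(C1)}) with $\lambda=\bar A$ whose rescaling $w^\eps(t,x)=\eps w(\eps^{-1}t,\eps^{-1}x)$ converges, along a subsequence, to a $V$-shaped $W$ with $W(0)=0$ and slopes $\bar p_L,\bar p_R$ at $\mp\infty$. It is built by Perron's method at the critical level; the delicate point --- and the main obstacle of the whole theorem --- is the identification of the slopes at infinity, i.e.\ showing that the rescaled limit is not flatter than the $V$ with slopes in $E_L,E_R$, which is where the quasi-convexity of $\bar H_0,\bar H_N$ and the bound $\bar A\ge\max(\min\bar H_0,\min\bar H_N)$ are used, exactly as in the proof of Theorem~\ref{thm:corrector}. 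Once $w$ is available, the perturbed test function argument ($w$ near $x=0$, affine correctors away from it), together with the half-relaxed limits and the comparison principle of \cite{im}, shows that $\bar u$ is a sub-solution and $\underline u$ a super-solution of \eqref{eq:hj-homog} with $\bar H_L=\bar H_0$, $\bar H_R=\bar H_N$ and flux limiter $\bar A$, whence $\bar u\le\underline u\le\bar u$ and the local uniform convergence $u^\eps\to u^0$.

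For item~ii) I would first prove the lower bound \eqref{eq:Nge1}: fixing $\delta>0$ and a sub-solution $w$ of \eqref{eq:cell} at level $\bar A+\delta$, and testing it at a junction point $x=b_\alpha$ with functions having a very negative left spatial slope and a very positive right spatial slope, the $\bar H_{\alpha-1}^+$ and $\bar H_\alpha^-$ terms get pinned at their minima while, by coercivity, the interior inequalities are violated; the ``interior or junction'' alternative of \cite{im} then forces $t\mapsto w(t,b_\alpha)$ to be a viscosity sub-solution of $\varphi_t+a_\alpha(t)\le\bar A+\delta$, and integrating over a period --- using $1$-periodicity in $t$ --- gives $\langle a_\alpha\rangle\le\bar A+\delta$, hence \eqref{eq:Nge1}. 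Next I would establish the universal upper bound $\bar A\le\langle\bar a\rangle$, which contains \eqref{eq:N1} since $\bar a=a_1$ when $N=1$, by exhibiting the flux-limited sub-solution $w(t,x)=g(t)+\psi(x)$ of \eqref{eq:cell} at level $\langle\bar a\rangle$, where $g(t)=\int_0^t(\langle\bar a\rangle-\bar a(s))\,ds$ is $1$-periodic and $\psi$ is continuous, piecewise affine, with slope $\bar p^0_\beta$ on each region $\{b_\beta<x<b_{\beta+1}\}$: inside each region the equation reads $g'(t)+\min\bar H_\beta\le\langle\bar a\rangle$, which holds because $\bar a(t)\ge\min\bar H_\beta$ by \textbf{(C3)}; at each junction it collapses (again by \textbf{(C3)}) to $g'(t)+a_\alpha(t)\le\langle\bar a\rangle$, which holds since $a_\alpha\le\bar a$; and at the time-jumps of $\bar a$, which produce corners of $g$, the ``interior or junction'' alternative is met on one side, the upward corners being harmless. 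Together with \eqref{eq:Nge1} this gives \eqref{eq:N1}. Finally, monotonicity of $\bar A$ in each $\ell_\alpha$ follows by a gluing argument: from a corrector for a given configuration one produces a sub-solution for the configuration with $\ell_\alpha$ enlarged, at the same level, by inserting inside the region governed by $\bar H_\alpha$ a segment of minimal slope $\bar p^0_\alpha$ --- legitimate because $\bar A\ge\min\bar H_\alpha$, by \eqref{eq:Nge1} and \textbf{(C3)} --- so that $\bar A$ does not increase.

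It remains to treat large and small gaps. In \eqref{eq:critical} the inequality ``$\ge$'' is \eqref{eq:Nge1}, so only $\bar A\le\lambda^\star:=\max_\alpha\langle a_\alpha\rangle$ has to be proven when $\min_\alpha\ell_\alpha$ is large, and I would obtain it by patching the local correctors: near $b_\alpha$ one has, by the case $N=1$, a corrector for the single light $\alpha$ at level $\langle a_\alpha\rangle\le\lambda^\star$, behaving like $g_\alpha(t)+\bar p^0_\alpha(x-b_\alpha)$ just to the right of $b_\alpha$ and like $g_{\alpha+1}(t)+\bar p^0_\alpha(x-b_{\alpha+1})$ just to the left of $b_{\alpha+1}$; since both exhibit the common slope $\bar p^0_\alpha$, the only obstruction to joining them across the region of length $\ell_\alpha$ is the $t$-dependent mismatch $g_\alpha-g_{\alpha+1}$, whose oscillation is controlled by $\|a_\alpha\|_\infty$, $\|a_{\alpha+1}\|_\infty$ and $\lambda^\star$ and which can be absorbed using spatial slopes ranging in $\{\bar H_\alpha\le\lambda^\star\}$ --- a genuine interval around $\bar p^0_\alpha$ --- as soon as $\ell_\alpha$ exceeds a distance $d_0$ depending only on $\max_\alpha\|a_\alpha\|_\infty$, $\max_\alpha\langle a_\alpha\rangle$ and the $\bar H_\alpha$'s; this produces a global sub-solution at level $\lambda^\star$. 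For \eqref{eq:limit}, the upper bound is the universal one $\bar A\le\langle\bar a\rangle$ just obtained, and for the matching lower bound as $(\ell_1,\dots,\ell_{N-1})\to(0,\dots,0)$ one uses that the infimum defining $\bar A$ is attained (Theorem~\ref{thm:corrector}): taking a sequence of global correctors, normalising, and passing to the limit --- they are equi-Lipschitz, hence locally precompact, and $\bar A$ remains in the bounded range $[\lambda^\star,\langle\bar a\rangle]$ --- all junction points merge at the origin and each of the $N$ junction inequalities passes to the limit when tested with steep test functions, so the limit is a sub-solution of a single-junction cell problem with left and right Hamiltonians $\bar H_0,\bar H_N$ and flux limiter $\max_\alpha a_\alpha=\bar a$ at the level $\lim\bar A$; the case $N=1$ applied to that problem then forces $\lim\bar A\ge\langle\bar a\rangle$, closing the argument.
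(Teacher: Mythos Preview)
Your strategy for item~i), the lower bound \eqref{eq:Nge1}, the monotonicity in $\ell_\alpha$, and \eqref{eq:limit} matches the paper's route. Your treatment of the upper bound $\bar A\le\langle\bar a\rangle$ via the explicit sub-solution $w(t,x)=g(t)+\psi(x)$ with $g(t)=\int_0^t(\langle\bar a\rangle-\bar a)\,ds$ and $\psi$ piecewise affine with slopes $\bar p^0_\beta$ is correct and genuinely different from the paper: the paper never writes down this explicit sub-solution. Instead, for \eqref{eq:N1} it obtains $\langle a_1\rangle\ge\bar A$ by a super-solution argument --- it builds $\underline w_R(t,x)=\inf_{h\ge0}(w(t,x+h)-\bar p^0_R h)$ and $\underline w_L$ from the global corrector, checks the super-solution property at the junction, and integrates the resulting inequality $\underline w_t(t,0)+a(t)\ge\bar A$ --- and only later, in the proof of \eqref{eq:limit}, derives the bound $\bar A\le\langle\bar a\rangle$ by gluing the already-constructed $N=1$ corrector for $\bar a$ with affine pieces of slope $\bar p^0_\alpha$. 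Your route is shorter and yields \eqref{eq:N1} and the upper half of \eqref{eq:limit} simultaneously.

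The gap is in \eqref{eq:critical}. Your plan is to patch, across each interval $(b_\alpha,b_{\alpha+1})$, two ``local pieces'' of the form $g_\alpha(t)+\bar p^0_\alpha(x-b_\alpha)$ and $g_{\alpha+1}(t)+\bar p^0_\alpha(x-b_{\alpha+1})$, absorbing the bounded time-dependent mismatch $g_\alpha-g_{\alpha+1}$ by letting the spatial slope range over $\{\bar H_\alpha\le\lambda^\star\}$. But sub-solutions are stable under $\max$, and a $\max$ of affine pieces produces convex profiles; the two pieces you want to join have the \emph{same} $x$-slope $\bar p^0_\alpha$, so no choice of constants makes the $\max$ switch from one to the other across the interval, and tilting one piece to create a crossing pushes its slope above $\bar p^0_\alpha$, spoiling the junction inequality at the far endpoint. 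A convex-interpolation $\theta(x)w_L+(1-\theta(x))w_R$ does transition, but then $w_x=\bar p^0_\alpha+\theta'(x)(g_\alpha-g_{\alpha+1})$ and the sub-solution inequality $\theta g'_\alpha+(1-\theta)g'_{\alpha+1}+\bar H_\alpha(w_x)\le\lambda^\star$ can fail: the first two terms already saturate $\lambda^\star-\min\bar H_\alpha$ at times where $a_\alpha(t)=a_{\alpha+1}(t)=\min\bar H_\alpha$ and $\langle a_\alpha\rangle=\langle a_{\alpha+1}\rangle=\lambda^\star$, leaving no room for the positive excess $\bar H_\alpha(w_x)-\min\bar H_\alpha$ coming from $\theta'\neq0$. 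The paper avoids this obstruction entirely: it argues by contradiction, assumes $\bar A>\lambda^\star$, passes to limiters $\widehat a_\alpha=a_\alpha+\mu_\alpha$ with common average $\lambda^\star$, takes the associated corrector $\widehat w$, and studies the \emph{space super-solution} $M(x)=\inf_t\widehat w(t,x)$; structural properties of $M$ force some junction $b_{\alpha_0}$ with ``increasing-on-both-sides'' slopes, and then an inf-shift construction $\underline{\widehat w}_{\alpha_0,R}(t,x)=\inf_{0\le h\le\ell_{\alpha_0}/4}(\widehat w(t,x+h)-\bar p^0_{\alpha_0}h)$ (the same device as in the $N=1$ super-solution argument) yields the contradiction $\lambda^\star\ge\widehat A$, provided $\ell_{\alpha_0}$ exceeds an explicit bound. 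Your sketch does not supply a working substitute for this step.
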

\begin{remark} \label{rem:meaning} Since the function $a(t)$ is
  piecewise constant, the way $u^\eps$ satisfies \eqref{eq:hj-eps} has
  to be made precise. An $L^1$ theory in time (following for instance
  the approach of \cite{b1,b2}) could probably be developed for such a
  problem, but we will use here a different, elementary approach.  The
  Cauchy problem is understood as the solution of successive Cauchy
  problems. This is the reason why we will first prove a global
  Lipschitz bound on the solution so that there indeed exists such a
  solution.
\end{remark}
\begin{remark}\label{rem::g1}
  Note that the result of Theorem \ref{thm:bar-A} still holds for
  equation \eqref{eq:hj-eps} under Assumptions \textbf{\upshape
    (C1)-(C3)}, with the set $E$ defined for sub-solutions which are
  moreover assumed to be globally Lipschitz (without fixed bound on
  the Lipschitz constant). The reader can check that the proof is
  unchanged.
\end{remark}
\begin{remark}
  It is somewhat easy to get \eqref{eq:N1} when the Hamiltonians $\bar
  H_\alpha$ are convex by using the optimal control interpretation of
  the problem.  In the more general case of quasi-convex Hamiltonians,
  the result still holds true but the proof is more involved.
\end{remark}
\begin{remark}\label{rem::g81}
  We may have $\bar A > \max_{\alpha=1,\dots,N}\langle a_\alpha
  \rangle$.  It is possible to deduce it from \eqref{eq:limit} in the
  case $N=2$ by using the traffic light interpretation of the problem.
  If we have two traffic lights very close to each other (let us say
  that the distance in between is at most the place for only one car),
  and if the common period of the traffic lights are exactly in
  opposite phases (with for instance one minute for the green phase,
  and one minute for the red phase), then the effect of the two
  traffic lights together, gives a very low flux which is much lower
  than the effect of a single traffic light alone (\textit{i.e.} here
  at most one car every two minutes will go through the two traffic
  lights).
\end{remark}

\subsection{Traffic flow interpretation of Theorem~\ref{thm:conv-time}}

  We mentioned above that there are some connections between our
  problem and traffic flows.  

Inequality \eqref{eq:Nge1} has a natural
traffic interpretation, saying that the average limitation on the
traffic flow created by several traffic lights on a single road is
higher or equal to the one created by the traffic light which creates
the highest limitation.  Moreover this average limitation is smaller
if the distances between traffic lights are bigger, as says the
monotonicity of $\bar A$ with respect to the distances $\ell_\alpha$.

Property (\ref{eq:critical}) says that the minimal limitation is reached
if the distances between the traffic lights are bigger than a critical
distance $d_0$. The proof of this result is quite involved and is
reflected in the fact that the bounds that we have on $d_0$ are not
continuous on the data ($\displaystyle \max_{\alpha=1,\dots,N}
\|a_\alpha\|_\infty$, $\displaystyle \max_{\alpha=1,\dots,N}\langle
a_\alpha \rangle$ and the $\bar H_\alpha$'s).  

Finally property (\ref{eq:limit}) is very natural from the point of
view of traffic, since it corresponds to the case where all the
traffic lights would be at the same position.

\subsection{Related results}
\label{subsec:rel}

Achdou and Tchou \cite{at} studied a singular perturbation problem
which has the same flavor as the one we are looking at in the present
paper. More precisely, they consider the simplest network (a so-called
junction) embedded in a star-shaped domain. They prove that the value
function of an infinite horizon control problem converges, as the
star-shaped domain ``shrinks'' to the junction, to the value function
of a control problem posed on the junction. We borrow from them the
idea of studying the cell problem on truncated domains with state
constraints. We provide a different approach, which is also in some
sense more general because it can be applied to problems outside the
framework of optimal control theory. Our approach relies in an
essential way on the general theory developed in \cite{im}.

The general theme of Lions's 2013-2014 lectures at Coll\`ege de France
\cite{lions} is ``Elliptic or parabolic equations and specified
homogenization''. As far as first order Hamilton-Jacobi equations are
concerned, the term ``specified homogenization'' refers to the problem
of constructing correctors to cell problems associated with
Hamiltonians that are typically the sum of a periodic one $H$ and a
compactly supported function $f$ depending only on $x$, say. Lions
exhibits sufficient conditions on $f$ such that the effective
Hamilton-Jacobi equation is not perturbed. In terms of flux limiters
\cite{im}, it corresponds to look for sufficient conditions such that
the effective flux limiter $\bar A$ given by Theorem~\ref{thm:bar-A} is (less than or) equal to $A_0=
\min H$.

Barles, Briani and Chasseigne \cite[Theorem 6.1]{bbc} considered the
case 
\[ H(x,p ) = \varphi \left(\frac{x}\eps\right) H_R (p) +
 \left(1-\varphi\left(\frac{x}\eps\right)\right) H_L (p)\]
for some continuous increasing function $\varphi: \R \to \R$ such that
\[ \lim_{s\to -\infty} \varphi (s)=0 \quad \text{ and } \quad \lim_{s\to
  +\infty} \varphi (s) =1.\] They prove that $u^\eps$ converges
towards a value function denoted by $U^-$, that they characterize as
the solution to a particular optimal control problem. It is proved in
\cite{im} that $U^-$ is the solution of \eqref{eq:hj-homog} with $\bar
H_\alpha = H_\alpha$ and $\bar A$ replaced with $A_I^+= \max(A_0,A^*)$
with
\[A_0 = \max (\min H_R,\min H_L) \quad \text{ and } \quad A^*  =
\max_{q \in [\min(p_R^0,p_L^0),\max(p_R^0,p_L^0)]} (\min (H_R(q),H_L(q))). \]

In \cite{gh}, Giga and Hamamuki develop a theory which allows in
particular to prove existence and uniqueness for the following
Hamilton-Jacobi equation (changing $u$ in $-u$) in $\R^d$,
\[\begin{cases} \partial_t u + |\nabla u | = 0 & \text{ for } x \neq 0 \\
 \partial_t u + |\nabla u | + c = 0 & \text{ at } x =0.\end{cases}\]
The solutions of \cite{gh} are constructed as limits of the following equation
\[ \partial_t u^\eps + |\nabla u^\eps | + c (1-|x|/\eps)^+ = 0. \]
In the monodimensional case ($d=1$), Theorem~\ref{thm:conv} implies that
$u^\eps$ converges towards 
\[\begin{cases} \partial_t u + |\nabla u | = 0 & \text{ for } x \neq 0 \\
 \partial_t u + \max(A,|\nabla u |) = 0 & \text{ at } x =0
\end{cases}\] for some $A \in \R$.  In view of
Theorem~\ref{thm:bar-A}, it is not difficult to prove that $A= \max
(0,c)$. The Hamiltonian $\max(c,|\nabla u |)$ is identified in
\cite{gh} and is referred to as the \emph{relaxed} one.

It is known that homogenization of Hamilton-Jacobi equations is
closely related to the study of the large time behaviour of
solutions. In \cite{hamamuki}, the large time behaviour
of Hamilton-Jacobi equations with discontinuous source terms is
discussed in two cases: for compactly supported ones and periodic
ones. Remark that in our setting, we can adress both and even the sum
of a periodic source term and of a compactly supported one. It would
be interesting to adress such a problem in the case of traffic lights. 
In \cite{jy}, the authors study the large time behaviour of the solutions 
of a Hamilton-Jacobi equations with an $x$-periodic Hamiltonian and 
what can be interpreted as a flux-limiter depending periodically in time.

\subsection{Further extensions}\label{ss1}

It is also possible to adress the time homogenization problem of
Theorem~\ref{thm:conv-time} with any finite number of junctions (with
limiter functions $a_\alpha(t)$ piecewise constants -- or continuous
-- and $1$-periodic), either separated with distance of order $O(1)$
or with distance of order $O(\varepsilon)$, or mixing both, and even
on a complicated network.  See also \cite{jy} for other connexions
between Hamilton-Jacobi equations and traffic light problems and
\cite{ags} for green waves modelling.

Note that the method presented in this paper can be readily applied
(without modifying proofs) to the study of homogeneization on a finite
number of branches and not only two branches; the theory developed in
\cite{im} should also be used for the limit problem.  

Similar questions in higher dimensions with point defects of other
co-dimensions will be addressed in future works.

\subsection{Organization of the article}

Section~\ref{sec:conv} is devoted to the proof of the convergence
result (Theorem~\ref{thm:conv}). Section~\ref{sec:homog} is devoted to
the construction of correctors far from the junction point
(Proposition~\ref{prop:quasi-conv}) while the junction case,
\textit{i.e.} the proof of Theorem~\ref{thm:corrector}, is addressed
in Section~\ref{sec:trunc}. We recall that
Theorem~\ref{thm:corrector-simple} is a straightforward corollary of
this stronger result. The proof of Theorem~\ref{thm:corrector} makes
use of a comparison principle which is expected but not completely
standard. This is the reason why a proof is sketched in Appendix,
together with two other ones that are rather standard but included for
the reader's convenience.

\paragraph{Notation.} A ball centered at $x$ of radius $r$ is denoted
by $B_r(x)$. If $\{u^\eps\}_\eps$ is locally bounded, the upper and
lower relaxed limits are defined as 
\[\begin{cases} 
\displaystyle \limsup_\eps{}^* u^\eps (X) = \limsup_{Y \to X, \eps \to 0} u^\eps
(Y), \\
\displaystyle \liminf_\eps{}_* u^\eps (X) = \liminf_{Y \to X, \eps \to 0} u^\eps
(Y).
\end{cases} \]
In our proofs, constants may change from line to line.


\section{Proof of convergence}
\label{sec:conv}

This section is devoted to the proof of Theorem~\ref{thm:conv}. We
first construct barriers. 
\begin{lemma}[Barriers]\label{Barriers}
There exists a nonnegative constant $C$ such that for any $\varepsilon>0$
\begin{equation}\label{barriers1}
\left|u^\varepsilon(t,x)-u_0(x)\right|\leq Ct\quad\text{for}\quad(t,x)\in(0,T)\times\R\,.
\end{equation}
\end{lemma}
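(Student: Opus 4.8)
The plan is to trap $u^\eps$ between the two affine-in-time barriers $u_0(x)+Ct$ and $u_0(x)-Ct$ and to conclude by the comparison principle for \eqref{eq:hj-eps}--\eqref{eq:ic}. First I would set $L_0:=\mathrm{Lip}(u_0)$ and prove the uniform bound
\[
\bar C:=\sup\left\{\,\left|H(t,x,p)\right| \ :\ (t,x)\in\R^2,\ |p|\le L_0\,\right\}<+\infty .
\]
This is where \textbf{(A1)} and \textbf{(A5)} enter: by time-periodicity one may restrict to $t\in[0,1]$; for $x$ in a bounded set $H$ is bounded on the corresponding compact subset of $\R^3$ by \textbf{(A0)}; and for $|x|$ large one writes $x=x'+k$ with $x'\in[0,1)$, $k\in\Z$, and uses \textbf{(A5)} to compare $H(t,x'+k,p)$ with $H_L(t,x',p)$ (for $k\to-\infty$) or $H_R(t,x',p)$ (for $k\to+\infty$); the Hamiltonians $H_L,H_R$, being uniform limits of continuous functions, are continuous, hence bounded on $[0,1]^2\times[-L_0,L_0]$. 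Note that coercivity \textbf{(A3)} alone would not give this bound, since it controls $H$ only from below and only for $|p|$ large. Put $C:=\max(\bar C,0)$.

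Next I would check that $\bar u(t,x):=u_0(x)+Ct$ is a viscosity super-solution and $\underline u(t,x):=u_0(x)-Ct$ a viscosity sub-solution of \eqref{eq:hj-eps} on $(0,T)\times\R$. If a smooth $\varphi$ touches $\bar u$ from below at an interior point $(t_0,x_0)$, then, freezing $x=x_0$, one gets $\varphi_t(t_0,x_0)=C$, while freezing $t=t_0$ the map $x\mapsto\varphi(t_0,x)-Ct_0$ touches $u_0$ from below at $x_0$, so that $|\varphi_x(t_0,x_0)|\le L_0$ because $u_0$ is $L_0$-Lipschitz. Hence
\[
\varphi_t(t_0,x_0)+H\!\left(\tfrac{t_0}{\eps},\tfrac{x_0}{\eps},\varphi_x(t_0,x_0)\right)\ \ge\ C-\bar C\ \ge\ 0 .
\]
The statement for $\underline u$ is symmetric (using $H\le\bar C$ on the same set). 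Both barriers coincide with $u_0$, hence with $u^\eps(0,\cdot)$, at $t=0$.

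Finally, the comparison principle for \eqref{eq:hj-eps}--\eqref{eq:ic} (classical here, $H$ being continuous and coercive, in a class of functions of at most linear growth, which contains $u^\eps$ as well as $\underline u$ and $\bar u$) gives $\underline u\le u^\eps\le\bar u$ on $(0,T)\times\R$, i.e. $|u^\eps(t,x)-u_0(x)|\le Ct$ with $C\ge 0$ independent of $\eps$, which is exactly \eqref{barriers1}. The same pair of barriers also yields existence of $u^\eps$ via Perron's method if it has not been established beforehand. The only delicate point of the argument is the uniform bound $\bar C<+\infty$: it is precisely here that the structural assumption \textbf{(A5)} (``periodic far from the origin''), and not merely continuity of $H$ on $\R^3$, is used; the remaining steps are routine.
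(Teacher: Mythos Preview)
Your proof is correct and follows essentially the same route as the paper's: define $C=\sup_{(t,x)\in\R^2,\,|p|\le L_0}|H(t,x,p)|$, use \textbf{(A0)}--\textbf{(A1)}--\textbf{(A5)} to see this is finite, check that $u_0(x)\pm Ct$ are barriers, and conclude by comparison. The only differences are cosmetic --- you spell out the viscosity verification for the barriers and the reduction argument behind the finiteness of $C$, whereas the paper states both in one line.
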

\begin{proof}
Let $L_0$ be the Lipschitz constant of the initial datum $u_0$. Taking
\[C=\sup_{\stackrel{(t,x)\in\R\times\R}{|p|\leq
    L_0}}\left|H(t,x,p)\right|<+\infty\] owing to \textbf{(A0)} and
\textbf{(A5)}, the functions $u^\pm(t,x)=u_0(x)\pm Ct$ are a super-
and a sub-solution of (\ref{eq:hj-eps})-(\ref{eq:ic}) respectively and
(\ref{barriers1}) follows via comparison principle.
\end{proof}
We can now prove the convergence theorem. 
\begin{proof}[Proof of Theorem~\ref{thm:conv}]

We classically consider the upper and lower relaxed semi-limits
\[\begin{cases}
\overline u =\displaystyle \limsup_\eps{}^* u^\varepsilon,\\
\underline u =\displaystyle  \liminf_\eps{}_* u^\varepsilon\,.
\end{cases}\] Notice that these functions are well defined because of
Lemma \ref{Barriers}. In order to prove convergence of $u^\varepsilon$
towards $u^0$, it is sufficient to prove that $\overline u$ and
$\underline u$ are a sub- and a super-solution of
(\ref{eq:hj-homog})-(\ref{eq:ic}) respectively. The initial condition
immediately follows from (\ref{barriers1}). We focus our attention on
the sub-solution case since the super-solution one can be handled
similarly.

We first check that 
\begin{equation}\label{eq:wk}
 \overline u (t,0) = \limsup_{(s,y) \to (t,0), y >0} \overline u(s,y) = 
\limsup_{(s,y) \to (t,0), y <0} \overline u(s,y).
\end{equation}
This is a consequence of the stability of such a ``weak continuity''
condition, see \cite{im}. Indeed, it is shown in \cite{im} that 
classical viscosity solution can be viewed as flux-limited one; in
particular, $u^\eps$ solves 
\[ u^\eps_t + H^- \left(\frac{t}\eps,\frac{0}\eps,u^\eps_x (t,0^+) \right) \vee
H^+ \left(\frac{t}\eps,\frac{0}\eps,u^\eps_x (t,0^-)) \right) = 0 \quad \text{ for } t >0.\] 
Since these $\eps$-Hamiltonians are uniformly coercive and $u^\eps$ is continuous,
we conclude that \eqref{eq:wk} holds true. 

Let $\varphi$ be a test function such that 
\begin{equation}\label{eq::g2}
(\overline u-\varphi)(t,x)<(\overline u-\varphi)(\overline
t,\overline x)=0 \quad\forall(t,x)\in B_{\overline r}(\overline
t,\overline x)\setminus\left\{(\overline t,\overline x)\right\}.
\end{equation}
We argue by contradiction by assuming that
\begin{equation}\label{conv2}
  \varphi_t(\overline t,\overline x)+\bar H\left(\bar
x,\varphi_x(\overline t,\overline x)\right)=\theta>0,
\end{equation}
where 
\[\bar H\left(\bar x,\varphi_x(\overline t,\overline x)\right) := \left\{
\begin{array}{ll}
\bar H_R(\varphi_x(\overline t,\overline x)) &\quad \mbox{if}\quad \overline x >0,\\
\bar H_L(\varphi_x(\overline t,\overline x)) &\quad \mbox{if}\quad \overline x <0,\\
F_{\bar A}(\varphi_x(\overline t,0^-),\varphi_x(\overline t,0^+)) &\quad \mbox{if}\quad \overline x =0.
\end{array}\right.\]
We only treat the case where $\overline x=0$ since the case $\overline
x \neq 0$ is somewhat classical. This latter case is detailed in
Appendix for the reader's convenience. Using
\cite[Proposition~2.8]{im}, we may suppose that
\begin{equation}\label{eq::g3}
\varphi(t,x)=\phi(t)+\bar {p}_L x1_{\left\{x<0\right\}}+\bar {p}_R x 1_{\left\{x>0\right\}}
\end{equation}
where $\phi$ is a $C^1$ function defined in $(0,+\infty)$. In this case,
Eq.~\eqref{conv2} becomes
\begin{equation}\label{conv7}
\phi'(\bar t)+F_{\bar A}\left(\bar{p}_L,\bar{p}_R\right)=\phi'(\bar t)+\bar A=\theta>0.
\end{equation}
Let us consider a solution $w$ of the equation
\begin{equation}\label{conv8}
w_t+H(t,x,w_x)=\bar A
\end{equation}
provided by Theorem \ref{thm:corrector-simple}, which is in particular
$1$-periodic with respect to time. We recall that the function $W$ is
the limit of $w^\eps = \eps w (\cdot/\eps)$ as $\eps \to 0$. We claim that,
if $\varepsilon>0$ is small enough, the perturbed test function
$\varphi^\varepsilon(t,x)=\phi(t)+w^\varepsilon(t,x)$ \cite{evans} is a
viscosity super-solution of
\[\varphi^\varepsilon_t+H\left(\frac t\varepsilon, \frac
x\varepsilon,\varphi^\varepsilon_x\right)
=\frac\theta2\quad\text{in}\quad B_r(\overline t,0)\] for some
sufficiently small $r>0$. In order to justify this fact, let $\psi(t,x)$ be a test function
touching  $\varphi^\varepsilon$ from below at $(t_1,x_1)\in
B_r(\overline t,0)$. In this way
\[w\left(\frac{t_1}{\varepsilon},\frac{x_1}{\varepsilon}\right)
=\frac1\varepsilon\left(\psi(t_1,x_1)-\phi(t_1)\right)\]
and
\[w\left(s,y\right) \ge \frac1\varepsilon\left(\psi(\varepsilon
s,\varepsilon y) -\phi(\varepsilon s)\right)\]
for $(s,y)$ in a neighborhood of
$\left(\frac{t_1}{\varepsilon},\frac{x_1}{\varepsilon}\right)$. 
Hence from \eqref{conv7}-\eqref{conv8}
\begin{equation*}
\begin{split}
\psi_t(t_1,x_1)+H\left(\frac {t_1}\varepsilon, \frac
    {x_1}\varepsilon,\psi_x(t_1,x_1)\right)& \ge \bar A+\phi'(t_1)\\
&\geq\bar A+\phi'(\overline t)-\frac\theta2\geq\frac\theta2
\end{split}
\end{equation*}
provided $r$ is small enough. Hence, the claim is proved.

Combining (\ref{eq:W-estim-2}) from Theorem~\ref{thm:corrector-simple} 
with (\ref{eq::g2}) and (\ref{eq::g3}), we can 
fix $\kappa_r>0$ and $\eps>0$ small enough so that   
\[u^\varepsilon+\kappa_r \le \varphi^\eps \quad\text{on}\quad\partial B_r(\overline t,0).\]
By comparison principle the previous inequality holds in
$B_r(\overline t,0)$. Passing to the limit as $\eps \to 0$ and $(t,x)
\to (\bar t,\bar x)$,  we get the following contradiction
\[\overline u(\overline t,0)+\kappa_r\le\varphi(\overline
t,0)=\overline u(\overline t,0).\]
The proof of convergence is now complete.
\end{proof}
\begin{remark}\label{rem::1bis}
For the super-solution property, $\varphi$ in (\ref{eq::g3}) should be
replaced with 
\[\varphi(t,x)=\phi(t)+ \hat{p}_L x 1_{\left\{x<0\right\}} + \hat{p}_R x 1_{\left\{x>0\right\}}.\]
\end{remark}

\section{Homogenized Hamiltonians}
\label{sec:homog}

In order to prove Proposition~\ref{prop:quasi-conv}, we first prove
the following lemma. Even if the proof is standard, we give it in full
details since we will adapt it when constructing global correctors for
the junction. 
\begin{lemma}[Existence of a corrector]\label{lem:exis-base}
There exists $\lambda \in \R$ and a  bounded (discontinuous) viscosity solution of
\eqref{eq:cell-alpha}. 
\end{lemma}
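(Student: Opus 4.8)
The plan is to establish existence of a corrector for the cell problem \eqref{eq:cell-alpha} via the classical \emph{approximate corrector} (ergodic) method: introduce a zeroth-order (discount) regularization, solve it for every $\delta>0$, obtain uniform bounds, and pass to the limit $\delta\to 0$. Fix $\alpha\in\{L,R\}$ and $p\in\R$, and write $\tilde H(t,x,q):=H_\alpha(t,x,p+q)$, which inherits continuity from \textbf{(A0)}, $\Z^2$-periodicity from \textbf{(A5)}, and uniform coercivity from \textbf{(A3)}; under \textbf{(B-i)}/\textbf{(B-ii)} it is moreover quasi-convex (resp.\ convex) in $q$, though that is not needed for bare existence. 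First I would consider, for $\delta>0$, the equation
\[
\delta v^\delta + v^\delta_t + H_\alpha(t,x,p+v^\delta_x) = 0 \qquad \text{in } \R\times\R,
\]
seeking a $\Z^2$-periodic (possibly discontinuous) viscosity solution. Existence for this regularized problem follows from Perron's method: the constants $v^\pm = \pm \delta^{-1}\,C_p$ with $C_p := \sup_{(t,x)}|H_\alpha(t,x,p)|<\infty$ (finite by \textbf{(A0)} and periodicity) are a super- and sub-solution, so Perron yields a solution $v^\delta$ with $\|\delta v^\delta\|_\infty \le C_p$, and periodicity is preserved because the equation and the barriers are $\Z^2$-periodic; I would take the periodic envelope to ensure $v^\delta$ itself is periodic.

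Next I would establish the key uniform estimate: $v^\delta - v^\delta(0,0)$ (equivalently $\osc\, v^\delta$) is bounded independently of $\delta$. The coercivity \textbf{(A3)} gives a \emph{uniform Lipschitz bound in $x$}: if $|q|$ is large enough that $H_\alpha(t,x,p+q)> C_p + \sup|\delta v^\delta| \ge C_p + C_p$, the subsolution inequality $\delta v^\delta + v^\delta_t + H_\alpha(t,x,p+v^\delta_x)\le 0$ cannot hold in the viscosity sense, so $\|v^\delta_x\|_\infty \le L$ for a constant $L=L(p)$ independent of $\delta$; periodicity in $x$ then bounds the spatial oscillation by $L$. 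For the oscillation in time I would use assumption \textbf{(A2)}: it is exactly designed to propagate, controlling $H(t,\cdot)-H(s,\cdot)$ by a modulus of $|t-s|(1+\max(H(s,\cdot),0))$, which combined with the Lipschitz-in-$x$ bound (hence a pointwise bound on $H_\alpha(t,x,p+v^\delta_x)$) yields a uniform modulus of continuity in $t$ for $v^\delta$, again by a standard doubling-of-variables argument comparing $v^\delta(t,x)$ and $v^\delta(s,x)$. Therefore $w^\delta := v^\delta - v^\delta(0,0)$ is equi-bounded and equi-continuous (or at least equi-oscillating).

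Finally I would pass to the limit. By the uniform bound on $\delta v^\delta$, the quantities $-\delta v^\delta(0,0)$ stay in the compact interval $[-C_p,C_p]$, so along a subsequence $\delta_n\to 0$ we have $-\delta_n v^{\delta_n}(0,0)\to \lambda$ for some $\lambda\in\R$, and $w^{\delta_n}\to v^\alpha$ locally uniformly (using Ascoli if we have the time-modulus, or the half-relaxed limits $\overline w,\underline w$ together with the strong uniqueness/comparison of \cite{im} if we only want a discontinuous solution — the statement explicitly allows a discontinuous solution, so the half-relaxed limit route is enough). Writing $\delta_n v^{\delta_n} = \delta_n w^{\delta_n} + \delta_n v^{\delta_n}(0,0)$ and noting $\delta_n w^{\delta_n}\to 0$ uniformly, the stability of viscosity sub/supersolutions lets me pass to the limit in
\[
\delta_n v^{\delta_n} + v^{\delta_n}_t + H_\alpha(t,x,p+v^{\delta_n}_x) = 0
\]
to get $v^\alpha_t + H_\alpha(t,x,p+v^\alpha_x) = \lambda$, with $v^\alpha$ bounded and $\Z^2$-periodic. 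I expect the main obstacle to be the time-oscillation estimate: unlike the spatial one, which is immediate from coercivity, controlling $\osc_t v^\delta$ genuinely requires exploiting the precise structure of \textbf{(A2)} in a doubling argument, and this is the step the authors flag as the reason for presenting the proof ``in full details'' so it can later be adapted to the junction corrector. (Uniqueness of $\lambda$ — the rest of Proposition~\ref{prop:quasi-conv} — is a separate matter handled afterwards via the ergodic/comparison argument and the quasi-convexity hypotheses, and is not part of this lemma.)
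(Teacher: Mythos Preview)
Your overall strategy (discounted approximation, uniform oscillation bounds, half-relaxed limits, then Perron) matches the paper's. The difference is in how you control the time oscillation, and there you make the problem harder than it is. The paper does \emph{not} invoke \textbf{(A2)} in this lemma. Instead it argues as follows: since $H_\alpha$ is uniformly bounded below (continuity plus $\Z^2$-periodicity in $(t,x)$ plus coercivity in $q$), the equation directly gives the one-sided viscosity bound $v^\delta_t\le C$; by comparison, $v^\delta(t+h,x)\le v^\delta(t,x)+Ch$ for all $h\ge0$, and combined with $1$-periodicity in $t$ this forces $|v^\delta(t,x)-m^\delta(x)|\le C$, where $m^\delta(x):=\sup_{t} (v^\delta)^*(t,x)$. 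The spatial Lipschitz bound is obtained on $m^\delta$, not directly on $v^\delta$: at a point where the supremum in $t$ is attained, any test function in $x$ alone touches $v^\delta$ from above with zero time derivative, so coercivity bounds $m^\delta_x$. Your direct claim ``$\|v^\delta_x\|_\infty\le L$'' is not justified as written, because the subsolution inequality still contains $v^\delta_t$, which you have not bounded from below; passing through $m^\delta$ is precisely what kills that term. In short, the ``main obstacle'' you anticipate is in fact the easiest step, and \textbf{(A2)} plays no role here --- it is used only later, in the proof of Lemma~\ref{lem:conv}, via a sup-convolution argument.
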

\begin{remark}\label{rem:time-indep}
If $H_\alpha$ does not depend on $t$, then it is possible to construct
a corrector which does not depend on time either. We leave details to
the reader. 
\end{remark}
\begin{proof}
For any $\delta >0$, it is possible to construct a (possibly discontinuous)
viscosity solution $v^\delta$ of 
\[\begin{cases}
\delta v^\delta +v^\delta_t + H_\alpha (t,x,p+v_x^\delta) = 0 \quad
\text{ in } \R \times \R, \\
v^\delta \text{ is $\Z^2$-periodic}.
\end{cases}\]
First, the comparison principle implies 
\begin{equation}\label{eq:naive} 
|\delta v^\delta | \le C_\alpha
\end{equation}
where 
\[ C_\alpha = \sup_{(t,x)\in \left[0,1\right]^2} |H_\alpha (t,x,p)|.\]
Second, the function 
\[ m^\delta (x) = \sup_{t \in \R} (v^\delta)^* (t,x) \]
is a sub-solution  of 
\[ H_\alpha (t(x),x,p+m^\delta_x ) \le C_\alpha\] 
(for some function $t(x)$).  Assumptions \textbf{(A3)} and
\textbf{(A5)} imply in particular that there exists $C>0$ independent
of $\delta$ such that
\[ |m^\delta_x | \le C\] 
and
\[ v^\delta_t \le C.\]
In particular, the comparison principle implies that for all $t \in
\R$ and $x \in \R$ and $h \ge 0$, 
\[ v^\delta (t+h,x) \le v^\delta (t,x) + Ch.\]
Combining this inequality with the time-periodicity of $v^\delta$
yields 
\[ |v^\delta (t,x) - m^\delta (x) | \le C; \]
in particular,
\begin{equation}\label{eq::g4}
|v^\delta (t,x) - v^\delta (0,0)| \le C.
\end{equation}
Hence, the half relaxed limits 
\[ \bar v = \limsup_{\delta\to 0}{}^*(v^\delta - v^\delta (0,0)) \quad \text{
  and } \quad \underline v = \liminf_{\delta\to 0}{}_*(v^\delta - v^\delta
(0,0)) \]
are finite. Moreover,  \eqref{eq:naive} implies
that $\delta v^\delta (0,0) \to - \lambda$ (at least along a
subsequence). Hence, discontinuous stability of viscosity solutions
implies that $\bar v$ is a $\Z^2$-periodic sub-solution of \eqref{eq:cell-alpha} and
$\underline v$ is a  $\Z^2$-periodic super-solution of the same
equation. Perron's method then allows us to construct a corrector
between $\bar v$ and $\underline v + C$ with $C = \sup (\bar v -
\underline v)$. The proof of the lemma is now complete. 
\end{proof}
The following lemma is completely standard; the proof is given in
Appendix for the reader's convenience. 
\begin{lemma}[Uniqueness of $\lambda$]\label{lem:unique}
The real number $\lambda$ given by Lemma~\ref{lem:exis-base} is
unique. If $\bar H_\alpha(p)$ denotes such a real number, the function
$\bar H_\alpha$ is continuous. 
\end{lemma}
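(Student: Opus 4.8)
The plan is to reduce both assertions to a single comparison-type fact about the cell problem \eqref{eq:cell-alpha}: \emph{if $w_1$ is a bounded $\Z^2$-periodic sub-solution of $\partial_t w_1 + H_\alpha(t,x,p+\partial_x w_1) = \mu_1$ and $w_2$ is a bounded $\Z^2$-periodic super-solution of $\partial_t w_2 + H_\alpha(t,x,p+\partial_x w_2) = \mu_2$, then $\mu_1 \ge \mu_2$.} To prove this, I would observe that $(t,x) \mapsto w_1(t,x) - \mu_1 t$ is a sub-solution, and $(t,x)\mapsto w_2(t,x)-\mu_2 t$ a super-solution, of the evolution equation $\partial_t w + H_\alpha(t,x,p+\partial_x w) = 0$ on $(0,+\infty)\times\R$. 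Since $w_1,w_2$ are bounded, $C_0 := \sup_\R(w_1(0,\cdot)-w_2(0,\cdot)) < +\infty$, and the comparison principle for this evolution equation on $(0,T)\times\R$ (legitimate since, by $\Z^2$-periodicity, the problem is really posed on the compact torus, where \textbf{(A0)} and \textbf{(A3)}, together with \textbf{(A2)} if needed, provide the usual structure conditions) gives
\[ w_1(t,x) - w_2(t,x) \le C_0 + (\mu_1-\mu_2)\,t \qquad \text{for all } t>0,\ x\in\R. \]
If $\mu_1 < \mu_2$, letting $t\to+\infty$ would force the left-hand side to $-\infty$, contradicting $w_1 - w_2 \ge -\|w_1\|_\infty - \|w_2\|_\infty$; hence $\mu_1\ge\mu_2$.

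Uniqueness of $\lambda$ is then immediate: if $v_1,v_2$ are bounded periodic solutions of \eqref{eq:cell-alpha} for constants $\lambda_1,\lambda_2$, then reading $v_1$ as a sub-solution with $\lambda_1$ and $v_2$ as a super-solution with $\lambda_2$ yields $\lambda_1\ge\lambda_2$; exchanging the roles of $v_1$ and $v_2$ gives $\lambda_2\ge\lambda_1$, hence $\lambda_1=\lambda_2=:\bar H_\alpha(p)$.

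For the continuity of $p\mapsto\bar H_\alpha(p)$ I would use approximate correctors. Fix $p$ and let $v_p$ be a bounded periodic corrector at slope $p$ provided by Lemma~\ref{lem:exis-base}; by coercivity \textbf{(A3)} it can be taken with $|\partial_x v_p|\le R$ for some $R=R(p)$ (the standard Lipschitz-in-$x$ estimate for sub/super-solutions of coercive cell problems). Let $\omega_R$ be a modulus of continuity of $H_\alpha$ with respect to its last variable, uniform for $(t,x)$ in the torus and gradients in a fixed large ball — it exists by \textbf{(A0)} and $\Z^2$-periodicity. Then for $|p-p'|\le1$,
\[ \partial_t v_p + H_\alpha(t,x,p'+\partial_x v_p) \le \bar H_\alpha(p) + \omega_R(|p-p'|), \]
so $v_p$ is a bounded periodic sub-solution at slope $p'$ with constant $\bar H_\alpha(p)+\omega_R(|p-p'|)$, and likewise a super-solution with constant $\bar H_\alpha(p)-\omega_R(|p-p'|)$. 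Applying the comparison-type fact above to $v_p$ and to a corrector $v_{p'}$ at slope $p'$, once in each direction, yields $|\bar H_\alpha(p)-\bar H_\alpha(p')|\le\omega_R(|p-p'|)$, which proves continuity. The only genuinely delicate point, and presumably the reason the complete argument is deferred to the appendix, is the comparison principle for the evolution equation $\partial_t w + H_\alpha(t,x,p+\partial_x w)=0$ for merely bounded (a priori only semicontinuous) periodic data; on the compact torus and with a coercive, continuous Hamiltonian this is routine.
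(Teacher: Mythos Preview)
Your uniqueness argument is essentially identical to the paper's: both linearize by passing to $w_i-\mu_i t$ (the paper writes $u^i=v^i+px-\lambda^i t$, which is the same after absorbing the slope), invoke the comparison principle for the evolution equation with bounded periodic data, and let $t\to+\infty$.

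For continuity the two proofs diverge. The paper argues by \emph{stability}: since the oscillation bound \eqref{eq::g4} is uniform for $p$ in a bounded set, one can pass to half-relaxed limits of correctors along $p_n\to p$ and use uniqueness of $\lambda$ to identify $\lim\bar H_\alpha(p_n)=\bar H_\alpha(p)$. Your approach is a direct \emph{perturbation}: you recycle $v_p$ as an approximate sub-/super-solution at slope $p'$ and read off a quantitative modulus $|\bar H_\alpha(p)-\bar H_\alpha(p')|\le\omega_R(|p-p'|)$. Your route gives a sharper statement (an explicit modulus) at the cost of one extra ingredient: the claim that $v_p$ ``can be taken with $|\partial_x v_p|\le R$''. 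Coercivity indeed yields this for the upper envelope $(v_p)^*$ (a sub-solution), but not automatically for the lower envelope, so your super-solution perturbation step is not quite justified as written. The fix is painless: run the perturbation only on the sub-solution side, once for $v_p$ at slope $p'$ and once for $v_{p'}$ at slope $p$, and compare each against the untouched super-solution on the other side. The paper's stability argument sidesteps this entirely, needing only boundedness of the correctors.

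Your closing remark slightly misreads the paper's intent: the proof is relegated to the appendix because it is standard, not because the comparison principle is delicate here (on the torus with \textbf{(A0)}, \textbf{(A2)}, \textbf{(A3)} it is routine, as you note).
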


\begin{lemma}[Coercivity of $\bar H_\alpha$]\label{lem:coercive}
The continuous function $\bar H_\alpha$ is coercive,
\[ \lim_{|p|\to +\infty} \bar H_\alpha (p)= +\infty.\]
\end{lemma}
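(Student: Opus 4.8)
The plan is to establish coercivity by comparing the corrector equation \eqref{eq:cell-alpha} with constant-in-$x$ sub- and super-solutions and exploiting the uniform coercivity \textbf{(A3)} together with the periodicity of the corrector. Fix $p \in \R$ and let $v^\alpha$ be a bounded solution of \eqref{eq:cell-alpha} with $\lambda = \bar H_\alpha(p)$, provided by Lemma~\ref{lem:exis-base}. The key observation is that since $v^\alpha$ is $\Z^2$-periodic, hence bounded, it attains its maximum over $\R \times \R$ at some point $(t_0,x_0)$ (or, in the discontinuous setting, one works with the $\limsup$-regularization and a maximizing sequence). At such a maximum point, the sub-solution inequality gives $H_\alpha(t_0,x_0, p + 0) \le \bar H_\alpha(p)$ in the viscosity sense when testing with the constant function; more precisely, $\varphi \equiv v^\alpha(t_0,x_0)$ touches $v^\alpha$ from above at $(t_0,x_0)$, so $\varphi_t = 0$, $\varphi_x = 0$, and the super-solution property (which $v^\alpha$ satisfies, being a solution) yields $H_\alpha(t_0, x_0, p) \ge \bar H_\alpha(p)$. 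Symmetrically, testing at a minimum point with a constant gives $H_\alpha(t_1,x_1,p) \le \bar H_\alpha(p)$. Hence
\[
\min_{(t,x)} H_\alpha(t,x,p) \;\le\; \bar H_\alpha(p) \;\le\; \max_{(t,x)} H_\alpha(t,x,p),
\]
where by periodicity of $H_\alpha$ the min and max are over $[0,1]^2$.

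From the lower bound $\bar H_\alpha(p) \ge \min_{(t,x)\in[0,1]^2} H_\alpha(t,x,p)$, coercivity follows immediately: by \textbf{(A3)} (uniform coercivity, which by \textbf{(A5)} is inherited by $H_\alpha$), for any $M$ there is $R>0$ such that $H_\alpha(t,x,p) \ge M$ for all $(t,x)$ whenever $|p| \ge R$; therefore $\bar H_\alpha(p) \ge M$ for $|p| \ge R$, which is exactly $\lim_{|p|\to\infty}\bar H_\alpha(p) = +\infty$. One subtlety: I should confirm that \textbf{(A3)}, stated for $H$, transfers to $H_\alpha$; this is immediate from the uniform convergence in \textbf{(A5)}, since $H(t,x+k,p) \to H_\alpha(t,x,p)$ uniformly in $(t,x,p)$, so a uniform coercive lower bound for $H$ passes to the limit. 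Alternatively, under \textbf{(B-ii)} one could invoke the well-known variational formula $\bar H_\alpha(p) = \sup$ of a Lagrangian-type expression, but the direct comparison argument above avoids needing convexity and works under either \textbf{(B-i)} or \textbf{(B-ii)}.

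The main obstacle is purely technical: the corrector $v^\alpha$ produced by Lemma~\ref{lem:exis-base} may be discontinuous, so "evaluating at a maximum point" must be done carefully in the viscosity framework. The clean way is: let $x_* \in \R$ and consider $m^\alpha(x) = \sup_t (v^\alpha)^*(t,x)$, which as in the proof of Lemma~\ref{lem:exis-base} is a (Lipschitz) sub-solution of $H_\alpha(t(x),x, p + m^\alpha_x) \le \bar H_\alpha(p)$; at a maximum point $x_0$ of the periodic function $m^\alpha$ one has $m^\alpha_x = 0$ in the viscosity sense, giving $H_\alpha(t(x_0),x_0,p) \le \bar H_\alpha(p)$ — wait, that is the upper bound direction. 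For the lower bound one instead uses that $v^\alpha$ is also a super-solution: at a minimum point of $\underline m^\alpha(x) = \inf_t (v^\alpha)_*(t,x)$ the constant test function from below yields $H_\alpha(\cdot,x_1,p) \ge \bar H_\alpha(p)$. Either way the relevant inequality $\bar H_\alpha(p) \ge \min_{[0,1]^2} H_\alpha(\cdot,\cdot,p)$ is obtained, and combined with the transfer of \textbf{(A3)} to $H_\alpha$, coercivity of $\bar H_\alpha$ follows. I expect this to be short; the only care needed is the handling of discontinuity and the $t$-dependence, both of which are already dealt with in the machinery of Lemma~\ref{lem:exis-base}.
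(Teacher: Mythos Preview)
Your approach is exactly the paper's: test the corrector with a constant at a point of supremum of $(v^\alpha)^*$ to obtain $H_\alpha(\bar t,\bar x,p)\le \bar H_\alpha(p)$, then invoke the uniform coercivity inherited by $H_\alpha$ from \textbf{(A3)}--\textbf{(A5)}. One correction: at a maximum the constant touches from \emph{above}, so it is the \emph{sub}-solution inequality that gives $H_\alpha\le\bar H_\alpha$ --- your opening clause has this right, but the ``more precisely'' sentence and the minimum-point claim swap the sub/super roles and reverse the inequalities; only the single inequality at the maximum is needed, and the upper bound on $\bar H_\alpha$ is superfluous for coercivity.
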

\begin{proof}
In view of the uniform coercivity in $p$ of $H_\alpha$ with respect to
$(t,x)$ (see \textbf{(A3)}), for any $R>0$ there exists a positive constant $C_R$ such
that
\begin{equation}\label{coercivity1}
|p|\geq C_R\quad\Rightarrow\quad \forall (t,x)\in\R\times\R, \quad H_\alpha(t,x,p)\geq R.
\end{equation}
Let $v^\alpha$ be the discontinuous corrector given by Lemma~\ref{lem:exis-base} and
$(\bar t,\bar x)$ be point of supremum of its upper semi-continuous
envelope $(v^\alpha)^*$. Then we have
$$H_\alpha(\bar t,\bar x,p)\le \bar H_\alpha(p)$$
which implies
\begin{equation}\label{coercivity2}
\bar H_\alpha(p)\geq R\quad\text{for}\quad |p|\geq C_R.
\end{equation}
The proof of the lemma is now complete. 
\end{proof}
We first prove the quasi-convexity of $\bar H_\alpha$ under
assumption~\textbf{(B-ii)}. We prove in fact more: the effective
Hamiltonian is convex in this case. 
\begin{lemma}[Convexity of $\bar H_\alpha$ under
    \textbf{(B-ii)}]\label{lem:conv}
    Assume \textbf{\upshape (A0)-(A5)} and
    \textbf{\upshape(B-ii)}. Then the function $\bar H_\alpha$ is
    convex.
\end{lemma}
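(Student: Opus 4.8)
The plan is to exploit the variational (inf-convolution / representation) characterization of $\bar H_\alpha$ that comes out of the cell problem, and to transfer convexity of $p\mapsto H_\alpha(t,x,p)$ to the effective Hamiltonian via a standard argument on sub-solutions. Concretely, I would first record the elementary fact that $\bar H_\alpha(p)\le\lambda$ if and only if there exists a bounded (Lipschitz in $x$, a.e.\ differentiable) $\Z^2$-periodic sub-solution $v$ of $v_t+H_\alpha(t,x,p+v_x)\le\lambda$; one direction is the corrector itself (Lemma~\ref{lem:exis-base}, Lemma~\ref{lem:unique}), and the other direction follows because a periodic sub-solution forces $\lambda\ge\bar H_\alpha(p)$ by the comparison/ergodic argument already used to prove uniqueness of $\lambda$ (if $\lambda<\bar H_\alpha(p)$ one gets a contradiction by comparing with the additive eigenfunction). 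This ``sub-solution criterion'' is the workhorse.

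Next I would take $p_0,p_1\in\R$, $\tau\in[0,1]$, set $p_\tau=(1-\tau)p_0+\tau p_1$, and let $v_0,v_1$ be bounded $\Z^2$-periodic sub-solutions at levels $\bar H_\alpha(p_0)$ and $\bar H_\alpha(p_1)$ respectively (these exist by Proposition~\ref{prop:quasi-conv} / Lemma~\ref{lem:exis-base}). Define the convex combination $v_\tau:=(1-\tau)v_0+\tau v_1$. I would then check, using assumption \textbf{(B-ii)}, that $v_\tau$ is a $\Z^2$-periodic sub-solution of
\[
(v_\tau)_t+H_\alpha\big(t,x,p_\tau+(v_\tau)_x\big)\le (1-\tau)\bar H_\alpha(p_0)+\tau\bar H_\alpha(p_1).
\]
The point here is that $p_\tau+(v_\tau)_x=(1-\tau)(p_0+(v_0)_x)+\tau(p_1+(v_1)_x)$, so by convexity of $p\mapsto H_\alpha(t,x,p)$,
\[
H_\alpha(t,x,p_\tau+(v_\tau)_x)\le(1-\tau)H_\alpha(t,x,p_0+(v_0)_x)+\tau H_\alpha(t,x,p_1+(v_1)_x),
\]
and the time derivative splits linearly; adding the two sub-solution inequalities gives the claim. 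Once $v_\tau$ is such a sub-solution, the sub-solution criterion yields $\bar H_\alpha(p_\tau)\le(1-\tau)\bar H_\alpha(p_0)+\tau\bar H_\alpha(p_1)$, i.e.\ convexity.

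The one subtlety, and the step I would be most careful about, is making the ``convex combination of viscosity sub-solutions is a viscosity sub-solution of the averaged equation'' argument rigorous at the level of viscosity solutions rather than a.e.\ derivatives: a sum of two viscosity sub-solutions need not be a viscosity sub-solution in general, but it is when the Hamiltonian is convex in $(p,\text{slope})$ — this is the classical fact that for convex Hamiltonians viscosity sub-solutions coincide with a.e.\ (Lipschitz) sub-solutions, so one may work with the a.e.\ derivatives directly. I would therefore first upgrade $v_0,v_1$ to Lipschitz functions (the coercivity estimate $|v^\delta_x|\le C$ from the proof of Lemma~\ref{lem:exis-base} passes to the limit, so the correctors can be taken Lipschitz in $x$, and the bound $v^\delta_t\le C$ together with periodicity controls the time behaviour), invoke the convex-Hamiltonian equivalence to read the sub-solution property as holding a.e., combine pointwise a.e.\ as above, and then re-interpret $v_\tau$ as a viscosity sub-solution. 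I would also note \textbf{(B-ii)} gives convexity for $H_\alpha$ directly (not merely quasi-convexity), which is exactly what is needed for both the combination inequality and the sub-solution/a.e.\ equivalence; quasi-convexity alone would not suffice for this particular lemma, which is why the convex case is isolated.
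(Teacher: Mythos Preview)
Your strategy is the same as the paper's: form the convex combination of two correctors, use convexity of $H_\alpha(\cdot,\cdot,p)$ in $p$ to get a sub-solution at the averaged level, and conclude by comparison. The sub-solution criterion you state is exactly what the paper uses implicitly at the end of its Step~1.

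Where your proposal is thin is precisely the point you flag. The correctors produced by Lemma~\ref{lem:exis-base} are a~priori \emph{discontinuous} (Perron's method), and your claimed Lipschitz-in-time upgrade does not follow from ``$v_t\le C$ plus periodicity'': that combination only gives $-C(1-h)\le v(t+h)-v(t)\le Ch$ for $h\in(0,1)$, which is not a two-sided Lipschitz bound. So the classical ``convex Hamiltonian $\Rightarrow$ viscosity sub-solution $=$ a.e.\ sub-solution'' equivalence is not immediately available. The paper handles this carefully in three layers: (Step~3) replace each corrector by the solution of the Cauchy problem with linear initial data $px$, which is continuous and stays within a bounded distance of the corrector; (Step~2) perform a sup-convolution in time and use \textbf{(A2)} to control the error, obtaining a locally Lipschitz approximation; (Step~1) for locally Lipschitz functions, pass from the a.e.\ inequality to the viscosity inequality by mollification and the convexity of $H_\alpha$ (the argument from \cite[Proposition~5.1]{bcd}). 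Your outline would be complete once you insert one of these regularization devices; invoking the Barron--Jensen type equivalence as a black box requires the Lipschitz regularity you have not yet secured in the time variable.
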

\begin{proof}
For $p,\,q\in\R$, let $v_p$, $v_q$ be  solutions of
(\ref{eq:cell-alpha}) with $\lambda=\bar H_\alpha(p)$ and $\bar
H_\alpha(q)$ respectively. We also set
$$u_p(t,x)=v_p(t,x) + px - t\bar H_\alpha(p)$$
and define similarly $u_q$.

\paragraph{Step 1:  $u_p$ and $u_q$ are locally Lipschitz continuous.}
In this case, we have almost everywhere:
$$\left\{\begin{array}{l}
(u_p)_t + H_\alpha(t,x,(u_p)_x) = 0,\\
(u_q)_t + H_\alpha(t,x,(u_q)_x) = 0.\\
\end{array}\right.$$
For $\mu\in \left[0,1\right]$, let
$$\bar u = \mu u_p + (1-\mu)u_q.$$
By convexity, we get almost everywhere
\begin{equation}\label{eq::g5}
\bar u_t + H_\alpha(t,x,\bar u_x) \le 0.
\end{equation}
We claim that the convexity of $H_\alpha$ (in the gradient variable) implies
that $\bar u$ is a viscosity sub-solution.  To see it, we use an
argument of \cite[Proposition 5.1]{bcd}.  For $P=(t,x)$, we define a
mollifier $\rho_\delta(P)=\delta^{-2}\rho(\delta^{-1}P)$ and set
$$\bar u_\delta=\bar u\star \rho_\delta$$
Then by convexity, we get with $Q=(s,y)$:
$$(\bar u_\delta)_t + H_\alpha(P,(\bar u_\delta)_x) \le \int dQ\
\left\{H_\alpha(P,\bar u_x(Q))-H_\alpha(Q,\bar u_x(Q)\right\}\rho_\delta(P-Q).$$

The fact that $\bar u_x$ is locally bounded and the fact that $H_\alpha$ is
continuous imply that the right hand side goes to zero as $\delta \to
0$. We deduce (by stability of viscosity sub-solutions) that
(\ref{eq::g5}) holds true in the viscosity sense.  Then the comparison
principle implies that
\begin{equation}\label{eq::g10}
\mu \bar H_\alpha(p)+ (1-\mu) \bar H_\alpha(q) \ge \bar H_\alpha(\mu p + (1-\mu)q).
\end{equation}

\paragraph{Step 2: $u_p$ and $u_q$ are continuous.} We
proceed in two (sub)steps. 

\noindent \textsc{Step 2.1: the case of a single function $u$.}  We
first want to show that if $u=u_p$ is continuous and satisfies
(\ref{eq::g5}) almost everywhere, then $u$ is a viscosity
sub-solution. To this end, we will use the structural assumptions
satisfied by the Hamiltonian.  The ones that were useful to prove the
comparison principle will be also useful to prove the result we want.
Indeed, we will revisit the proof of the comparison principle. We also
use the fact that
\begin{equation}\label{eq::g6}
u(t,x)-px + t\bar H_\alpha(p) \quad \mbox{is bounded}.
\end{equation}

For $\nu>0$, we set
\[u^\nu(t,x)=\sup_{s\in \R} \left(u(s,x)-\frac{(t-s)^2}{2\nu}\right)=u(s_\nu,x)-\frac{(t-s_\nu)^2}{2\nu}.\]
As usual, we get from (\ref{eq::g6}) that 
\begin{equation}\label{eq::g8}
\left|t-s_\nu\right|  \le C\sqrt{\nu} \quad \mbox{with}\quad C=C(p,T)
\end{equation}
for $t \in (-T,T)$. In particular $ s_\nu \to t$ locally uniformly. 
If a test function $\varphi$ touches $u^\nu$ from above at some point $(t,x)$, then we have
$\displaystyle \varphi_t(t,x)=-\frac{t-s_\nu}{\nu}$ and
\begin{align}
\varphi_t(t,x)+ H_\alpha(t,x, \varphi_x(t,x)) &
\le H_\alpha(t,x, \varphi_x(t,x))-H_\alpha(s_\nu,x, \varphi_x(t,x))\nonumber \\
& \le \omega(\left|t-s_\nu\right|(1+\max (0, H_\alpha(s_\nu,x,
\varphi_x(t,x))))) \nonumber \\
& \le \omega\left(\frac{(t-s_\nu)^2}{\nu}+ \left|t-s_\nu\right|\right)
\label{eq::g7}
\end{align}
where we have used {\bf (A2)} in the third line.
The right hand side goes to zero as $\nu$ goes to zero since
\[\frac{(t-s_\nu)^2}{\nu} \to 0 \quad \mbox{locally uniformly w.r.t. $(t,x)$}\]
(recall $u$ is continuous). Indeed, this can be checked for $(t,x)$ replaced by $(t_\nu,x_\nu)$
because for any sequence $(t_\nu,s_\nu,x_\nu)\to (t,t,x)$, we have
$$u(t_\nu,x_\nu) \le u^\nu(t_\nu,x_\nu)=u(s_\nu,x_\nu) - \frac{(t_\nu-s_\nu)^2}{2\nu}$$
where the continuity of $u$ implies the result.
For a given $\nu>0$, we see that  (\ref{eq::g8}) and (\ref{eq::g7}) imply that
$$\left|\varphi_t \right|, \left|\varphi_x\right| \le C_{\nu,p}.$$
This implies in particular that $u^\nu$ is Lipschitz continuous, and then
$$u^\nu_t + H(t,x,u^\nu_x)\le o_\nu(1) \quad \mbox{a.e.}$$
where $o_\nu(1)$ is locally uniform with respect to $(t,x)$. \bigskip

\noindent {\sc Step 2.2: application.}
Applying Step 2.1, we get for $z=p,q$
$$(u^\nu_z)_t + H(t,x,(u^\nu_z)_x)\le o_\nu(1) \quad \mbox{a.e.}$$
where $o_\nu(1)$ is locally uniform with respect to $(t,x)$.
Step 1 implies that 
$$\bar u^\nu:= \mu u^\nu_p + (1-\mu)u^\nu_q$$
is a viscosity sub-solution of 
$$(\bar u^\nu)_t + H_\alpha(t,x,(\bar u^\nu)_x)\le o_\nu(1)$$
where $o_\nu(1)$ is locally uniform with respect to $(t,x)$.
In the limit $\nu\to 0$, we recover (by stability of sub-solutions) that $\bar u$ is a viscosity sub-solution, 
\textit{i.e.}  satisfies (\ref{eq::g5}) in the viscosity sense. This gives then the same conclusion as in Step 1.

\paragraph{Step 3: the general case.}
To cover the general case, we simply replace $u_p$ by $\tilde{u}_p$ which is the solution to the Cauchy problem
$$\left\{\begin{array}{l}
(\tilde{u}_p)_t + H_\alpha(t,x,(\tilde{u}_p)_x)=0,\quad \mbox{for}\quad (t,x)\in (0,+\infty)\times \R\\
\tilde{u}_p(0,x)=px,
\end{array}\right.$$
Then $\tilde{u}_p$ is continuous and satisfies $\left|\tilde{u}_p-u_p\right|\le C$.
Proceeding similarly with $\tilde{u}_q$ and using Step 2, we deduce
the desired inequality (\ref{eq::g10}). The proof is now complete.
\end{proof}
We finally prove the quasi-convexity of $\bar  H_\alpha$ under
assumption~\textbf{(B-i)}. 
\begin{lemma}[Quasi-convexity of $\bar H_\alpha$ under
    \textbf{(B-i)}]\label{lem:quasi-conv}
    Assume \textbf{\upshape (A0)-(A5)} and
    \textbf{\upshape(B-i)}. Then the function $\bar H_\alpha$ is
    quasi-convex.
\end{lemma}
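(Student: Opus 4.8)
The plan is to prove quasi-convexity of $\bar H_\alpha$ by showing that each sublevel set $\{p\in\R:\bar H_\alpha(p)\le\lambda\}$ is an interval, which is precisely the definition recalled after Proposition~\ref{prop:quasi-conv}. Coercivity of $\bar H_\alpha$ is already available (Lemma~\ref{lem:coercive}), and under \textbf{(B-i)} the Hamiltonian $H_\alpha$ does not depend on time, so by Remark~\ref{rem:time-indep} we may work with time-independent correctors: for every $p$ there is a bounded $\Z$-periodic viscosity solution $v_p=v_p(x)$ of $H_\alpha(x,p+(v_p)_x)=\bar H_\alpha(p)$. Since $H_\alpha$ is coercive in the gradient variable, $v_p$ is Lipschitz continuous, hence differentiable almost everywhere with $H_\alpha(x,p+(v_p)_x(x))\le\bar H_\alpha(p)$ for a.e.\ $x$.

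Now fix $\lambda\in\R$, pick $p_1,p_2$ with $\bar H_\alpha(p_i)\le\lambda$ and $\mu\in(0,1)$, and set $p_\mu=\mu p_1+(1-\mu)p_2$ and $v=\mu v_{p_1}+(1-\mu)v_{p_2}$, which is $\Z$-periodic and Lipschitz. For a.e.\ $x$ we have
\[
p_\mu+v_x(x)=\mu\big(p_1+(v_{p_1})_x(x)\big)+(1-\mu)\big(p_2+(v_{p_2})_x(x)\big),
\]
so the quasi-convexity of $p\mapsto H_\alpha(x,p)$ (Assumption~\textbf{(B-i)}) gives
\[
H_\alpha\big(x,p_\mu+v_x(x)\big)\le\max_{i=1,2}H_\alpha\big(x,p_i+(v_{p_i})_x(x)\big)\le\lambda\qquad\text{for a.e. }x.
\]
Invoking the fact that, for coercive quasi-convex Hamiltonians, a locally Lipschitz function satisfying the Hamilton--Jacobi inequality almost everywhere is a viscosity sub-solution (see \cite{im}), $v$ is a $\Z$-periodic viscosity sub-solution of $H_\alpha(x,p_\mu+v_x)=\lambda$. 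Comparing it with the corrector $v_{p_\mu}$ exactly as in the proof of Lemma~\ref{lem:unique} — or, equivalently, letting $t\to+\infty$ in the comparison between $v(x)-\lambda t$ and $v_{p_\mu}(x)-\bar H_\alpha(p_\mu)t$ for the associated evolution equation — yields $\bar H_\alpha(p_\mu)\le\lambda$. Hence $\{\bar H_\alpha\le\lambda\}$ is convex for every $\lambda$, which is the claim.

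The only non-elementary point, and the step I expect to require the most care, is the identification of almost-everywhere sub-solutions with viscosity sub-solutions under mere quasi-convexity: the mollification argument used in the convex case (Step~1 of Lemma~\ref{lem:conv}) rests on Jensen's inequality and does not carry over. To keep the argument self-contained one can instead exploit the one-dimensional periodic structure: for $\lambda>\max_x H_\alpha(x,p_\alpha^0)$ the set $\{q:H_\alpha(x,q)\le\lambda\}$ is a non-degenerate, $1$-periodic interval $[\pi_\lambda^-(x),\pi_\lambda^+(x)]$, and a $\Z$-periodic Lipschitz sub-solution of $H_\alpha(x,p+v_x)=\lambda$ exists if and only if $\int_0^1\pi_\lambda^-(x)\,dx\le p\le\int_0^1\pi_\lambda^+(x)\,dx$ (construct $v_x$ as a measurable selection between $\pi_\lambda^--p$ and $\pi_\lambda^+-p$ with zero mean); this gives $\{\bar H_\alpha\le\lambda\}=\big[\int_0^1\pi_\lambda^-,\int_0^1\pi_\lambda^+\big]$ for such $\lambda$, while $\{\bar H_\alpha\le\lambda\}=\emptyset$ for $\lambda<\max_x H_\alpha(x,p_\alpha^0)$ and the borderline value is recovered by a nested-intersection argument. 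Either route completes the proof.
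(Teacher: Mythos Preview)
Your argument is correct, but it takes a route different from the paper's. The paper does not work directly with the quasi-convex structure; instead it \emph{convexifies}: under the extra smoothness hypotheses \eqref{eq::g11} it builds an increasing convex $C^2$ function $\gamma$ so that $\gamma\circ H_\alpha$ is strictly convex in $p$, observes that $\bar H_\alpha=\gamma^{-1}\circ\overline{\gamma\circ H_\alpha}$, and then invokes Lemma~\ref{lem:conv} (the convex case) to conclude that $\overline{\gamma\circ H_\alpha}$ is convex, hence $\bar H_\alpha$ is quasi-convex. The general case is recovered by approximating $H_\alpha$ by $C^2$ Hamiltonians $H_\alpha^\varepsilon$ satisfying \eqref{eq::g11} and passing to the limit in the effective Hamiltonian.

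Your approach is more direct and stays in the quasi-convex world. Two remarks. First, the citation of \cite{im} for ``a.e.\ sub-solution $\Rightarrow$ viscosity sub-solution under quasi-convexity'' is loose; the statement you need is, however, genuinely easy in one space dimension: if $v$ is Lipschitz with $p+v_x(x)\in[\pi_\lambda^-(x),\pi_\lambda^+(x)]$ a.e.\ and $\pi_\lambda^\pm$ are continuous, then for any $x_0$ one has $D^+v(x_0)\subset[\pi_\lambda^-(x_0)-p,\pi_\lambda^+(x_0)-p]$ (both endpoints of $D^+v(x_0)$ are limits of averages of $v_x$ over shrinking intervals), which is exactly the viscosity sub-solution inequality. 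So your first route is fine once this one-line justification is inserted. Second, your alternative route via the explicit description $\{\bar H_\alpha\le\lambda\}=[\int_0^1\pi_\lambda^-,\int_0^1\pi_\lambda^+]$ is also correct and pleasantly self-contained; note that the reverse inclusion uses the corrector $v_p$ itself as an a.e.\ sub-solution, together with $\int_0^1(v_p)_x=0$.

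What each approach buys: the paper's convexification trick is reusable in higher dimensions and time-dependent settings where the one-dimensional super-differential computation is unavailable, at the cost of a regularisation step and the construction of $\gamma$ via the ODE-type condition \eqref{eq:cns}. Your argument exploits the one-dimensional, time-independent structure granted by \textbf{(B-i)} to avoid both the composition and the approximation, and yields the sharper formula for the sublevel sets as a by-product.
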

\begin{proof}
  We reduce quasi-convexity to convexity by composing with an
  increasing function $\gamma$; notice that such a reduction was already used in optimization and  in partial differential
  equations, see for instance \cite{lions-convexe,ka}.

We first assume that $H_\alpha$ satisfies
\begin{equation}\label{eq::g11}
\left\{\begin{array}{l}
H_\alpha\in C^2,\\
D^2_{pp}H_\alpha(x,p^0_\alpha)>0,\\
D_p H_\alpha(x,p)<0 \quad \mbox{for}\quad p\in (-\infty,p^0_\alpha),\\
D_p H_\alpha(x,p)>0 \quad \mbox{for}\quad p\in (p^0_\alpha, +\infty),\\
H_\alpha(x,p)\to +\infty \quad \mbox{as}\quad |p|\to +\infty \quad
\mbox{uniformly w.r.t. } x\in \R.
\end{array}\right.
\end{equation}
For a function $\gamma$ such that
\[\gamma \quad \text{is convex},\quad \gamma \in C^2(\R) \quad \text{and}\quad \gamma' \ge \delta_0>0\]
we have 
\[D^2_{pp} (\gamma \circ H_\alpha) >0\]
if and only if
\begin{equation}\label{eq:cns}
(\ln \gamma')'(\lambda) > -\frac{D^2_{pp}H_\alpha(x,p)}{(D_p H_\alpha(x,p))^2}\quad \mbox{for}\quad 
p=\pi_\alpha^\pm(x,\lambda)\quad \text{and}\quad \lambda\ge H_\alpha(x,p)
\end{equation}
where $\pi_\alpha^\pm(x,\lambda)$ is the only real number $r$ such that
$\pm r \ge 0$ and $H_\alpha (x,r) = \lambda$. 
Because $D^2_{pp}H_\alpha(x,p^0_\alpha)>0$, we see that the right hand side is negative for
$\lambda$ close enough to $H_\alpha(x,p^0_\alpha)$ and it is indeed possible to
construct such a function $\gamma$. 

In view of Remark~\ref{rem:time-indep}, we can construct a solution of $\delta v^\delta + \gamma\circ H_\alpha(x,p+v^\delta_x)=0$
with $-\delta v^\delta \to \overline{\gamma\circ H_\alpha}(p)$ as $\delta\to 0$, and a solution of
$$\gamma\circ H_\alpha(x,p+v_x)=\overline{\gamma\circ H_\alpha}(p)$$
This shows that
\[ \bar H_\alpha  = \gamma^{-1} \circ \overline{\gamma \circ H_\alpha}.\]
Thanks to Lemmas~\ref{lem:coercive} and \ref{lem:conv}, we know that
$\overline{\gamma \circ H_\alpha}$ is coercive and convex. Hence $\bar
H_\alpha$ is quasi-convex.

If now $H_\alpha$ does not satisfies (\ref{eq::g11}), then for all
$\varepsilon>0$, there exists $H_\alpha^\varepsilon \in C^2$ such that 
\[ \begin{cases} 
(D^2_{pp}H_\alpha^\varepsilon)(x,p^0_\alpha)>0  \\
D_p H_\alpha^\eps(x,p)<0 \quad \mbox{for}\quad p\in (-\infty,p^0_\alpha),\\
D_p H_\alpha^\eps(x,p)>0 \quad \mbox{for}\quad p\in (p^0_\alpha, +\infty),\\
|H_\alpha^\varepsilon - H_\alpha| < \varepsilon.
\end{cases}\]
Then we can argue as in the proof of continuity of $\bar H_\alpha$
and deduce that 
\[ \bar H_\alpha (p ) = \lim_{\varepsilon \to 0} \bar H_\alpha^\varepsilon
(p). \]
Moreover, the previous case implies that $\bar H_\alpha^\varepsilon$ is
quasi-convex. Hence, so is $\bar H_\alpha$. The proof of the lemma is
now complete.
\end{proof}
\begin{proof}[Proof of Proposition~\ref{prop:quasi-conv}]
Combine Lemmas~\ref{lem:exis-base}, \ref{lem:unique},
\ref{lem:coercive}, \ref{lem:conv} and \ref{lem:quasi-conv}.
\end{proof}

\section{Truncated cell problems}
\label{sec:trunc}

We consider the following problem: find $\lambda_\rho\in \R$ and $w$
such that
\begin{equation}\label{eq:cell-trunc}
\left\{\begin{array}{ll}
w_t + H(t,x,w_x)=\lambda_\rho, &  (t,x)\in \R\times (-\rho,\rho),\\
w_t + H^-(t,x,w_x)=\lambda_\rho, & (t,x)\in \R\times \left\{-\rho\right\},\\
w_t + H^+(t,x,w_x)=\lambda_\rho, & (t,x)\in \R\times
\left\{\rho\right\}, \medskip\\
w \text{ is $1$-periodic w.r.t. } t.
\end{array}\right.
\end{equation}
Even if our approach is different, we borrow here an idea from
\cite{at} by truncating the domain and by considering correctors in
$[-\rho,\rho]$ with $\rho \to +\infty$.

\subsection{A comparison principle}

\begin{proposition}[Comparison principle for a mixed boundary value problem]\label{prop:comp}
Let  $\rho_2>\rho_1>\rho_0$ and $\lambda\in \R$ and $v$ be a
super-solution of the following boundary value problem
\begin{equation}\label{eq:super-comp}
\left\{\begin{array}{ll}
v_t + H(t,x,v_x) \ge \lambda & \text{ for } (t,x)\in \R\times (\rho_1,\rho_2),\\
v_t + H^+(t,x,v_x) \ge \lambda & \text{ for }(t,x)\in \R\times \left\{\rho_2\right\},\\
v(t,x) \ge  U_0(t) & \text{ for }(t,x)\in \R\times
\left\{\rho_1\right\}, \medskip\\
v \text{ is $1$-periodic w.r.t. } t
\end{array}\right.
\end{equation}
where $U_0$ is continuous and for $\eps_0>0$ and $u$ be a sub-solution
of the following one
\begin{equation}\label{eq:sub-comp}
\left\{\begin{array}{ll}
u_t + H(t,x,u_x)\le \lambda -\eps_0 & \text{ for }  (t,x)\in \R\times (\rho_1,\rho_2),\\
u_t + H^+(t,x,u_x)\le \lambda -\eps_0 &\text{ for } (t,x)\in \R\times \left\{\rho_2\right\},\\
u(t,x) \le  U_0(t) & \text{ for } (t,x)\in \R\times
\left\{\rho_1\right\},\medskip\\
u \text{ is $1$-periodic w.r.t. } t.
\end{array}\right.
\end{equation}
Then $u \le v$ in $\R \times [\rho_1,\rho_2]$. 
\end{proposition}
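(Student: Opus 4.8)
\textbf{Proof plan for Proposition~\ref{prop:comp}.}

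The plan is to run a doubling-of-variables argument in the spirit of the standard comparison proof for Hamilton--Jacobi equations, but adapted to the mixed Dirichlet/state-constraint-type boundary data on the strip $\R\times[\rho_1,\rho_2]$. First I would reduce to a bounded situation in time: since $u$ and $v$ are both $1$-periodic in $t$, it suffices to prove $u\le v$ on $[0,1]\times[\rho_1,\rho_2]$, and the periodicity lets us treat the doubling in the $t$-variable on the torus $\R/\Z$ so that no boundary terms in time appear. Suppose for contradiction that $M:=\sup_{\R\times[\rho_1,\rho_2]}(u-v)>0$; because $u$ is bounded above and $v$ bounded below on the compact strip (and both are periodic in $t$), $M$ is finite and attained, or at least approximately attained, at some point with $x\in[\rho_1,\rho_2]$.

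Next I would introduce the classical penalization
\[
\Phi_{\eta,\beta}(t,s,x,y)=u(t,x)-v(s,y)-\frac{|t-s|^2}{2\eta}-\frac{|x-y|^2}{2\beta}-\text{(small localization terms)},
\]
and let $(\bar t,\bar s,\bar x,\bar y)$ be a maximum point. The usual estimates give $|\bar t-\bar s|^2/\eta\to 0$, $|\bar x-\bar y|^2/\beta\to 0$, and $u(\bar t,\bar x)-v(\bar s,\bar y)\to M>0$ as the parameters go to zero. The key structural point is the treatment of the spatial boundary. At $x=\rho_1$ the data is genuine Dirichlet: if $\bar x=\rho_1$ then $u(\bar t,\rho_1)\le U_0(\bar t)$ and, since $\bar y$ is close to $\bar x$, either $\bar y=\rho_1$ too, giving $u(\bar t,\rho_1)-v(\bar s,\rho_1)\le U_0(\bar t)-U_0(\bar s)\to 0$, contradicting $M>0$; or $\bar y>\rho_1$ and the slope $(\bar x-\bar y)/\beta<0$ is available to the super-solution test so that the boundary condition at $\rho_1$ for $v$ is not triggered (the gradient penalization points inward). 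At $x=\rho_2$ the relevant Hamiltonian is $H^+$: here one uses the monotonicity built into $H^+$ (it is nondecreasing in $p$, being the nondecreasing part of the quasi-convex $H$) so that, whether or not $\bar x$ or $\bar y$ equals $\rho_2$, one may always pass to the equation with $H^+$ in the estimate and exploit that $H^+(t,x,p)\le H(t,x,p)$ together with monotonicity to compare $H^+(\bar t,\bar x,(\bar x-\bar y)/\beta)$ with $H^+(\bar s,\bar y,(\bar x-\bar y)/\beta)$.

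With the boundary cases disposed of, in the interior we subtract the two viscosity inequalities at $(\bar t,\bar x)$ and $(\bar s,\bar y)$ with the common momentum $p_\beta=(\bar x-\bar y)/\beta$ (and matching time-derivatives up to $O(|\bar t-\bar s|/\eta)$, which vanishes by periodicity-doubling), and use \textbf{(A2)} to absorb the time-dependence discrepancy $H(\bar t,\cdot,p_\beta)-H(\bar s,\cdot,p_\beta)\le\omega(|\bar t-\bar s|(1+\cdots))$, where coercivity \textbf{(A3)} gives the a priori bound $|p_\beta|\le C$ needed to make the modulus term small. This yields $\eps_0\le o(1)$ as the parameters tend to zero — a contradiction — so $M\le 0$. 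I expect the main obstacle to be the careful bookkeeping at the boundary point $x=\rho_2$: one must verify that the combination ``state-constraint-flavored'' condition at $\rho_2$ for the sub-solution $u$ together with the genuine $H^+$-inequality for the super-solution $v$ can always be reduced, via the monotonicity of $H^+$, to an interior-type comparison, and that the localization terms are chosen so that the maximum cannot escape to $t$-infinity (handled by periodicity) nor create spurious contributions at $\rho_1$. The rest is the textbook argument, so I would only sketch it and refer to \cite{im} for the flux-limited-solution formalism underlying the use of $H^\pm$ at the boundary.
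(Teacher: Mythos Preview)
Your overall structure is sound: doubling of variables, using $1$-periodicity to compactify the time variable, and dismissing $\bar x=\rho_1$ via the continuity of $U_0$ all match the paper's approach. However, your treatment of the boundary $x=\rho_2$ has a genuine gap.

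Consider the case $\bar x=\rho_2>\bar y$, so the common test slope is $p=(\bar x-\bar y)/\beta>0$. The sub-solution inequality at $\rho_2$ reads
\[
\frac{\bar t-\bar s}{\eta} + H^+(\bar t,\rho_2,p)\le\lambda-\eps_0,
\]
while the super-solution inequality at the interior point $\bar y$ reads
\[
\frac{\bar t-\bar s}{\eta} + H(\bar s,\bar y,p)\ge\lambda.
\]
Subtracting gives $H^+(\bar t,\rho_2,p)-H(\bar s,\bar y,p)\le-\eps_0$, and to reach a contradiction you need the left-hand side to be at least $-o(1)$. But if $0<p<p^0(\bar t,\rho_2)$ (nothing forbids $p^0>0$), then $H^+(\bar t,\rho_2,p)=\min_q H(\bar t,\rho_2,q)$, while $H(\bar s,\bar y,p)$ can stay bounded away from this minimum. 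Your inequality $H^+\le H$ goes the wrong direction here: it bounds the left-hand side from \emph{above}, not below. And you cannot replace $H$ by $H^+$ in the super-solution line, since $\bar y$ lies in the interior where only the full $H$-inequality is available. The monotonicity of $H^+$ alone does not close this case. (The mirror case $\bar y=\rho_2>\bar x$ does work by your argument, since then $H\ge H^+$ is applied on the sub-solution side, which is the useful direction.)

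The paper resolves this with two devices you do not invoke. First, it doubles only in time,
\[
M_\nu=\sup_{t,s\in\R,\ x\in[\rho_1,\rho_2]}\Big\{u(t,x)-v(s,x)-\frac{(t-s)^2}{2\nu}\Big\},
\]
obtains a maximizer $(t_\nu,s_\nu,x_\nu)$, and rules out $x_\nu=\rho_1$ as you do. Second, after reducing (as in \cite[Theorem~7.8]{im}) to the normalization $p^0(t_\nu,x_\nu)=0$, it doubles in space near $x_\nu$ using the \emph{vertex test function} $G^\gamma$ of \cite{im} (in the single-edge case $N=1$) in place of the quadratic $(x-y)^2/2$. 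That test function is engineered so that its gradients fall in the range where $H^+$ coincides with $H$, which is precisely what eliminates the bad configuration above. When $x_\nu<\rho_2$ the vertex test function collapses to the standard quadratic and your argument is fine; when $x_\nu=\rho_2$ it is essential.
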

\begin{remark}
A similar result holds true if the Dirichlet condition is imposed at
$x= \rho_2$ and  junction conditions
\begin{align*}
v_t + H^-(t,x,v_x) &\ge \lambda \quad \quad \quad \text{ at } x = \rho_1 \\
u_t + H^-(t,x,u_x)&\le \lambda -\eps_0 \quad \; \text{ at } x = \rho_1
\end{align*}
are imposed at $x = \rho_1$. 
\end{remark}
The proof of Proposition~\ref{prop:comp} is very similar to (in fact
simpler than) the proof of the comparison principle for
Hamilton-Jacobi equations on networks contained in \cite{im}. The main
difference lies in the fact that in our case, $u$ and $v$ are global
in time and the space domain is bounded. A sketch of the proof is
provided in Appendix shedding some light on the main differences.
Here the parameter $\varepsilon_0>0$ in (\ref{eq:sub-comp}) is used in place of
the standard correction term $-\eta/(T-t)$ for a Cauchy problem.

\subsection{Correctors on truncated domains}

\begin{proposition}[Existence and properties of a corrector on a
    truncated domain]\label{prop:cor-trunc} There exists a unique
  $\lambda_\rho \in \R$ such that there exists a solution $w^\rho=w$
  of \eqref{eq:cell-trunc}. Moreover, there exists a constant $C>0$
  independent of $\rho\in (\rho_0,+\infty)$ and a function $m^\rho
  \colon [-\rho,\rho] \to \R$ such that
\begin{equation}\label{eq:trunc-cor}
\left\{\begin{array}{ll}
\left|\lambda_\rho\right| \le C,\\
\left|m^\rho(x)-m^\rho(y)\right|\le C \left|x-y\right| & \text{ for }  x,y\in [-\rho,\rho],\\
\left|w^\rho(t,x)-m^\rho(x)\right|\le C  & \text{ for } (t,x)\in \R\times [-\rho,\rho].
\end{array}\right.
\end{equation}
\end{proposition}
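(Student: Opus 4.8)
The plan is to mimic the construction of a corrector far from the junction (Lemma~\ref{lem:exis-base}), but on the truncated domain $[-\rho,\rho]$, while simultaneously tracking the uniform-in-$\rho$ estimates. First I would introduce the discounted problem: for $\delta>0$, construct a (possibly discontinuous) viscosity solution $v^\delta=v^{\delta,\rho}$ of
\[
\begin{cases}
\delta v^\delta + v^\delta_t + H(t,x,v^\delta_x) = 0 & \text{in } \R\times(-\rho,\rho),\\
\delta v^\delta + v^\delta_t + H^-(t,x,v^\delta_x) = 0 & \text{on } \R\times\{-\rho\},\\
\delta v^\delta + v^\delta_t + H^+(t,x,v^\delta_x) = 0 & \text{on } \R\times\{\rho\},\\
v^\delta \ \text{$1$-periodic in } t,
\end{cases}
\]
by Perron's method, using as barriers $\pm C_0/\delta$ with $C_0=\sup_{[0,1]^2\times[-\rho_0,\rho_0]}|H(t,x,0)|+\sup_{|p|\le L,\,(t,x)}|H^\pm(t,x,p)|$ and a suitable Lipschitz constant $L$ coming from coercivity; the existence and comparison for this mixed boundary value problem follow from Proposition~\ref{prop:comp} (adapted to the discounted equation) exactly as in \cite{im}. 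The comparison principle gives $|\delta v^\delta|\le C_0$, and here the bound on $C_0$ is \emph{independent of $\rho$} because the Hamiltonian is periodic (hence bounded on slopes of size $L$) uniformly in $x$ by \textbf{(A1)}, \textbf{(A3)} and \textbf{(A5)}.

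Next I would extract the gradient and time-oscillation bounds uniformly in $\rho$ and $\delta$. Using coercivity \textbf{(A3)}: the function $m^\delta(x):=\sup_t (v^\delta)^*(t,x)$ is a sub-solution of $H(t(x),x,p+m^\delta_x)\le C_0$ in the interior and of $H^\pm\le C_0$ at the endpoints (for some measurable selection $t(x)$), so $|m^\delta_x|\le C$ with $C$ depending only on $C_0$ and the coercivity modulus, not on $\rho$. For the time regularity, the standard trick (sup-convolution in time combined with \textbf{(A2)}, as in Step~2.1 of Lemma~\ref{lem:conv}) together with comparison gives $v^\delta(t+h,x)\le v^\delta(t,x)+Ch$, and periodicity in $t$ upgrades this to $|v^\delta(t,x)-m^\delta(x)|\le C$. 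Combining with the gradient bound on $m^\delta$ yields $\operatorname{osc}_{t,x}(v^\delta - v^\delta(0,0))\le C(1+\rho)$; to get the $\delta$-independent normalization I would instead fix the value at a reference point and use the two displayed estimates to conclude that $v^\delta - v^\delta(0,0)$ is locally bounded uniformly in $\delta$, while $\delta v^\delta(0,0)\to-\lambda_\rho$ along a subsequence. Discontinuous stability then produces a solution $w^\rho$ of \eqref{eq:cell-trunc} with $\lambda=\lambda_\rho$ satisfying $|\lambda_\rho|\le C_0$, and the estimates \eqref{eq:trunc-cor} pass to the limit (with $m^\rho$ the limit of $m^\delta$, or defined afresh as $\sup_t (w^\rho)^*(t,\cdot)$).

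For uniqueness of $\lambda_\rho$, I would argue exactly as in Lemma~\ref{lem:unique}: if $\lambda<\lambda'$ both admit bounded solutions $w,w'$ of \eqref{eq:cell-trunc}, then $w'$ is a bounded super-solution and $w$ a bounded sub-solution with a strict gap, so Proposition~\ref{prop:comp} applied on, say, $[\rho_1,\rho_2]=[-\rho,0]$ and on $[0,\rho]$ (or directly a global version of the comparison argument on $[-\rho,\rho]$ with the boundary conditions replaced by the junction conditions, as in the Remark after Proposition~\ref{prop:comp}) yields $w+C(\lambda'-\lambda)\le w'$ for arbitrarily large $C$, contradicting boundedness. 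The main obstacle is the bookkeeping needed to guarantee that \emph{all} constants are genuinely independent of $\rho$: one must check that the barriers in Perron's method, the coercivity-based Lipschitz bound, and the sup-convolution time estimate only ever see the Hamiltonian through sup-norms over $[0,1]^2\times\R$ and its coercivity and continuity moduli — which they do, precisely because assumptions \textbf{(A0)}–\textbf{(A5)} are uniform in $x$. A secondary technical point is that the mixed boundary value problem on $[-\rho,\rho]$ (with interior equation, two endpoint junction conditions, and periodicity in $t$) is not literally covered by a Cauchy-problem comparison principle; but Proposition~\ref{prop:comp} and its stated variant supply exactly what is needed, with the $\eps_0$-device replacing the usual $-\eta/(T-t)$ term.
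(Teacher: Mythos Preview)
Your proposal is correct and follows essentially the same approach as the paper: discounted problem on $[-\rho,\rho]$ via Perron's method with constant barriers, uniform gradient bound on $m^\delta=\sup_t(v^\delta)^*$ from coercivity, a one-sided time bound plus time-periodicity to control the time oscillation, then half-relaxed limits and a second Perron step to produce $w^\rho$; uniqueness of $\lambda_\rho$ is dismissed as classical in the paper, exactly along the lines you sketch. The only noticeable difference is that for the time bound the paper argues more directly than you do: from the sub-solution inequality and the uniform lower bound on $H$ (itself a consequence of \textbf{(A0)}, \textbf{(A3)}, \textbf{(A5)}) one gets $w^\delta_t\le C$ in the viscosity sense straight away, with no need for sup-convolution or \textbf{(A2)}; your detour through Step~2.1 of Lemma~\ref{lem:conv} is unnecessary here, though not wrong.
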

\begin{proof}
In order to construct a corrector on the truncated domain, we proceed
classically by considering
\begin{equation}\label{eq:cell-trunc-approx}
\left\{\begin{array}{ll}
\delta w^\delta + w^\delta_t + H(t,x,w^\delta_x)= 0, &  (t,x)\in \R\times (-\rho,\rho)\,,\\
\delta w^\delta +w^\delta_t + H^-(t,x,w^\delta_x)= 0, & (t,x)\in \R\times \left\{-\rho\right\},\\
\delta w^\delta +w^\delta_t + H^+(t,x,w^\delta_x)=0, & (t,x)\in
\R\times \left\{\rho\right\}, \medskip\\
w^\delta \text{ is $1$-periodic w.r.t. } t.
\end{array}\right.
\end{equation}
A discontinuous viscosity solution of \eqref{eq:cell-trunc-approx} is
constructed by Perron's method (in the class of $1$-periodic functions
with respect to time) since $\pm \delta^{-1} C$ are trivial
super-/sub-solutions if $C$ is chosen as follows
\[ C = \sup_{t \in \R,\ x \in \R} |H(t,x,0)|.\]
In particular, the solution $w^\delta$ satisfies by construction
\begin{equation}\label{eq:lli}
 |w^\delta | \le \frac{C}\delta.
\end{equation}
We next consider 
\[ m^\delta (x) = \sup_{t \in \R} (w^\delta)^* (t,x).\]
We remark that the supremum is reached since $w^\delta$ is periodic
with respect to time; we also remark that $m^\delta$ is a viscosity
sub-solution of 
\[ H(t(x),x,m^\delta_x)\le C, \quad  x \in  (-\rho,\rho) \]
(for some function $t(x)$). In view of \textbf{(A3)}, we conclude that $m^\delta$ is globally Lipschitz
continuous and
\begin{equation}
\label{eq:grad-md} 
|m^\delta_x | \le C 
\end{equation}
for some constant $C$ which still only depends on $H$. 
Assumption \textbf{(A3)} also implies that,
\[ w^\delta_t \le C \]
(with $C$ only depending on $H$). In particular, the comparison
principle implies that for all $t \in \R$, $x \in (-\rho,\rho)$ and $h
\ge 0$, 
\[ w^\delta (t+h,x) \le w^\delta (t,x) + Ch.\]
Combining this information with the periodicity of $w^\delta$ with
respect to $t$, we conclude that for $t \in \R$ and $x \in (-\rho,\rho)$,
\[ |w^\delta(t,x) - m^\delta (x)| \le C.\]
In particular,
\[ |w^\delta (t,x) - w^\delta (0,0)| \le C.\]
We then consider 
\[ \overline w = \limsup_\delta{}^* (w^\delta - w^\delta (0,0)) \quad 
\text{ and } \quad \underline w = \liminf_\delta{}_* (w^\delta -
w^\delta (0,0)).\]
We next remark that \eqref{eq:lli} and \eqref{eq:grad-md} imply that there exists
$\delta_n \to 0$ such that 
\begin{align*}
 m^{\delta_n} - m^{\delta_n} (0) \to m^\rho \quad \text{ as } n \to
 +\infty \\
\delta_n w^{\delta_n} (0,0) \to - \lambda_\rho \quad \text{ as } n \to +\infty
\end{align*}
(the first convergence being  locally uniform). In particular, $\lambda$,
$\overline w$, $\underline w$ and $m^\rho$ satisfies
\begin{align*}
| \lambda_\rho | \le C \\
|\overline w - m^\rho | \le C \\
|\underline w -m^\rho | \le C\\
|m^\rho_x | \le C.
\end{align*}
Discontinuous stability of viscosity solutions of Hamilton-Jacobi
equations imply that $\overline w-2C$ and $\underline w$ are respectively
a sub-solution and a super-solution of \eqref{eq:cell-trunc}  and 
\[ \overline w - 2 C \le \underline w .\]
Perron's method is used once again in order to construct a solution
$w^\rho$ of \eqref{eq:cell-trunc} which is $1$-periodic with respect
to time. In view of the previous estimates, $\lambda_\rho$, $m^\rho$
and $w^\rho$ satisfy \eqref{eq:trunc-cor}. Proving the uniqueness of
$\lambda_\rho$ is classical so we skip it. The proof of the
proposition is now complete.
\end{proof}

\begin{proposition}[First definition of the effective flux limiter]\label{prop:bar-A}
The map $\rho\mapsto \lambda_\rho$ is non-decreasing and bounded in $(0,+\infty)$.
In particular, 
\[ \bar A = \lim_{\rho \to +\infty} \lambda_\rho \]
exists and $\bar A \ge \lambda_\rho$ for all $\rho >0$. 
\end{proposition}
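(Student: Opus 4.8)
The plan is to establish the two claims about the map $\rho\mapsto\lambda_\rho$ separately: monotonicity and boundedness. For \textbf{boundedness}, this is already essentially contained in Proposition~\ref{prop:cor-trunc}, which provides a constant $C$ \emph{independent of} $\rho\in(\rho_0,+\infty)$ with $|\lambda_\rho|\le C$; for the remaining small values $\rho\in(0,\rho_0]$ one can either note that the same construction applies verbatim (the assumption $\rho>\rho_0$ in \eqref{eq:cell-trunc} is only needed so that $H^\pm$ make sense at $\pm\rho$, but the bound $C=\sup_{t,x}|H(t,x,0)|$ does not see $\rho$), or simply restrict attention to $\rho>\rho_0$ since that is all we need to pass to the limit $\rho\to+\infty$. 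Hence $\{\lambda_\rho\}$ is bounded.

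For \textbf{monotonicity}, fix $\rho_0<\rho_1<\rho_2$ and let $w^{\rho_1}$, $w^{\rho_2}$ be the correctors given by Proposition~\ref{prop:cor-trunc}, solving \eqref{eq:cell-trunc} with $\lambda_{\rho_1}$ and $\lambda_{\rho_2}$ respectively. The key observation is that $w^{\rho_2}$, restricted to $\R\times[-\rho_1,\rho_1]$, is a super-solution of the interior equation $w_t+H(t,x,w_x)\ge\lambda_{\rho_2}$ on $\R\times(-\rho_1,\rho_1)$, and it satisfies the \emph{relaxed} boundary conditions in the sense of \cite{im}: namely, at $x=\pm\rho_1$ one has $w^{\rho_2}_t+H^\pm(t,x,w^{\rho_2}_x)\ge\lambda_{\rho_2}$ (a classical viscosity super-solution of an interior equation is automatically a super-solution of the corresponding relaxed junction equation at interior points). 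I would then argue by contradiction: suppose $\lambda_{\rho_1}>\lambda_{\rho_2}$, set $\eps_0=\lambda_{\rho_1}-\lambda_{\rho_2}>0$, and observe that $w^{\rho_1}$ is a sub-solution of \eqref{eq:cell-trunc} with $\lambda=\lambda_{\rho_1}=\lambda_{\rho_2}+\eps_0$ on $\R\times[-\rho_1,\rho_1]$. To invoke Proposition~\ref{prop:comp} one needs a Dirichlet datum at $x=\rho_1$ (and symmetrically at $x=-\rho_1$); here one takes $U_0(t):=w^{\rho_2}(t,\rho_1)+K$ for a large constant $K$, uses the uniform bounds \eqref{eq:trunc-cor} (so that both $w^{\rho_1}$ and $w^{\rho_2}$ stay within an $O(1)$ band of Lipschitz functions of $x$), and adds a constant to $w^{\rho_1}$ so that $w^{\rho_1}\le U_0=w^{\rho_2}+K$ at $x=\pm\rho_1$ while $w^{\rho_1}\not\le w^{\rho_2}+K$ somewhere in the interior would still be possible — so instead one picks $K$ \emph{optimally}: let $K^*=\sup_{\R\times[-\rho_1,\rho_1]}(w^{\rho_1}-w^{\rho_2})$, which is finite by \eqref{eq:trunc-cor}, and consider $\tilde w^{\rho_1}=w^{\rho_1}-K^*$. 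Then $\tilde w^{\rho_1}\le w^{\rho_2}$ everywhere with equality attained at some point, and on the boundary $\tilde w^{\rho_1}\le w^{\rho_2}$ provides the Dirichlet ordering needed to apply the comparison principle (Proposition~\ref{prop:comp}, in the symmetric two-sided form indicated in the remark following it), yielding $\tilde w^{\rho_1}<w^{\rho_2}$ \emph{strictly}, contradicting that the supremum $K^*$ is attained. Hence $\lambda_{\rho_1}\le\lambda_{\rho_2}$.

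With monotonicity and boundedness in hand, the limit $\bar A=\lim_{\rho\to+\infty}\lambda_\rho=\sup_{\rho>\rho_0}\lambda_\rho$ exists in $\R$, and $\bar A\ge\lambda_\rho$ for every $\rho$ is immediate from monotonicity. I expect the main obstacle to be the careful bookkeeping in the comparison step: one must check that $w^{\rho_2}$ genuinely qualifies as a super-solution of the mixed boundary value problem on the \emph{smaller} domain $[-\rho_1,\rho_1]$ with the correct relaxed junction conditions at the new artificial boundary points $\pm\rho_1$ (these are interior points for $w^{\rho_2}$, so this is where the equivalence between classical viscosity solutions and flux-limited/relaxed solutions from \cite{im} is used), and that the optimal-constant trick together with Proposition~\ref{prop:comp} (which requires a strict deficit $\eps_0>0$ on the sub-solution side, precisely what the hypothesis $\lambda_{\rho_1}>\lambda_{\rho_2}$ supplies) closes the contradiction. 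Everything else — the a priori bounds, the periodicity in time, the reduction to the two-sided Dirichlet problem — is routine given Propositions~\ref{prop:comp} and \ref{prop:cor-trunc}.
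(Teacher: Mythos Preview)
Your boundedness argument is fine and matches the paper. The monotonicity argument, however, has the roles of sub- and super-solution reversed, and this is not a cosmetic slip: it breaks the proof.

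The boundary operators in \eqref{eq:cell-trunc} satisfy $H^{\pm}\le H$ pointwise (since $H^{+}(p)=\min H$ for $p\le p^0$ and $H^{+}(p)=H(p)$ otherwise, and similarly for $H^{-}$). Consequently, an \emph{interior} viscosity inequality $w_t+H\le\lambda$ at a point $x=\rho_1\in(-\rho_2,\rho_2)$ implies the boundary inequality $w_t+H^{+}\le\lambda$ there, so the restriction of $w^{\rho_2}$ to $[-\rho_1,\rho_1]$ is a \emph{sub}-solution of \eqref{eq:cell-trunc} on the smaller domain with right-hand side $\lambda_{\rho_2}$. This is exactly what the paper invokes via \cite[Proposition~2.19]{im}. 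By contrast, the interior super-solution inequality $w_t+H\ge\lambda$ does \emph{not} yield $w_t+H^{+}\ge\lambda$, because $H^{+}\le H$; so your claim that the restriction of $w^{\rho_2}$ is a super-solution of the state-constraint boundary conditions at $\pm\rho_1$ is false in general.

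Once the roles are put right, the argument is the standard one (and much shorter than your Dirichlet/optimal-constant detour): with $w^{\rho_2}|_{[-\rho_1,\rho_1]}$ a bounded time-periodic sub-solution at level $\lambda_{\rho_2}$ and $w^{\rho_1}$ a bounded time-periodic super-solution at level $\lambda_{\rho_1}$, compare $w^{\rho_2}-\lambda_{\rho_2}t$ and $w^{\rho_1}-\lambda_{\rho_1}t+C$ for the Cauchy problem on $[-\rho_1,\rho_1]$ and divide by $t$ to get $\lambda_{\rho_1}\le\lambda_{\rho_2}$. Note also that Proposition~\ref{prop:comp} only gives $u\le v$, not a strict inequality, so even with correct roles your ``equality at the optimal constant $K^*$ is impossible'' step would not close without further work; the large-time argument avoids this entirely.
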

\begin{proof}
  For $\rho'>\rho>0$, we see that the restriction of $w^{\rho'}$ to
  $\left[-\rho,\rho\right]$ is a sub-solution, as a consequence of
  \cite[Proposition 2.19]{im}.  The boundedness of the map follows
  from Proposition~\ref{prop:cor-trunc}. The proof is thus complete.
\end{proof}
We next prove that we can control $w^\rho$ from below  under
appropriate assumptions on $\bar A$. 
\begin{proposition}[Control of slopes on a
    truncated domain]\label{prop:slopes-trunc}  
    Assume first that $\bar A > \min \bar H_R$. Then for all
    $\delta>0$, there exists $\rho_\delta>0$ and $C_\delta >0$
    (independent on $\rho$) such that for $x \ge
    \rho_\delta$ and $h \ge 0$,
\begin{equation}\label{eq:estim-slope}
 w^\rho (t,x+h) -w^\rho (t,x) \ge (\bar p_R-\delta) h - C_\delta.
\end{equation}
If now we assume that $\bar A > \min \bar H_L$, then  for $x\le
-\rho_\delta$ and $h\ge 0$, 
\begin{equation}\label{eq:estim-slope-bis}
 w^\rho (t,x-h) -w^\rho (t,x) \ge (-\bar p_L-\delta) h - C_\delta
\end{equation}
for some $\rho_\delta>0$ and $C_\delta >0$ as above.
\end{proposition}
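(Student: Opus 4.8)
The idea is to build a suitable subsolution barrier on $[\rho_\delta,\rho]$ (resp.\ $[-\rho,-\rho_\delta]$) that has slope close to $\bar p_R$ (resp.\ $\bar p_L$) and to push it below $w^\rho$ by the comparison principle of Proposition~\ref{prop:comp}, exploiting the strict inequality $\bar A>\min\bar H_R$. First I would record that, by Proposition~\ref{prop:cor-trunc}, $w^\rho$ is equi-Lipschitz in $x$ and equi-bounded modulo $m^\rho$, uniformly in $\rho$; this reduces \eqref{eq:estim-slope} to a statement about the $x$-increments of $m^\rho$, and it also means that on any fixed window $[\rho_\delta-1,\rho_\delta]$ we have $|w^\rho(t,x)-w^\rho(t,\rho_\delta)|\le C$, giving the ``Dirichlet data'' needed to apply Proposition~\ref{prop:comp}.

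The heart of the matter is the construction of the barrier. For $x$ large, $H(t,x,\cdot)$ is close to $H_R(\cdot)$ (Assumption \textbf{(A5)}), which is $x,t$-independent and quasi-convex with minimum $\min\bar H_R=\min H_R$ (here I use that $\bar H_R$ is the homogenized Hamiltonian of a stationary $H_R$, so its min coincides with $\min H_R$; more precisely I would pick a corrector $v^R$ of the cell problem \eqref{eq:cell-alpha} for $\alpha=R$ with $\lambda=\bar H_R(p)$). Since $\bar A>\min\bar H_R$ and $\bar H_R$ is coercive and quasi-convex, there is a real $p$ with $\bar H_R(p)=\bar A$ and $p$ on the increasing branch, i.e.\ $p\ge \bar p^0_R$; by definition of $\bar p_R=\min E_R$ we may take $p=\bar p_R$. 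For $\delta>0$ small, $\bar H_R(\bar p_R-\delta)=\bar A-\eps_0$ for some $\eps_0>0$ by strict monotonicity on the increasing branch (if $\bar H_R$ is merely quasi-convex this still gives $\bar H_R(\bar p_R-\delta)<\bar A$, which is all we need). Then the function
\[
\underline w(t,x)=(\bar p_R-\delta)(x-\rho_\delta)+v^{R,\delta}(t,x)-(\bar A-\eps_0)t + \text{const}
\]
built from the corrector $v^{R,\delta}$ associated with slope $\bar p_R-\delta$, is, for $\rho_\delta$ large enough that $H$ and $H_R$ are within $\eps_0/2$ on $[\rho_\delta,+\infty)$, a subsolution of $w_t+H(t,x,w_x)\le \bar A-\eps_0/2$ on $\R\times(\rho_\delta,\rho)$ and of the $H^+$-equation at $x=\rho$ (here one uses that the slope $\bar p_R-\delta$ lies on the nondecreasing branch so $H^+$ and $H$ agree up to the error term — this is exactly where quasi-convexity of $\bar H_R$ and the choice of the increasing branch are used). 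Choosing the additive constant so that $\underline w\le w^\rho$ on $\R\times\{\rho_\delta\}$ (possible by the uniform bound above, with a loss $C_\delta$), Proposition~\ref{prop:comp} yields $\underline w\le w^\rho$ on $\R\times[\rho_\delta,\rho]$, which is precisely \eqref{eq:estim-slope} after absorbing the bounded corrector term into $C_\delta$. The statement \eqref{eq:estim-slope-bis} is symmetric, using $H_L$, $\bar H_L^-$, the decreasing branch, and $\bar p_L=\max E_L$.

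The main obstacle I anticipate is twofold. First, the corrector $v^{R,\delta}$ lives on all of $\R$ and is periodic, so when restricted to $[\rho_\delta,\rho]$ one must check it still produces a genuine subsolution including the boundary point $x=\rho$ where only $H^+$ is tested — this forces the careful choice of the branch (slope $\ge \bar p^0_R$) so that replacing $H$ by $H^+$ does not destroy the subsolution inequality, and one must track the uniform-in-$\rho$ smallness of the $\eps$-errors from \textbf{(A5)}, which only hold for $x$ beyond a threshold $\rho_\delta$ independent of $\rho$. Second, one has to make sure $\rho_\delta$ and $C_\delta$ do not depend on $\rho$: $\rho_\delta$ comes only from the convergence rate in \textbf{(A5)} and the modulus relating $\bar H_R$ to its min, and $C_\delta$ comes only from the equi-Lipschitz/equi-bounded estimates of Proposition~\ref{prop:cor-trunc}, so both are indeed $\rho$-independent, but this bookkeeping should be stated explicitly. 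A minor technical point is that $H$ need not be convex, so to assert $v^{R,\delta}$ exists and is well-behaved one invokes Remark~\ref{rem:time-indep} (the stationary case) together with Proposition~\ref{prop:quasi-conv}.
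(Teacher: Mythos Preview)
Your strategy is the paper's: choose a slope on the increasing branch of $\bar H_R$ with value strictly below the target, build a barrier from the corresponding cell corrector, transfer from $H_R$ to $H$ via \textbf{(A5)}, and apply Proposition~\ref{prop:comp} on a right half-interval. Two corrections are needed, however.

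First, the term $-(\bar A-\eps_0)t$ must be removed from $\underline w$. With it, $\underline w$ is not $1$-periodic in time, so Proposition~\ref{prop:comp} does not apply; and the right-hand side it actually produces is $0$ (plus the \textbf{(A5)} error), not $\bar A-\eps_0/2$ as you claim. The correct barrier is simply $(\bar p_R-\delta)x+v^{R,\delta}(t,x)+C$, which is $1$-periodic and solves $w_t+H_R(t,x,w_x)=\bar H_R(\bar p_R-\delta)$ (note also that $H_R$ is $\Z^2$-periodic in $(t,x)$ by \textbf{(A5)}, not ``$x,t$-independent''). Second, $w^\rho$ is a super-solution with right-hand side $\lambda_\rho$, not $\bar A$. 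The paper sidesteps this by picking the slope $p_R^\delta$ so that $\bar H_R^+(p_R^\delta)=\lambda_\rho-2\delta$ directly, and then applies Proposition~\ref{prop:comp} on $[x_0,\rho]$ for an \emph{arbitrary} base point $x_0\ge\rho_\delta$ (which immediately gives the increment from $x_0$ to $x_0+h$). Your fixed choice of slope $\bar p_R-\delta$ also works, but you must additionally enlarge $\rho_\delta$ so that $\lambda_\rho>\bar A-\eps_0/2$ for all $\rho\ge\rho_\delta$ (possible since $\lambda_\rho\nearrow\bar A$); this step is missing from your outline. Finally, your justification of the $H^+$-subsolution property at $x=\rho$ is not the right mechanism: the derivative of the barrier is $(\bar p_R-\delta)+v^{R,\delta}_x$, not $\bar p_R-\delta$ alone, so a pointwise branch argument is not available. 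The correct reason (the paper cites \cite[Proposition~2.19]{im}) is that restricting a sub-solution on $\R$ to a subinterval automatically yields the $H^+$ inequality at the right endpoint, ultimately because $H^+\le H$ everywhere.
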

\begin{proof}
We only prove \eqref{eq:estim-slope} since the proof of
\eqref{eq:estim-slope-bis} follows along the same lines. Let $\delta
>0$. In view of \textbf{(A5)}, we know that there exists $\rho_\delta$ such
that
\begin{equation}\label{eq:error-H}
 |H(t,x,p) - H_R (t,x,p)| \le \delta \quad \text{ for } \quad x \ge
\rho_\delta.
\end{equation}
Assume that $\bar A > \min \bar H_R$. Then Proposition
\ref{prop:quasi-conv} implies that we can pick $p^\delta_R$ such that
\[ \bar H_R (p^\delta_R) =\bar H_R^+ (p^\delta_R) = \lambda_\rho - 2
\delta  \]
for $\rho \ge \rho_0$ and $\delta \le \delta_0$, by choosing $\rho_0$
large enough and $\delta_0$ small enough. 

We now fix $\rho \ge \rho_\delta$ and $x_0\in \left[\rho_\delta,\rho\right]$. In view of
Proposition~\ref{prop:quasi-conv} applied to $p = p_R^\delta$, we know
that there exists a corrector $v_R$ solving \eqref{eq:cell-alpha} with
$\alpha =R$. Since it is $\Z^2$-periodic, it is bounded and $w_R
= p_R^\delta x + v_R (t,x)$ solves
\[(w_R)_t + H_R(t,x,(w_R)_x)=  \lambda_\rho -2\delta, \quad  (t,x)\in \R\times \R.\]
In particular, the restriction of $w_R$ to $[\rho_\delta,\rho]$
satisfies (see  \cite[Proposition 2.19]{im}),
\[\left\{\begin{array}{ll}
(w_R)_t + H_R(t,x,(w_R)_x)\le \lambda_\rho -2\delta & \text{ for }  (t,x)\in \R\times (\rho_\delta,\rho),\\
(w_R)_t + H^+_R(t,x,(w_R)_x)\le \lambda_\rho -2\delta &\text{ for } (t,x)\in \R\times \left\{\rho\right\}.
\end{array}\right.\]
In view of \eqref{eq:error-H}, this implies
\[\left\{\begin{array}{ll}
(w_R)_t + H(t,x,(w_R)_x)\le \lambda_\rho -\delta  & \text{ for }  (t,x)\in \R\times (\rho_\delta,\rho),\\
(w_R)_t + H^+(t,x,(w_R)_x)\le \lambda_\rho -\delta  &\text{ for } (t,x)\in \R\times \left\{\rho\right\}.
\end{array}\right.\]
Now we remark that $v = w^\rho - w^\rho (0,x_0)$ and $u = w_R -w_R
(0,x_0) -2C - 2 \|v_R\|_\infty$ satisfies 
\[ v(t,x_0) \ge - 2C \ge u (t,x_0)  \]
where $C$ is given by \eqref{eq:trunc-cor}. Thanks to the comparison
principle from Proposition~\ref{prop:comp}, we thus get for $x \in [x_0,\rho]$,
\[ w^\rho (t,x) -w^\rho (t,x_0) \ge p_R^\delta (x-x_0) - C_\delta\]
where $C_\delta$ is a large constant which does not depend on $\rho$. In
particular, we get \eqref{eq:estim-slope}, reducing $\delta$ if necessary.
\end{proof}

\subsection{Construction of global correctors}

We now state and prove a result which implies
Theorem~\ref{thm:corrector-simple} stated in the introduction. 
\begin{theorem}[Existence of a global corrector for the junction]\label{thm:corrector}
Assume \textbf{\upshape (A0)-(A5)} and either \textbf{\upshape (B-i)}
or \textbf{\upshape (B-ii)}.
\begin{enumerate}[\upshape i)]
\item {\upshape(General properties)} There exists a solution $w$ of
  \eqref{eq:cell} with $\lambda = \bar A$ such that for all $(t,x) \in
  \R^2$,
\begin{equation}\label{eq:w-x-osc}
\left|w(t,x)-m(x)\right|\le C  
\end{equation}
for some globally Lipschitz continuous function $m$, and
\[\bar A\ge A_0.\]
\item {\upshape(Bound from below at infinity)}
If $\bar A > \max_{\alpha=L,R}\left(\min \bar H_\alpha\right)$, then there exists $\delta_0>0$ such that for every $\delta\in
(0,\delta_0)$, there exists $\rho_\delta>\rho_0$ such that $w$
satisfies
\begin{equation}
\label{eq:w-lb-slope}
\begin{cases}
w(t,x+h)-w(t,x)\ge (\bar p_R -\delta) h -C_\delta & \quad \text{ for } x\ge
\rho_\delta \quad \text{and}\quad h\ge 0 , \\
w(t,x-h)-w(t,x)\ge (-\bar p_L -\delta) h -C_\delta & \quad \text{ for } x\le 
-\rho_\delta \quad \text{and}\quad h\ge 0.
\end{cases}
\end{equation}
The first line of (\ref{eq:w-lb-slope}) also holds if we have only $\bar A> \min
\bar H_R$, while the second line of (\ref{eq:w-lb-slope}) also holds 
if we have only $\bar A>  \min \bar H_L$.
\item {\upshape (Rescaling $w$)}
For $\eps>0$, we set
\[w^\eps(t,x)=\eps w(\eps^{-1}t,\eps^{-1}x).\]
Then (along a subsequence $\eps_n\to 0$), we have that 
$w^\eps$ converges locally uniformly towards a function $W=W(x)$ which satisfies
\begin{equation}\label{eq:W-estim}
\left\{\begin{array}{ll}
\left|W(x)-W(y)\right|\le C \left|x-y\right| &\quad \text{for all}\quad x,y\in \R,\\
\bar{H}_R(W_x)=\bar A \quad \text{and}\quad \hat{p}_R \ge W_x\ge \bar p_R &\quad \text{ for } x\in (0,+\infty),\\
\bar{H}_L(W_x)=\bar A \quad \text{and}\quad \hat{p}_L \le  W_x\le \bar p_L &\quad \text{ for } x\in  (-\infty,0).
\end{array}\right.
\end{equation}
In particular, we have  $W(0)=0$ and
\begin{equation}\label{eq:W-estim-2-bis}
\hat p_R x 1_{\left\{x>0\right\}} + \hat p_L x
1_{\left\{x<0\right\}}\ge W(x)\ge \bar p_R x
1_{\left\{x>0\right\}} + \bar p_L x 1_{\left\{x<0\right\}}.
\end{equation}
\end{enumerate}
\end{theorem}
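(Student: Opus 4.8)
The plan is to build the global corrector $w$ as a limit of the truncated correctors $w^\rho$ from Proposition~\ref{prop:cor-trunc} as $\rho\to+\infty$, then read off its properties from the uniform estimates already established. First I would normalize $w^\rho$ by subtracting $w^\rho(0,0)$; the Lipschitz bound on $m^\rho$ together with $|w^\rho(t,x)-m^\rho(x)|\le C$ (both constants independent of $\rho$ by Proposition~\ref{prop:cor-trunc}) gives equiboundedness on every compact set, and Proposition~\ref{prop:bar-A} already supplies $\lambda_\rho\to\bar A$. By a diagonal/Ascoli argument, along a subsequence $\rho_n\to+\infty$ one has $w^{\rho_n}-w^{\rho_n}(0,0)\to w$ locally uniformly (using the half-relaxed limit machinery to handle possible discontinuities) and $m^{\rho_n}-m^{\rho_n}(0)\to m$ locally uniformly, with $m$ globally Lipschitz. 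Discontinuous stability of viscosity solutions then shows $w$ solves \eqref{eq:cell} with $\lambda=\bar A$, and passing to the limit in the pointwise estimates of Proposition~\ref{prop:cor-trunc} yields \eqref{eq:w-x-osc}. The inequality $\bar A\ge A_0$ then follows exactly as in the proof of Theorem~\ref{thm:bar-A}: $w$ restricted to a half-line far from the origin is a sub-solution of the corresponding $H_\alpha$-cell problem, whose admissible $\lambda$'s are bounded below by $\min\bar H_\alpha$ (this is part (i)).

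For part (ii), I would invoke Proposition~\ref{prop:slopes-trunc}: under the hypothesis $\bar A>\min\bar H_R$ (resp. $\bar A>\min\bar H_L$) there is $\rho_\delta$ and $C_\delta$, \emph{independent of $\rho$}, with the slope lower bounds \eqref{eq:estim-slope}–\eqref{eq:estim-slope-bis} for $w^\rho$. Since these bounds are uniform in $\rho$ and translation-invariant under the normalization, they pass directly to the locally uniform limit $w$, giving \eqref{eq:w-lb-slope}. The case $\bar A>\max_{\alpha}(\min\bar H_\alpha)$ is just the conjunction of the two one-sided statements, so $\delta_0$ is the smaller of the two thresholds from Proposition~\ref{prop:slopes-trunc}. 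One small point to check is that $\rho_\delta$ can be taken $>\rho_0$, which is harmless since enlarging $\rho_\delta$ only weakens the conclusion.

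For part (iii), set $w^\eps(t,x)=\eps w(\eps^{-1}t,\eps^{-1}x)$. The bound \eqref{eq:w-x-osc} gives $|w^\eps(t,x)-\eps\, m(\eps^{-1}x)|\le C\eps\to 0$, and since $m$ is globally $C$-Lipschitz, $x\mapsto \eps\, m(\eps^{-1}x)$ is $C$-Lipschitz with values bounded on compacts (after noting $w(0,0)$-type normalization forces $m(0)$ bounded); hence $\{w^\eps\}$ is equi-Lipschitz in $x$ and locally bounded, so along a subsequence $w^\eps\to W$ locally uniformly with $W$ $C$-Lipschitz and, because the $t$-oscillation is $O(\eps)$, $W$ depends on $x$ only. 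The key step is identifying the equation satisfied by $W$ on each half-line: on $(0,+\infty)$, for $x$ bounded away from $0$ the rescaled argument $\eps^{-1}x\to+\infty$, so by \textbf{(A5)} the Hamiltonian $H(\eps^{-1}t,\eps^{-1}x,\cdot)$ is uniformly close to $H_R$, and a standard perturbed-test-function/homogenization argument (as in \cite{evans,lpv}, using the correctors from Proposition~\ref{prop:quasi-conv}) shows $W$ is a viscosity solution of $\bar H_R(W_x)=\bar A$ on $(0,+\infty)$; symmetrically $\bar H_L(W_x)=\bar A$ on $(-\infty,0)$. Combined with the slope bounds \eqref{eq:w-lb-slope} — which after rescaling force $W_x\ge\bar p_R$ a.e. on $(0,+\infty)$ and $W_x\le\bar p_L$ a.e. on $(-\infty,0)$ — and the fact that $\bar H_R(W_x)=\bar A$ together with $W_x\ge\bar p_R\ge\bar p_R^0$ pins $W_x$ into $[\bar p_R,\hat p_R]$ (the set where $\bar H_R^+=\bar H_R=\bar A$, using quasi-convexity and coercivity of $\bar H_R$), we obtain \eqref{eq:W-estim}. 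Finally $W(0)=0$ follows from $w^\eps(0,0)=\eps w(0,0)\to 0$, and \eqref{eq:W-estim-2-bis} is the integrated form of the slope bounds in \eqref{eq:W-estim}.

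The main obstacle I anticipate is part (iii), specifically making the homogenization limit identifying $\bar H_R(W_x)=\bar A$ rigorous near the interface: the rescaled Hamiltonian is only \emph{asymptotically} equal to $H_R$ (with an $\eps$-dependent error that is uniform only for $\eps^{-1}x$ large), so the perturbed test function argument must be localized away from $x=0$ and the error from \textbf{(A5)} absorbed into the $\theta/2$-type slack — this is where care is needed, whereas parts (i) and (ii) are essentially bookkeeping on top of the truncated-problem estimates already proven.
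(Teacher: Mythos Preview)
Your plan matches the paper's proof almost step for step: pass to the limit in the truncated correctors $w^\rho$ using the uniform estimates of Proposition~\ref{prop:cor-trunc}, inherit the slope bounds from Proposition~\ref{prop:slopes-trunc}, and identify the equation for $W$ on each half-line by the perturbed-test-function argument (the paper simply points to the ``$\bar x\neq 0$'' case in the Appendix, which is exactly the localized homogenization step you describe).

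Two small points where the paper is more careful than your outline. First, in part~(i) you cannot say that ``$w^{\rho_n}\to w$ locally uniformly'' and that $w$ is a \emph{solution}: the $w^\rho$ may be discontinuous, so the half-relaxed limits $\overline w=\limsup^*(w^\rho-w^\rho(0,0))$ and $\underline w=\liminf_*(w^\rho-w^\rho(0,0))$ need not coincide. The paper uses discontinuous stability to get that $\overline w-2C$ is a sub-solution and $\underline w$ a super-solution, and then invokes Perron's method to produce an actual solution $w$ trapped between them; you should insert this step. Second, in part~(iii) your derivation of $W_x\ge\bar p_R$ relies on \eqref{eq:w-lb-slope}, which is only available under the strict inequality $\bar A>\min\bar H_R$. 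The paper splits into two cases: if $\bar A>\min\bar H_R$ use \eqref{eq:w-lb-slope} as you do; if $\bar A=\min\bar H_R$ then the equation $\bar H_R(W_x)=\bar A=\min\bar H_R$ by itself forces $W_x$ into the argmin interval $[\bar p_R,\hat p_R]$, so the conclusion of \eqref{eq:W-estim} still holds. Make this dichotomy explicit.
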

\begin{proof}
We consider (up to some subsequence)
\[ \overline w = \limsup_{\rho\to +\infty}{}^* (w^\rho - w^\rho (0,0)),\quad \underline w = \liminf_{\rho\to +\infty}{}_* (w^\rho -
w^\rho (0,0))\quad  \text{ and } \quad  m=\lim_{\rho\to +\infty} (m^\rho-m^\rho(0)).\]
We derive from \eqref{eq:trunc-cor} that $\underline w$ and $\overline
w$ are finite and
\[ m- C \le \underline w \le \overline w \le m+C. \]
Moreover, discontinuous stability of viscosity solutions imply that
$\overline w-2C$ and $\underline w$ are respectively a sub-solution and a
super-solution of \eqref{eq:cell} with $\lambda = \bar A$ (recall
Proposition~\ref{prop:bar-A}). Hence, a discontinuous viscosity
solution $w$ of \eqref{eq:cell} can be constructed by Perron's
method (in the class of functions that are $1$-periodic with respect
to time). 

Using again \eqref{eq:trunc-cor}, $w$ and $m$ satisfy
\eqref{eq:w-x-osc}.  We also get \eqref{eq:w-lb-slope} from
Proposition~\ref{prop:slopes-trunc} (use \eqref{eq:trunc-cor} and
pass to the limit with $m$ instead of $w$ if necessary).

We now study $w^\eps(t,x)=\varepsilon
w(\varepsilon^{-1}t,\varepsilon^{-1}x)$. Remark that
\eqref{eq:trunc-cor} implies in particular that
\[ w^\eps (t,x) = \eps m(\eps^{-1} x) + O (\eps) .\] In particular, we
can find a sequence $\eps_n \to 0$ such that 
\[ w^{\eps_n} (t,x) \to W(x) \quad \text{ locally uniformly as } n \to
+ \infty,\] with $W(0)=0$.  Arguing as in the proof of convergence
away from the junction point (see the case $\bar x \neq 0$ in
Appendix), we deduce that $W$ satisfies
\begin{align*}
\bar H_R (W_x ) = \bar A  & \text{ for } x >0, \\
\bar H_L (W_x ) = \bar A & \text{ for } x<0.
\end{align*}
We also deduce from \eqref{eq:w-lb-slope} that for all $\delta >0$ and $x>0$,
\[ W_x \ge \bar p_R - \delta \]
in the case where $\bar A > \min \bar H_R$. Assume now that $\bar A =
\min \bar H_R$. This implies that 
\[\bar  p_R \le W_x \le \hat p_R\]
and, in all cases, we thus get (\ref{eq:W-estim-2-bis}) for $x>0$.

Similarly, we can prove for $x<0$ that 
\[\hat p_L\le W_x \le \bar p_L \]
and the proof of \eqref{eq:W-estim} of is achieved.  This implies
(\ref{eq:W-estim-2-bis}).  The proof of Theorem~\ref{thm:corrector} is now
complete.
\end{proof}

\subsection{Proof of Theorem~\ref{thm:bar-A}}

\begin{proof}[Proof of Theorem~\ref{thm:bar-A}]
Let $\bar A$ denote the limit of $A_\rho$ (see
Proposition~\ref{prop:bar-A}). We want to prove that $\bar A = \inf E$
where we recall that 
\[ E= \{ \lambda \in \R: \exists \text{$w$ sub-solution of
  \eqref{eq:cell}} \}. \]
In view of \eqref{eq:cell}, sub-solutions are assumed to be periodic with 
respect to time; we will see that they also automatically satisfy some
growth conditions at infinity, see \eqref{eq:growth} below. 

We argue by contradiction by assuming that there exist $\lambda <
\bar A$ and  a sub-solution $w_\lambda$ of \eqref{eq:cell}.
The function 
\[ m_\lambda (x) = \sup_{t \in \R} (w_\lambda)^* (t,x) \]
satisfies 
\[ H (t(x),x,(m_\lambda)_x ) \le C \]
(for some function $t(x)$). 
Assumption~\textbf{(A3)} implies that $m_\lambda$ is globally Lipschitz
continuous. Moreover, since $w_\lambda$ is $1$-periodic w.r.t. time and $(w_\lambda)_t
\le C$, then 
\[ |w_\lambda(t,x) -m_\lambda(x)  | \le C.\]
Hence 
\[w_\lambda^\eps (t,x) = \eps w_\lambda (\eps^{-1}t,\eps^{-1}x) \]
has a limit $W^\lambda$ which satisfies 
\[ \bar H_R (W^\lambda_x) \le \lambda \quad \text{ for } x >0 .\]
In particular, for $x>0$, 
\[ W^\lambda_x \le \hat p^\lambda_R := \max \{ p \in \R: \bar H_R (p) =
\lambda \} < \bar p_R\]
where $\bar p_R$ is defined in \eqref{def:pR}.
Similarly, 
\[ W^\lambda_x \ge \hat p^\lambda_L := \min \{ p \in \R: \bar H_L(p)=
\lambda \} > \bar p_L\]
with $\bar p_L$ defined in \eqref{def:pL}. Those two inequalities
imply in particular that for all $\delta>0$, there exists
$\tilde{C}_\delta$ such that 
\begin{equation}
\label{eq:growth}
 w_\lambda (t,x) \le \begin{cases}
(\hat p^\lambda_R + \delta) x + \tilde{C}_\delta & \text{ for } x >0, \\
(\hat p^\lambda_L + \delta) x + \tilde{C}_\delta & \text{ for } x <0.
\end{cases}
\end{equation}
In particular, 
\[ w_\lambda  < w \text{ for } |x| \ge R \]
if $\delta$ is small enough and $R$ is large enough. In particular,
\[ w_\lambda < w + C_R \text{ for } x \in \R.\]
Remark finally that $u (t,x) = w(t,x) + C_R - \bar A t$ is a solution and $u_\lambda (t,x)
= w_\lambda (t,x) - \lambda t$ is a sub-solution of \eqref{eq:hj-eps}
with $\eps = 1$ and $u_\lambda (0,x) \le u (0,x)$. Hence the comparison
principle implies that 
\[ w_\lambda(t,x) - \lambda t \le w(t,x) - \bar A t + C_R.\]
Dividing by $t$ and letting $t$ go to $+\infty$, we get the following
contradiction
\[ \bar A \le \lambda.\]
The proof is now complete. 
\end{proof}

\section{Proof of Theorem~\ref{thm:conv-time}}
\label{sec:extension}

This section is devoted to the proof of
Theorem~\ref{thm:conv-time}. As pointed out in
Remark~\ref{rem:meaning} above, the notion of solutions for
\eqref{eq:hj-eps} has to be first made precise because
the Hamiltonian is discontinuous with respect to time.

\paragraph{Notion of solutions for \eqref{eq:hj-eps}.}

For $\varepsilon=1$, a function $u$ is a {\it
  solution} of \eqref{eq:hj-eps} if it is globally Lipschitz
continuous (in space and time) and if it solves successively the
Cauchy problems on time intervals $[\tau_i+k,\tau_{i+1}+k)$ for
$i=0,\dots, K$ and $k\in \N$.  

Because of this definition (approach), we have to show that if the
initial datum $u_0$ is globally Lipschitz continuous, then the solution
to the successive Cauchy problems is also globally Lipschitz
continuous (which of course insures its uniqueness from the classical
comparison principle). See Lemma~\ref{lem:glob-lip} below.

\begin{proof}[Proof of Theorem~\ref{thm:conv-time} i)] In view of the
  proof of Theorem~\ref{thm:conv}, the reader can check that it is
  enough to get a global Lipschitz bound on the solution $u^\eps$ and
  to construct a global corrector in this new framework. The proof of
  these two facts is postponed, see Lemmas~\ref{lem:glob-lip} and
  \ref{lem:cor-ok} following this proof. Notice that half-relaxed
  limits are not necessary anymore and that the reasoning can be
  completed by considering locally converging subsequences of
  $\{u^\eps\}_\eps$. Notice also that the perturbed test
    function method of Evans \cite{evans} still works.  As usual, if the viscosity
    sub-solution inequality is not satisfied at the limit, this
    implies that the perturbed test function is a super-solution
    except at times $\varepsilon\left(\Z +
      \left\{\tau_0,\dots,\tau_K\right\}\right)$. Still a
    localized comparison principle in each slice of times for each
    Cauchy problem is sufficient to conclude.
\end{proof}
\begin{lemma}[Global Lipschitz bound]\label{lem:glob-lip}
The function $u^\eps$ is equi-Lipschitz continuous with respect to
time and space. 
\end{lemma}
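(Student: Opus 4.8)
The plan is to obtain a space gradient bound first, and then deduce the time gradient bound from the equation, arguing slice by slice in time and tracking that the constants do not degenerate as the number of slices grows. First I would establish the \emph{space} Lipschitz bound. Fix $\eps=1$ (the general case follows by scaling, since the problem is invariant under the parabolic rescaling that defines $u^\eps$). On a single time slice $[\tau_i+k,\tau_{i+1}+k)$ the Hamiltonian is, for each fixed $t$, of the form prescribed by \textbf{(C1)}: it equals a fixed coercive quasi-convex $\bar H_\alpha(p)$ on each interval $(b_\alpha,b_{\alpha+1})$ and a junction Hamiltonian $\max(\bar H_{\alpha-1}^+(p^-),\bar H_\alpha^-(p^+),a_\alpha(t))$ at $x=b_\alpha$, with $a_\alpha(t)$ piecewise constant. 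The key observation is that spatial translations $x\mapsto x+h$ do \emph{not} change the equation outside $N_1=(b_1,b_N)$ and only permute the (finitely many) junction points inside. So I would use the standard doubling-of-variables / comparison argument: since $\{U^\eps_0\}$ is equi-Lipschitz and the $a_\alpha$ are bounded by \textbf{(C3)}, the coercivity \textbf{(A3)} of each $\bar H_\alpha$ gives, via the comparison principle for junction problems from \cite{im}, an a priori bound $|u^\eps_x|\le L_1$ on the first slice with $L_1$ depending only on the Lipschitz constant of the initial datum, $\max_\alpha\|a_\alpha\|_\infty$, and the $\bar H_\alpha$. Crucially, this bound is of the form $L_1 = \max(L_0, C_*)$ where $C_*$ depends only on the data (not on $L_0$): indeed coercivity forces any sub/super-solution slope to lie in a fixed compact set once one is away from the initial layer, so the Lipschitz constant cannot grow from slice to slice — it stabilizes at $C_*:=\sup\{|p|: \min_\alpha \bar H_\alpha(p)\le \max_\alpha\|a_\alpha\|_\infty + C\}$ or similar. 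Iterating over the successive Cauchy problems on $[\tau_i+k,\tau_{i+1}+k)$, the output of one slice is the Lipschitz initial datum of the next, and the constant never exceeds $\max(L_0,C_*)$. This yields a global-in-time space Lipschitz bound $L:=\max(L_0,C_*)$.

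Next I would deduce the \emph{time} Lipschitz bound. Within each open slice the Hamiltonian is continuous in time (in fact the $a_\alpha$ are \emph{constant} there), so the usual argument applies: since $|u^\eps_x|\le L$, the function $(t,x)\mapsto u^\eps(t+s,x)$ and $u^\eps(t,x)\pm C't'$ can be compared, with $C'=\sup\{|H(t,x,p)|: t\in\R,\,x\in\R,\,|p|\le L\}<+\infty$ (finite by \textbf{(C1)--(C3)} and the space bound), giving $|u^\eps(t+s,x)-u^\eps(t,x)|\le C'|s|$ \emph{as long as $t$ and $t+s$ lie in the same slice}. To cross slice boundaries one simply concatenates: at a boundary time $\tau_i+k$ the solution is continuous (by construction of the successive Cauchy problems), so the one-sided time-Lipschitz estimates on consecutive slices patch together with the \emph{same} constant $C'$, because $C'$ depends only on $L$ and on $\sup_{t,x,|p|\le L}|H|$, both slice-independent. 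Hence $u^\eps$ is equi-Lipschitz in time with constant $C'$. After undoing the rescaling one checks that these bounds are uniform in $\eps$ (the rescaling preserves Lipschitz constants in both variables), which gives the claimed equi-Lipschitz continuity.

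The main obstacle, I expect, is the \emph{uniformity of the space Lipschitz constant across infinitely many time slices}: one must argue carefully that the bound does not deteriorate geometrically (which a naive slice-by-slice comparison would suggest). The resolution is precisely the coercivity \textbf{(A3)} of the $\bar H_\alpha$ together with the uniform bound $\max_\alpha\|a_\alpha\|_\infty<\infty$ from \textbf{(C3)}: these force the gradient of any bounded-in-Lipschitz-norm solution into a fixed compact $p$-interval, so the map ``initial Lipschitz constant $\mapsto$ terminal Lipschitz constant'' for one slice is a contraction onto $[0,C_*]$ in the sense that its value is always $\le\max(\text{input},C_*)$. A secondary technical point is justifying the comparison principle at the junction points $x=b_\alpha$ with the $\bar H_\alpha^\pm$ structure; but this is exactly the setting of \cite{im} (and of Proposition~\ref{prop:comp} here, on bounded space intervals, the exterior rays being handled by the far-field coercivity), so it can be invoked directly.
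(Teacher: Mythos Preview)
Your proposal has the right ingredients but the order is reversed, and this creates a genuine gap. You try to establish the \emph{space} Lipschitz bound first and deduce time Lipschitz afterwards; the paper does the opposite, and for good reason. Your space-first step leans on either translation invariance (``spatial translations \ldots only permute the junction points'') or on coercivity alone, and neither works as written: a shift $x\mapsto x+h$ \emph{moves} the junction points to $b_\alpha-h$, so $u(t,\cdot+h)$ does not solve the same equation and cannot be compared with $u$ directly; and coercivity of the $\bar H_\alpha$ yields a bound on $|u_x|$ only once one already knows $|u_t|\le C$ (so that the equation gives $|\bar H_\alpha(u_x)|\le C$). Your heuristic ``coercivity forces slopes into a fixed compact set once one is away from the initial layer'' is precisely the time-first argument in disguise, not an independent route to a space bound.

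The paper's route is direct. With $L$ the Lipschitz constant of the initial datum, set
\[
C:=\max\Bigl(\max_{\alpha}\|a_\alpha\|_\infty,\ \max_{\alpha}\max_{|p|\le L}|\bar H_\alpha(p)|\Bigr),
\]
and observe that $u_0(x)\pm Ct$ are barriers on the first slice $[0,\tau_1)$; comparison gives $|u_t|\le C$ there, and the equation then yields the space bound on each branch in the form $|\bar H_\alpha(u_x)|\le C$. The point that makes the iteration work --- and the reason no ``contraction onto $[0,C_*]$'' argument is needed --- is that the space bound at $t=\tau_1$ is kept in the form $|\bar H_\alpha(u_x)|\le C$ rather than converted back to $|u_x|\le L'$: this is exactly what is required for $u(\tau_1,x)\pm C(t-\tau_1)$ to be barriers on the next slice with the \emph{same} constant $C$ (the junction conditions using $a_\alpha\le C$). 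So the Lipschitz constant is fixed once and for all by the initial data and does not drift from slice to slice.
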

\begin{proof}
Remark that it is enough to get the result for $\eps =1$ since 
$u(t,x) = \eps^{-1} u^\eps (\eps t,\eps x)$ satisfies the equation
with $\eps =1$ and the initial condition
\[ u^\eps_0 (x) = \frac1\eps U^\eps_0 (\eps x) \]  
is equi-Lipschitz continuous. For the sake of clarity, we drop the $\eps$
superscript in $u^\eps_0$ and simply write $u_0$. 

We first derive bounds on the time interval
$[\tau_0,\tau_1)=[0,\tau_1)$.  In order to do so, we assume that the
initial data satisfies $|(u_0)_x|\le L$. Then as usual, there is a
constant $C>0$ such that
$$u^\pm(t,x)=u_0(x)\pm C t$$
are super-/sub-solutions of \eqref{eq:hj-eps}-\eqref{eq:ic-bis} with $H$ given by \textbf{(C1)}
with for instance
\begin{equation}\label{eq::g30}
C:=\max\left( \max_{\alpha=1,\dots,N}\|a_\alpha\|_\infty, \; \max_{\alpha=0,\dots,N}\left(\max_{|p|\le L} |\bar H_\alpha(p)|\right)\right).
\end{equation}
Let $u$ be the standard (continuous) viscosity solution of
(\ref{eq:hj-eps}) on the time interval $(0,\tau_1)$ with initial
data given by $u_0$ (recall that $\eps =1$).  Then for any $h>0$ small
enough, we have $-Ch\le u(h,x)-u(0,x)\le Ch$. The comparison principle
implies for $t\in (0,\tau_1-h)$
$$-Ch\le u(t+h,x)-u(t,x)\le Ch$$
which shows the Lipschitz bound in time, on the time interval $[0,\tau_1)$:
\begin{equation}\label{eq::g31}
|u_t|\le C.
\end{equation}
From the Hamilton-Jacobi equation, we now deduce the following
Lipschitz bound in space on the time interval $(0,\tau_1)$:
\begin{equation}\label{eq::g32}
|\bar H_\alpha(u_x(t,\cdot))|_{L^\infty(b_{\alpha},b_{\alpha+1})}\quad \le C\quad \mbox{for}\quad \alpha=0,\dots,N.
\end{equation}

We can now derive bounds on the time interval $[\tau_1,\tau_2)$ as
follows.  We deduce first that (\ref{eq::g32}) still holds
true at time $t=\tau_1$.  Combined with our definition (\ref{eq::g30})
of the constant $C$, we also deduce that
$$v^\pm(t,x)=u(\tau_1,x) \pm C(t-\tau_1)$$
are sub/super-solutions of (\ref{eq:hj-homog}) for $t\in
(\tau_1,\tau_2)$ where $H$ is given by \textbf{(C1)}.
Reasoning as above, we get bounds (\ref{eq::g31}) and
(\ref{eq::g32}) on the time interval $[\tau_1,\tau_2)$.

Such a reasoning can be used iteratively to get the Lipschitz bounds
(\ref{eq::g31}) and (\ref{eq::g32})  for $t\in
[0,+\infty)$. The proof of the lemma is now complete.
\end{proof}
\begin{lemma}\label{lem:cor-ok}
The conclusion of Theorem~\ref{thm:corrector} still holds true in this
new framework. 
\end{lemma}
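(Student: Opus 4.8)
The plan is to revisit the proof of Theorem~\ref{thm:corrector}, checking that each ingredient survives in the traffic-light framework; the only genuine novelty is the time-discontinuity of the Hamiltonian. Two structural simplifications help. First, between consecutive junction points the Hamiltonian equals $\bar H_\alpha(p)$, which is already independent of $(t,x)$, so the cell problems ``away from the junction'' (Proposition~\ref{prop:quasi-conv}) are trivial: the effective Hamiltonian is $\bar H_\alpha$ itself and a corrector for a slope $p$ is the linear map $x\mapsto px$ as soon as $\bar H_\alpha(p)=\lambda$. Second, the only time-dependence sits at the finitely many points $b_\alpha$, through the piecewise-constant $1$-periodic limiters $a_\alpha$; accordingly every comparison or stability argument below is run \emph{slice by slice} on the time intervals $[\tau_i,\tau_{i+1})$, where $H$ is autonomous and the theory of \cite{im} applies directly, the pieces being glued continuously at the $\tau_i$ and extended $1$-periodically (exactly the notion of solution used in Lemma~\ref{lem:glob-lip}).

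First I would redo the truncated cell problem (Proposition~\ref{prop:cor-trunc}). Fix $\rho$ large enough that $(b_1,b_N)\subset(-\rho,\rho)$, so that near $x=\pm\rho$ the Hamiltonian equals $\bar H_0$ or $\bar H_N$ and $H^\pm$ are the corresponding monotone branches. Construct the discounted approximation $w^\delta$ of the $1$-periodic problem $\delta w^\delta+w^\delta_t+H(t,x,w^\delta_x)=0$ on $(-\rho,\rho)$ as the fixed point of the period map obtained by solving the successive Cauchy problems on the slices; the constants $\pm C/\delta$ are trivial super/sub-solutions, hence $|\delta w^\delta|\le C$. The a priori estimates of Proposition~\ref{prop:cor-trunc} then go through with the slice-by-slice reading, all driven by the coercivity of $\bar H_0,\dots,\bar H_N$ and by the single uniform constant of \eqref{eq::g30}: $m^\delta(x)=\sup_t(w^\delta)^*(t,x)$ is globally Lipschitz, $w^\delta_t\le C$ within each slice (with continuity at the $\tau_i$), and periodicity in time yields $|w^\delta-m^\delta|\le C$. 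Letting $\delta\to0$ along a subsequence produces $\lambda_\rho$ and a corrector $w^\rho$ satisfying the analogue of \eqref{eq:trunc-cor}. The monotonicity and boundedness of $\rho\mapsto\lambda_\rho$ (Proposition~\ref{prop:bar-A}) follow verbatim from \cite[Proposition 2.19]{im}, so $\bar A:=\lim_{\rho\to+\infty}\lambda_\rho$ exists.

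Next I would transport the slope control. The mixed boundary value comparison principle (Proposition~\ref{prop:comp}) holds here as well, applied on each slice with continuity at the $\tau_i$ and periodicity, the parameter $\eps_0$ playing exactly its former role. With it, Proposition~\ref{prop:slopes-trunc} carries over: for $x\ge\rho_\delta$ compare $w^\rho$ with the linear map $x\mapsto p_N^\delta x$, where $p_N^\delta$ lies on the non-decreasing branch of $\bar H_N$ with $\bar H_N(p_N^\delta)=\lambda_\rho-2\delta$ (and symmetrically on $x\le-\rho_\delta$ with $\bar H_0$), which yields \eqref{eq:w-lb-slope}, the slopes $\bar p_R,\hat p_R$ (resp.\ $\bar p_L,\hat p_L$) being given by \eqref{def:pR} (resp.\ \eqref{def:pL}) with $\bar H_R=\bar H_N$ (resp.\ $\bar H_L=\bar H_0$). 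Then discontinuous stability (again slice by slice) together with Perron's method produce, as $\rho\to+\infty$, a global corrector $w$ of \eqref{eq:cell} with $\lambda=\bar A$ satisfying \eqref{eq:w-x-osc}, and $\bar A\ge A_0$ follows as in Theorem~\ref{thm:corrector} i). Finally, $w^\eps(t,x)=\eps w(\eps^{-1}t,\eps^{-1}x)=\eps m(\eps^{-1}x)+O(\eps)$, so a subsequence converges locally uniformly to some $W=W(x)$ with $W(0)=0$; on $x>0$ (resp.\ $x<0$) the limit solves the autonomous equation $\bar H_N(W_x)=\bar A$ (resp.\ $\bar H_0(W_x)=\bar A$) as in the proof away from the junction, and combining with \eqref{eq:w-lb-slope} gives \eqref{eq:W-estim} and \eqref{eq:W-estim-2-bis}.

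The main obstacle is exactly the time-discontinuity of $H$: since \textbf{(A2)} is unavailable, the comparison and stability statements used in Theorem~\ref{thm:corrector} cannot be cited as black boxes. The remedy is the bookkeeping above --- work on each slice $[\tau_i,\tau_{i+1})$, where $H$ is autonomous and \cite{im} applies, then glue continuously and $1$-periodically; once this is in place, every quantitative estimate is controlled, as before, by the coercivity of the $\bar H_\alpha$'s and by the uniform constant \eqref{eq::g30}.
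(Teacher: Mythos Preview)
Your slice-by-slice strategy for handling the time discontinuity is correct, and the slope-control and rescaling arguments transfer essentially as you describe. The difficulty is in the construction of the truncated corrector $w^\rho$. In this framework a \emph{solution} is by definition globally Lipschitz (solving successive Cauchy problems), and the paper explicitly rejects the discounted route you propose, writing at the start of its Step~1.1: ``The method presented in the proof of Proposition~\ref{prop:cor-trunc}, using a term $\delta w^\delta$, has the inconvenience that it would not clearly provide a Lipschitz solution.'' Your fixed-point-of-the-period-map idea is a reasonable attempt to repair this, but you do not argue that the Lipschitz constant stays uniformly bounded through the iterations of the period map; the barriers $u_0\pm Ct$ behind Lemma~\ref{lem:glob-lip} and the constant \eqref{eq::g30} are derived for the undiscounted equation, and once the zeroth-order term $\delta w^\delta$ enters they must be reworked. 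The same regularity issue recurs when you pass to $\rho\to+\infty$ via ``discontinuous stability together with Perron's method'': Perron produces a merely semicontinuous object, again outside the class of solutions used here.

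The paper's route avoids both difficulties by never leaving the Lipschitz class. It solves the \emph{undiscounted} Cauchy problem \eqref{eq::g33} on $[-\rho,\rho]$ with zero initial data, so Lemma~\ref{lem:glob-lip} directly gives the uniform Lipschitz bounds \eqref{eq::g34}. An ergodic lemma (proved in the Appendix) then extracts $\lambda_\rho$ with $|w^\rho(t,x)+\lambda_\rho t|\le C_0$. To manufacture $1$-periodicity in time, the paper takes a limit along integer shifts $k_n\to+\infty$, builds Lipschitz sub/super-solutions as the inf/sup of the integer translates $w^\rho_\infty(t+k,x)+k\lambda_\rho$, solves a \emph{second} Cauchy problem with the sub-solution at $t=0$ as initial datum, and uses the resulting monotonicity $\tilde w^\rho(k+1,\cdot)\ge \tilde w^\rho(k,\cdot)-\lambda_\rho$ together with boundedness of $\tilde w^\rho+\lambda_\rho t$ to obtain exact periodicity in the large-time limit. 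Every object in this chain is Lipschitz by construction. The passage $\rho\to+\infty$ is then a direct Arzel\`a--Ascoli limit of the uniformly Lipschitz $w^\rho-w^\rho(0,0)$, with no Perron step needed. What your approach would buy, if the Lipschitz bookkeeping for the discounted problem were filled in, is a shorter argument that parallels Section~\ref{sec:trunc} more closely; what the paper's approach buys is that Lipschitz regularity is automatic at every stage.
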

\begin{proof}
The proof proceeds in several steps. 

\paragraph{Step 1. Construction of a time periodic corrector $w^\rho$
  on $[-\rho,\rho]$.} \medskip
We first construct a Lipschitz corrector on a truncated domain. In
order to do so, we proceed in several steps. \medskip

\noindent {\sc Step 1.1. First Cauchy problem on $(0,+\infty)$.}
The method presented in the proof of Proposition \ref{prop:cor-trunc},
using a term $\delta w^\delta$ has the inconvenience that it would not
clearly provide a Lipschitz solution.  In order to stick to our notion
of globally Lipschitz solutions, we simply solve the Cauchy problem
for $\rho>\rho_0 :=\max_{\alpha=1,\dots,N}|b_\alpha|$:
\begin{equation}\label{eq::g33}
\left\{\begin{array}{ll}
w^\rho_t + H(t,x,w^\rho_x)=0 &\quad \mbox{on}\quad (0,+\infty)\times
(-\rho,\rho)\, ,\\
w^\rho_t + \bar H^-_N (w^\rho_x)=0 &\quad \mbox{on}\quad (0,+\infty)\times \left\{-\rho\right\},\\
w^\rho_t + \bar H^+_0 (w^\rho_x)=0 &\quad \mbox{on}\quad (0,+\infty)\times \left\{\rho\right\},\\
w^\rho(0,x)=0 &\quad \mbox{for}\quad x\in [-\rho,\rho]\, .
\end{array}\right.
\end{equation}
As in the proof of the previous lemma, we get global Lipschitz bounds
with a constant $C$ (independent on $\rho>0$ and independent on the
distances $\ell_\alpha = b_{\alpha+1}-b_\alpha$):
\begin{equation}\label{eq::g34}
|w^\rho_t|,\quad  |\bar
H_\alpha(w^\rho_x(t,\cdot))|_{L^\infty((b_{\alpha},b_{\alpha+1})\cap
  (-\rho,\rho))} \quad \le C,\quad \mbox{for}\quad \alpha=0,\dots,N.
\end{equation}
Arguing as in \cite{im0} for instance, we deduce
that there exists a real number $\lambda_\rho$ 
with
$$|\lambda_\rho|\le C$$
and a constant $C_0$ (that depends on $\rho$) such that we have
\begin{equation}\label{eq::g35}
|w^\rho(t,x)+\lambda_\rho t|\le C_0.
\end{equation} 
Details are given in Appendix for the reader's convenience. 
 \medskip

\noindent {\sc Step 1.2. Getting global sub and super-solutions.}
Let us now define  the following function (up to some subsequence
$k_n\to +\infty$):
$$w^\rho_\infty(t,x)=\lim_{k_n\to +\infty} \left(w^\rho(t+k_n,x)+\lambda_\rho k_n\right)$$
which still satisfies (\ref{eq::g34}) and (\ref{eq::g35}). Then we
also define the two functions
$$\overline{w}^\rho_\infty(t,x)=\inf_{k\in \Z} \left(w^\rho_\infty(t+k,x)+k\lambda_\rho\right),\quad 
\underline{w}^\rho_\infty(t,x)=\sup_{k\in \Z}
\left(w^\rho_\infty(t+k,x)+k\lambda_\rho\right).$$
They still satisfy
\eqref{eq::g34} and \eqref{eq::g35} and are respectively a super- and
a sub-solution of the problem in $\R\times [-\rho,\rho]$. They satisfy
moreover that $\overline{w}^\rho_\infty(t,x) +\lambda_\rho t$ and
$\underline{w}^\rho_\infty(t,x) +\lambda_\rho t$ are $1$-periodic in
time, which implies the following bounds
\[|\overline{w}^\rho_\infty(t,x) -\overline{w}^\rho_\infty(0,x)  
+\lambda_\rho t|\le C,\quad |\underline{w}^\rho_\infty(t,x) -\underline{w}^\rho_\infty(0,x)  +\lambda_\rho t|\le C.\]

\noindent \textsc{Step 1.3: A new Cauchy problem on $(0,+\infty)$ and construction of a time periodic solution.}
We note that $\overline{w}^\rho_\infty + 2C_0 \ge
\underline{w}^\rho_\infty$, and we now solve the Cauchy problem with
new initial data $\underline{w}^\rho_\infty(0,x)$ instead of the zero
initial data and call $\tilde{w}^\rho$ the solution of this new Cauchy
problem.  From the comparison principle, we get
$$\underline{w}^\rho_\infty \le \tilde{w}^\rho \le \overline{w}^\rho_\infty + 2C_0.$$
In particular, 
$$\tilde{w}^\rho(1,x)\ge \underline{w}^\rho_\infty(1,x) \ge \tilde{w}^\rho(0,x)-\lambda_\rho.$$
This implies, by comparison, that
\begin{equation}\label{eq::g36}
\tilde{w}^\rho(k+1,x) \ge \tilde{w}^\rho(k,x) - \lambda_\rho.
\end{equation}
Moreover $\tilde{w}^\rho$ still satisfies (\ref{eq::g34}) (indeed  with the same constant 
because, by construction, this is also the case for $\underline{w}^\rho_\infty$).
We now define (up to some subsequence $k_n\to +\infty$):
$$\tilde{w}^\rho_\infty(t,x)=\lim_{k_n\to +\infty} \left(\tilde{w}^\rho(t+k_n,x)+\lambda_\rho k_n\right)$$
which, because of (\ref{eq::g36}) and the fact that $\tilde{w}^\rho(t,x)+\lambda_\rho t$ is bounded, satisfies
$$\tilde{w}^\rho_\infty(k+1,x)+\lambda=\tilde{w}^\rho_\infty(k,x)$$
and then $\tilde{w}^\rho_\infty(t,x)+\lambda_\rho t$ is $1$-periodic in time. Moreover $\tilde{w}^\rho_\infty$ is still a solution of the Cauchy problem
 and satisfies (\ref{eq::g34}). We define
$$w^\rho:=\tilde{w}^\rho_\infty + \lambda_\rho t$$
which satisfies (\ref{eq:trunc-cor}) and then provides the analogue of
the function given in Proposition~\ref{prop:cor-trunc}.

\paragraph{Step 2. Contruction of $w$ on $\R$.}
The result of Theorem \ref{thm:corrector} still holds true for
$$w=\lim_{\rho\to +\infty} \left(w^\rho-w^\rho(0,0)\right)$$
which is globally Lipschitz continuous in space and time and satisfies
(\ref{eq::g34}) with $\rho=+\infty$, and
$$\bar A =\lim_{\rho\to +\infty} \lambda_\rho. \qedhere $$
\end{proof}
\begin{proof}[Proof of \eqref{eq:N1} from Theorem~\ref{thm:conv-time}] 
 We recall that $\bar H_L=\bar H_0$ and
  $\bar H_R=\bar H_1$ and set $a=a_1$ and (up to translation) $b_1=0$.

\paragraph{Step 1: The convex case: identification of $\bar A$.} ~ \\

\noindent \textsc{Step 1.1: A convex subcase.}
We first work in the particular case where both $\bar H_\alpha$ for
$\alpha=L,R$ are convex and given by the Legendre-Fenchel transform of
convex Lagrangians $L_\alpha$ which satisfy for some compact interval
$I_\alpha$:
\begin{equation}\label{eq::g45}
L_\alpha(p)= \left\{\begin{array}{ll}
\mbox{finite} &\quad \mbox{if} \quad q\in I_\alpha,\\
+\infty &\quad \mbox{if} \quad q\not\in I_\alpha.
\end{array}\right.
\end{equation}
Then it is known (see for instance the section on optimal control in \cite{im})
that the solution of \eqref{eq:hj-eps} on the time interval $[0,\varepsilon\tau_1)$,  is given by
\begin{equation}\label{eq::g40}
u^\varepsilon(t,x)=\inf_{y\in \R} \left(\inf_{X\in S_{0,y;t,x}}
  \left\{u^\varepsilon(0,X(0)) + \int_0^t
    L_\varepsilon(s,X(s),\dot{X}(s))\, ds \right\} \right) 
\end{equation}
with
$$L_\varepsilon(s,x,p)=\left\{\begin{array}{ll}
\bar H_L^*(p) &\quad \mbox{if}\quad x<0,\\
\bar H_R^*(p) &\quad \mbox{if}\quad x>0,\\
\min(-a(\varepsilon^{-1}s), \displaystyle \min_{\alpha=L,R} L_\alpha(0)) &\quad \mbox{if}\quad x=0,\\
\end{array}\right.$$
and for $s<t$, the following set of trajectories:
$$S_{s,y;t,x} = \left\{X\in \mbox{Lip}((s,t);\R), \quad X(s)=y,\quad X(t)=x\right\}.$$
Combining this formula with the other one on the time interval
$[\varepsilon\tau_1,\varepsilon \tau_2)$, 
and iterating on all necessary intervals, we get that 
(\ref{eq::g40}) is a representation formula of the solution $u^\varepsilon$ of \eqref{eq:hj-eps} for  all $t>0$.
We also know (see the section on optimal control in  \cite{im}), that the optimal trajectories from $(0,y)$ to $(t_0,x_0)$
intersect the axis $x=0$ at most on a time interval $[t_1^\varepsilon, t_2^\varepsilon]$ with $0\le t_1^\varepsilon\le t_2^\varepsilon\le t_0$.
If this interval is not empty, then we have $t^\varepsilon_i\to t^0_i$ for $i=1,2$ and we can easily pass to the limit in 
(\ref{eq::g40}). In general,  $u^\varepsilon$ converges to $u^0$ given by the formula
$$u^0(t,x)=\inf_{y\in \R} \left(\inf_{X\in S_{0,y;t,x}}
  \left\{u^0(0,X(0)) + \int_x^t  L_0(s,X(s),\dot{X}(s)) \, ds \right\}\right)$$
with
$$L_0(s,x,p)=\left\{\begin{array}{ll}
\bar H_L^*(p) &\quad \mbox{if}\quad x<0,\\
\bar H_R^*(p) &\quad \mbox{if}\quad x>0,\\
\min(-\langle a\rangle, \displaystyle \min_{\alpha=L,R} L_\alpha(0)) &\quad \mbox{if}\quad x=0,\\
\end{array}\right.$$
and from \cite{im} we see that $u^0$ is the unique solution of
\eqref{eq:hj-homog}-\eqref{eq:ic} with $\bar A = \langle a\rangle$.
\medskip

\noindent \textsc{Step 1.2: The general convex case.}  The general
case of convex Hamiltonians is recovered, because for Lipschitz
continuous initial data $u_0$, we know that the solution is globally
Lipschitz continuous. Therefore, we can always modify the Hamiltonians
$\bar H_\alpha$ outside some compact intervals such that the modified
Hamiltonians satisfy (\ref{eq::g45}).

\paragraph{Step 2: General quasi-convex Hamiltonians: identification
  of $\bar A$.} ~ \\

\noindent \textsc{Step 2.1: Sub-Solution inequality.} From Theorem 2.10 in
\cite{im}, we know that $w(t,0)$, as a function of time only,
satisfies in the viscosity sense
$$w_t(t,0) + a(t)\le \bar A \quad \mbox{for all}\quad
t\notin\bigcup_{i=1,\dots,K+1}\tau_i + \Z.$$
Using the $1$-periodicity in time of $w$, we see that the integration in time on one period implies:
\begin{equation}\label{eq::g66}
\langle a\rangle \le  \bar A.
\end{equation}

\noindent \textsc{Step 2.2: Super-solution inequality.}
Recall that $\bar A\ge \ \langle a\rangle\ \ge A_0:=\displaystyle
\max_{\alpha=L,R}\min (\bar H_\alpha)$. If $\bar A= A_0$, then
obviously, we get $\bar A= \langle a\rangle$. Hence, it remains to
treat the case $\bar A>A_0$. \medskip

\noindent \textsc{Step 2.3: Construction of a super-solution for
  $x\not=0$.}  Recall that $\bar p_R$ and $\bar p_L$ are defined in
\eqref{def:pR} and \eqref{def:pL} and the minimum of $\bar H_\alpha$
is reached for $\bar p^0_\alpha$, $\alpha = R,L$. Since $\bar A >
A_0$, there exists some $\delta>0$ such that
\begin{equation}\label{eq::g60}
\bar p_L +2\delta < \bar p^0_L\quad \quad \mbox{and}\quad \bar p^0_R<\bar p_R-2\delta.
\end{equation}
If $w$ denotes a global corrector given by Lemma \ref{lem:cor-ok} (or
Theorem~\ref{thm:corrector}), let us define
$$\underline{w}_R(t,x)=\inf_{h\ge 0} \left(w(t,x+h)-\bar p^0_R h\right)\quad \mbox{for}\quad x\ge 0,$$
and similarly
$$\underline{w}_L(t,x)=\inf_{h\ge 0} \left(w(t,x-h)+\bar p^0_L h\right)\quad \mbox{for}\quad x\le 0.$$
From (\ref{eq:w-lb-slope}) with $\rho_\delta=0$, we deduce that we have for some $\bar h\ge 0$
$$w(t,x)\ge \underline{w}_R(t,x)=w(t,x+\bar h)-\bar p^0_R \bar h\ge w(t,x)+(\bar p_R-\delta-\bar p^0_R)\bar h - C_\delta.$$
From (\ref{eq::g60}), this implies
\begin{equation}\label{eq::re1}
0\le \bar h\le C_\delta/\delta
\end{equation}
and using the fact that $w$ is globally Lipschitz continuous, we deduce that for $\alpha=R$:
\begin{equation}\label{eq::g61}
w\ge \underline{w}_\alpha\ge w -C_1.
\end{equation}
Moreover, by constrution (as an infimum of (globally Lipschitz continuous)
super-solutions), $\underline{w}_R$ is a (globally Lipschitz continuous)
super-solution of the problem in $\R\times (0,+\infty)$.  We also have
for $x=y+z$ with $z\ge 0$:
\begin{align*}
\underline{w}_R(t,x)-\underline{w}_R(t,y) &=w(t,x+\bar h)-\bar p^0_R \bar h - \underline{w}_R(t,y)\\
&\ge w(t,x+\bar h)-\bar p^0_R \bar h -\left(w(t,y+\bar h +z)-\bar p^0_R (\bar h + z)\right)\\
&\ge \bar p^0_R  z = \bar p^0_R (x-y)
\end{align*}
which shows that
\begin{equation}\label{eq::g62}
(\underline{w}_R)_x\ge \bar p^0_R.
\end{equation}
Similarly (and we can also use a symmetry argument to see it), we get
that $\underline{w}_L$ is a (globally Lipschitz continuous) super-solution in
$\R\times (-\infty,0)$, it satisfies (\ref{eq::g61}) with $\alpha =L$ and
\begin{equation}\label{eq::g63}
(\underline{w}_L)_x\le \bar p^0_L.
\end{equation}
We now define
\begin{equation}\label{eq::g70}
\underline{w}(t,x)=\left\{\begin{array}{ll}
\underline{w}_R(t,x) &\quad \mbox{if}\quad x>0,\\
\underline{w}_L(t,x) &\quad \mbox{if}\quad x<0,\\
\min(\underline{w}_L(t,0), \underline{w}_R(t,0))&\quad \mbox{if}\quad x=0
\end{array}\right.
\end{equation}
which by constrution is lower semi-continuous and satisfies
\eqref{eq::g61}, and is a super-solution for $x\not=0$.

\noindent \textsc{Step 2.4: Checking the super-solution property at $x=0$.}
Let $\varphi$ be a test function touching $\underline{w}$ from below
at $(t_0,0)$ with $t_0\notin\bigcup_{i=1,\dots,K+1}
  \tau_i +\Z$.  We want to check that
\begin{equation}\label{eq::g64}
\varphi_t(t_0,0) + F_{a(t_0)}(\varphi_x(t_0,0^-),\varphi_x(t_0,0^+))\ge \bar A.
\end{equation}
We may assume that
\[\underline{w}(t_0,0)=\underline{w}_R(t_0,0)\] since the case
$\underline{w}(t_0,0)=\underline{w}_L(t_0,0)$ is completely similar.
Let $\bar h \ge 0$ be such that
\[\underline{w}_R(t_0,0) =w(t_0,0+\bar h)-\bar
p^0_R \bar h.\]

We distinguish two cases. Assume first that $\bar h>0$.
Then we have for all $h\ge 0$
$$\varphi(t,0)\le w(t,0+h)-\bar p^0_R h$$
with equality for $(t,h)=(t_0,\bar h)$.
This implies the viscosity inequality
$$\varphi_t(t_0,0) + \bar H_R(\bar p^0_R) \geq \bar A$$
which implies (\ref{eq::g64}), because
$F_{a(t_0)}(\varphi_x(t_0,0^-),\varphi_x(t_0,0^+)) \geq a(t_0)\geq
A_0\geq \min \bar H_R = \bar H_R(\bar p^0_R)$.

Assume now that $\bar h=0$.
Then we have $\varphi\le \underline{w}\le w$ with equality at
$(t_0,0)$. This implies immediately (\ref{eq::g64}).
\medskip

\noindent \textsc{Step 2.5: Conclusion.}
We deduce that $\underline{w}$ is a super-solution on $\R\times \R$.
Now let us consider a $C^1$ function $\psi(t)$ such that
$$\psi (t)\le \underline{w}(t,0)$$
with equality at $t=t_0$. Because of (\ref{eq::g62}) and (\ref{eq::g63}), we see that 
$$\varphi(t,x)=\psi(t) + \bar p^0_L x 1_{\left\{x<0\right\}} + \bar p^0_R x 1_{\left\{x>0\right\}}$$
satisfies
$$\varphi\le \underline{w}$$
with equality at $(t_0,0)$. This implies (\ref{eq::g64}), and at almost every point $t_0$ 
where the Lipschitz continuous function $\underline{w}(t,0)$ is differentiable, we have
$$\underline{w}_t(t_0,0) + a(t_0) \ge \bar A.$$
Because $w$ is $1$-periodic in time, we get after an integration on one period,
\begin{equation}\label{eq::re2}
\langle a \rangle\ge \bar A.
\end{equation}
Together with (\ref{eq::g66}), we deduce that $\langle a \rangle =\bar
A$, which is the desired result, for $N=1$.
\end{proof}

\begin{proof}[Proof of  \eqref{eq:Nge1} in Theorem~\ref{thm:conv-time}].  
We simply remark, using the sub-solution viscosity inequality at each
  junction condition, that for $\alpha=1,\dots,N$, 
\[\bar A \ge \langle a_\alpha \rangle\] 
which is the desired result. This achieves the proof of  \eqref{eq:N1}
and \eqref{eq:Nge1}. 
\end{proof}

\begin{proof}[Proof of the monotonicity of $\bar A$ in Theorem~\ref{thm:conv-time}]

Let $N\ge 2$, and for $i=c,d$, let us assume some given $b_1^i<\dots < b_N^i$.
and let us call $w^i$ a global corrector given by Lemma \ref{lem:cor-ok} (or
Theorem~\ref{thm:corrector}) with $\lambda=\bar A^i$ and $H=H^i$ with
$i=c,d$ respectively. 

\noindent We call $\ell_\alpha^i = b_{\alpha+1}^i -b_\alpha^i>0$ and assume that
$$0<\ell_{\alpha_0}^d - \ell_{\alpha_0}^c=:\delta_{\alpha_0}  \quad \mbox{for some}\quad \alpha_0 \in \left\{1,\dots,N-1\right\}$$
and
$$\ell_{\alpha}^d  = \ell_{\alpha}^c \quad \mbox{for all}\quad \alpha \in \left\{1,\dots,N-1\right\}\backslash \left\{\alpha_0\right\}.$$
Calling $\bar p^0_{\alpha_0}$ a point of global minimum of $\bar H_{\alpha_0}$, we define
$$\tilde{w}^d(t,x)=\left\{\begin{array}{ll}
w^c(t,x-b_{\alpha_0}^d+b_{\alpha_0}^c) & \quad \mbox{if}\quad x\le b_{\alpha_0}^d + \ell_{\alpha_0}^c/2=:x_-,\\
w^c(t,x_--b_{\alpha_0}^d+b_{\alpha_0}^c) +\bar p^0_{\alpha_0}(x-x_-)& \quad \mbox{if}\quad x_- \le x \le x_+,\\
w^c(t,x-b_{\alpha_0+1}^d+b_{\alpha_0+1}^c) +\bar p^0_{\alpha_0}(x_+-x_-)& \quad \mbox{if}\quad x\ge b_{\alpha_0+1}^d-\ell_{\alpha_0}^c/2=:x_+.
\end{array}\right.$$
Recall that $w^i$ for $i=c,d$, are globally Lipschitz continuous in space and
time. This shows that $\tilde{w}^d$ is also Lipschitz continuous in space and
time by construction, because it is continuous at
$x=x_-,x_+$. Moreover $\tilde{w}^d$ is $1$-periodic in time.  We now
want to check that $\tilde{w}^d$ is a sub-solution of the equation
satisfied by $w^d$ with $\bar A^c$ on the right hand side instead of
$\bar A^d$. We only have to check it for all times $\bar t\not\in
\left\{\tau_0,\dots, \tau_K\right\}$ and $\bar x\in
\left[x_-,x_+\right]$, \textit{i.e.} we have to show that
\begin{equation}\label{eq::jr0}
\tilde{w}^d_t(\bar t,\bar x) + \bar H_{\alpha_0}(\tilde{w}^d_x(\bar t,\bar x))\le \bar A^c \quad
\mbox{for all}\quad \bar x\in [x_-,x_+].
\end{equation}
Assume that $\varphi$ is a test function touching $\tilde{w}^d$ from
above at such a point $(\bar t,\bar x)$ with $\bar x\in [x_-,x_+]$.
Then this implies in particular that $\psi(t,x)=\varphi(t,x)-\bar
p_{\alpha_0}^0(x-x_-)$ touches $\tilde{w}^d(\cdot,x_-)=w^c(\cdot,x_0)$
from above at time $\bar t$ with $x_0=b_{\alpha_0}^c +
\ell_{\alpha_0}^c/2$.  Recall that $w^c$ is solution of
$$w^c_t + \bar H_{\alpha_0}(w^c_x)=\bar A^c \quad \mbox{on}\quad (b^c_{\alpha_0},b^c_{\alpha_0+1}).$$
From the characterization of sub-solutions (see Theorem 2.10 in \cite{im}), we  then deduce that
$$\psi_t(\bar t)+\bar H_{\alpha_0}(\bar p^0_{\alpha_0}) \le \bar  A^c.$$
If $\bar x \in (x_-,x_+)$, then we have $\varphi_x(\bar t,\bar x)=\bar p^0_{\alpha^0}$. This means in particular
\begin{equation}\label{eq::jr1}
\varphi_t + \bar H_{\alpha_0}(\varphi_x) \le \bar A^c \quad \mbox{at}\quad (\bar t,\bar x) \quad \mbox{if}\quad \bar x \in (x_-,x_+).
\end{equation}
Using now (\ref{eq::jr1}), and still from Theorem 2.10 in \cite{im}, we deduce that we have in the viscosity sense
\begin{equation}\label{eq::jr2}
\tilde{w}^d_t(\bar t,\bar x) + \max\left(\bar H^-_{\alpha_0}(\tilde{w}^d_x(\bar t,\bar x^+)),\bar H^+_{\alpha_0}(\tilde{w}^d_x(\bar t,\bar x^-))\right)\le \bar A^c \quad
\mbox{for}\quad \bar x = x_\pm.
\end{equation}
Therefore (\ref{eq::jr1}) and (\ref{eq::jr2}) imply  (\ref{eq::jr0}).

Let us now call $H^d$ the Hamiltonian in assumption \textbf{(C1)}
constructed with the points $\{ b_\alpha^d \}_{\alpha=1, \dots, N}$. Then we have
\[\tilde{w}^d_t + H^d(t,x,\tilde{w}^d_x)\le \bar A^c \quad \mbox{for all}\quad t\not\in \left\{\tau_0,\dots \tau_K\right\}.\]
Note that the proof of Theorem \ref{thm:bar-A} is unchanged for the
present problem, and then Theorem \ref{thm:bar-A} still holds true.
This shows that
\begin{equation}\label{eq::jr3}
\bar A^d \le \bar A^c
\end{equation}
which shows the expected monotonicity. The proof is now complete. 
\end{proof}
\begin{remark}
  Note that, in the previous proof, it would also be possible to
  compare the sub-solution given by the restriction of $\tilde{w}^d$
  on some interval $[-\rho,\rho]$ with $\rho>0$ large enough (see
   \cite[Proposition 2.19]{im}), with the approximation $w^{d,\rho}$
  of $w^d$ on $[-\rho,\rho]$ with $\bar A^d\ge \bar A^d_\rho \to \bar
  A^d$ as $\rho\to +\infty$. The comparison for large times would imply
  $\bar A^d_\rho \le \bar A^c$.  As $\rho\to +\infty$, this would give
  the same conclusion (\ref{eq::jr3}).
\end{remark}

\begin{proof}[Proof of \eqref{eq:critical} in Theorem~\ref{thm:conv-time}]
Let $w$ be a global corrector associated to  $\bar A$.

Recall that
\begin{equation}\label{eq::jr10}
\bar A \ge \bar A_0:= \max_{\alpha=1,\dots,N} \langle a_\alpha\rangle
\ge  A_0:=\max_{\alpha=1,\dots,N} A_0^\alpha 
\quad \mbox{with}\quad A^\alpha_0=\max_{\beta=\alpha-1,\alpha} (\min \bar H_\beta).
\end{equation}
Our goal is to prove that $\bar A=\bar A_0$ when all the distances
$\ell_\alpha$ are large enough, \textit{i.e.} (\ref{eq:critical}).
Let us assume that
$$\bar A > \bar A_0.$$

\noindent \textsc{Step 1: Considering another corrector with the same $\langle \widehat a_\alpha \rangle = \bar A_0$.} 
Let $\mu_\alpha\ge 0$ such that
$$\widehat a_\alpha = \mu_\alpha + a_\alpha \quad \mbox{with}\quad \langle \widehat a_\alpha\rangle = \bar A_0 
\quad \mbox{for all}\quad \alpha=1,\dots,N.$$
Let us call $\widehat w$ the corresponding corrector with associated constant $\widehat A$.
Then Theorem \ref{thm:bar-A} (still valid here) implies that 
$$\widehat A\ge \bar A > \bar A_0.$$
We also split the set $\left\{1,\dots,N\right\}$ into two disjoint sets
$$I_0=\left\{\alpha\in \left\{1,\dots,N\right\},\quad \bar A_0 = A^\alpha_0\right\}$$
and
$$I_1=\left\{\alpha\in \left\{1,\dots,N\right\},\quad \bar A_0 > A^\alpha_0\right\}.$$
Note that by (\ref{eq::jr10}), if $\alpha\in I_0$, then $\langle a_\alpha\rangle = A^\alpha_0$, and then by {\bf (C3)}, we have
$a_\alpha(t)=const=A^\alpha_0$ for all time $t\in \R$. For later use, we then claim that $\widehat w$ satisfies
\begin{equation}\label{eq::jr11}
\widehat w_t(t,x)+ \max(\bar H_\alpha^-(\widehat w_x(t,x^+)), \bar
H_{\alpha-1}^+(\widehat w_x(t,x^-)))= \hat A \quad \mbox{for all}\quad (t,x)\in \R\times \left\{b_\alpha\right\}
\end{equation}
and not only for $t\in \R\backslash \left(\Z +
  \left\{\tau_0,\dots,\tau_K\right\}\right)$.  Let us show it for
sub-solutions (the proof being similar for super-solutions).  Let
$\varphi$ be a test function touching $\widehat w$ from above at some
point $(\bar t,\bar x)=(j+\tau_k,b_\alpha)$ for some $j\in\Z$, $k\in
\left\{0,\dots,K\right\}$. Assume also that the contact between
$\varphi$ and $\widehat w$ only holds at that point $(\bar t,\bar x)$.
The proof is a variant of a standard argument. For $\eta>0$, let us
consider the test function
$$\varphi_\eta(t,x)=\varphi(t,x)+\frac{\eta}{\bar t-t} \quad \mbox{for}\quad t\in (-\infty,\bar t).$$
Then  for $r>0$ fixed, we have
$$\inf_{(t,x)\in \overline{B_r(\bar t,\bar x)},\ t<\bar t} (\varphi_\eta-\widehat w)(t,x)=(\varphi_\eta-\widehat w)(t_\eta,x_\eta)$$
with
\[\left\{\begin{array}{l}
P_\eta=(t_\eta,x_\eta)\to (\bar t,\bar x)=\bar P \quad \text{ as } \quad
\eta\to 0, \medskip \\
\varphi_t(\bar P) \le \limsup_{\eta \to 0}(\varphi_\eta)_t(P_\eta).
\end{array}\right.\]
This implies that $\widehat w$ is a relaxed viscosity sub-solution at
$(\bar t,\bar x)$ in the sense of Definition~2.2 in \cite{im}.  By
 \cite[Proposition 2.6]{im}, we deduce that $\widehat w$ is also a
standard (\textit{i.e.} not relaxed) viscosity sub-solution at $(\bar
t,\bar x)$.  Finally we get (\ref{eq::jr11}). \medskip

\noindent \textsc{Step 2: Defining a space super-solution.} 
Let us define the function
\[M(x)=\inf_{t\in \R} \widehat w(t,x).\]
Because $\widehat w$ is globally Lispschitz continuous, we deduce that $M$ is also globally Lipschitz continuous.
Moreover we have the following viscosity super-solution  inequality
\[\bar H_{\alpha}(M_x(x))\ge \widehat A > \bar A_0\quad \mbox{for all}\quad x\in (b_{\alpha},b_{\alpha+1}),
\quad \mbox{for all}\quad \alpha=0,\dots,N.\]
Let us call for $\alpha=0,\dots,N$:
\[\bar p_{\alpha,R} = \min E_{\alpha,R} \quad \mbox{with}\quad E_{\alpha,R}=\left\{p\in\R,\quad \bar H^+_{\alpha}(p)=\bar H_{\alpha}(p)=\bar A_0\right\},\]
\[\bar p_{\alpha,L} = \max E_{\alpha,L} \quad \mbox{with}\quad E_{\alpha,L}=\left\{p\in\R,\quad \bar H^-_{\alpha}(p)=\bar H_{\alpha}(p)=\bar A_0\right\}.\]
Let us now consider $\alpha =0,\dots, N$ and two points $x_-<x_+$ with $x_\pm \in (b_\alpha,b_{\alpha+1})$.
Let us assume that there is a test function $\varphi^\pm$ touching $M$ from below at $x_\pm$. Then we have
$$\bar H_\alpha(\varphi^\pm_x(x_\pm))\ge \widehat A>\bar A_0$$
with 
$$\varphi^\pm_x(x_\pm)\ge \bar p_{\alpha,R}  \quad \mbox{or}\quad \varphi^\pm_x(x_\pm)\le \bar p_{\alpha,L}.$$
Moreover, if $\bar A_0> \min \bar H_\alpha$, then we have
$$\bar p_{\alpha,L}<\bar p^0_\alpha < \bar p_{\alpha,R}$$
for any $\bar p^0_{\alpha}$ which is a point of global minimum of $\bar H_{\alpha}$.\medskip

\noindent \textsc{Step 3: A property of the space super-solution.}  We
now claim that the following case is impossible:
$$p^-:=\varphi^-_x(x_-)< \varphi^+_x(x_+)=:p^+ \quad \mbox{and}\quad \inf_{[p^-,p^+]}\bar H_\alpha < \widehat A.$$
If it is the case, then let $\bar p\in (p^-,p^+)$ such that $\bar
H_\alpha(\bar p)<\widehat A$.  Therefore the geometry of the graph of the
function $M$ implies that
$$\inf_{x\in [x_-,x_+]} (M(x)- x \bar p) = M(\bar x)- \bar x \bar
p\quad \mbox{for some}\quad \bar x\in (x_-,x_+)$$
and then we have the viscosity super-solution inequality at $\bar x$:
$$\bar H_\alpha(\bar p)\ge \widehat A$$
which leads to a contradiction.  Therefore (in all cases $\bar A_0
>\min \bar H_\alpha$ or $\bar A_0=\min \bar H_\alpha$), it is possible
to check that there is a point $\bar x_\alpha\in
[b_{\alpha},b_{\alpha+1}]$ such that the Lipschitz continuous function $M$
satisfies in the viscosity sense
$$\left\{\begin{array}{ll}
M_x\ge \bar p_{\alpha,R} &\quad \mbox{in}\quad (b_\alpha,\bar
x_\alpha),\\ - M_x \ge - \bar p_{\alpha,L} &\quad \mbox{in}\quad (\bar
x_\alpha,b_{\alpha+1}).
\end{array}\right.$$ Moreover from Theorem \ref{thm:corrector} ii)
(see Lemma \ref{lem:cor-ok}), we deduce from $\widehat A>\max (\min
H_N, \min H_0)$ that
$$\bar x_{N}=+\infty \quad \mbox{and}\quad \bar x_0=-\infty.$$
In particular, we deduce that there exists at least one $\alpha_0\in
\left\{1,\dots,N\right\}$ such that
\begin{equation}\label{eq::jr5} \bar x_{\alpha_0} - b_{\alpha_0}\ge
\ell_{\alpha_0}/2\quad \mbox{and}\quad b_{\alpha_0} - \bar
x_{\alpha_0-1}\ge \ell_{\alpha_0-1}/2.
\end{equation}

\noindent \textsc{Step 4: The case $\alpha_0\in I_0$.} 
In this case, we see that there exists a time $\bar t$ such that the test function
$$\varphi(t,x)=\left\{\begin{array}{ll}
\bar p_{\alpha_0,R}(x-b_{\alpha_0}) & \quad \mbox{for}\quad x\ge b_{\alpha_0},\\
\bar p_{\alpha_0-1,L}(x-b_{\alpha_0}) & \quad \mbox{for}\quad x\le b_{\alpha_0}
\end{array}\right.$$
is a test function touching (up to some additive constant) $\widehat w$  from below at $(\bar t,b_{\alpha_0})$. 
By (\ref{eq::jr11}), this implies
$$\bar A_0=\max(\bar H_{\alpha_0}(\bar p_{\alpha_0,R}),\bar H_{\alpha_0-1}(\bar p_{\alpha_0-1,L})) \ge \widehat A\ge \bar A.$$
Contradiction. \medskip

\noindent \textsc{Step 5: Consequences on $\widehat w$.} 
From the fact that $\widehat w$ is $1$-periodic in time and $C$-Lipschitz continuous in time (with a constant $C$ 
depending only on $\displaystyle \max_{\alpha=1,\dots,N}\|\widehat a_\alpha\|_\infty$ and the $\bar H_\alpha$'s, see (\ref{eq::g30})),
we deduce that we have
\begin{equation}\label{eq::jr6}
\left\{\begin{array}{ll}
\widehat w(t,x+h)-\widehat w(t,x)\ge \bar p_{\alpha,R} h -2C &\quad \mbox{for}\quad x,x+h \in (b_\alpha,\bar x_\alpha),\\
\widehat w(t,x-h)-\widehat w(t,x)\ge -\bar p_{\alpha,R} h -2C &\quad \mbox{for}\quad x,x+h \in (\bar x_\alpha,b_{\alpha+1}).
\end{array}\right.
\end{equation}

\noindent \textsc{Step 6: The case $\alpha_0\in I_1$: definition of a space-time  super-solution.} 
Proceeding similarly to Step 3 of the proof of \eqref{eq:N1},
we define
$$\underline{\widehat w}_{\alpha_0,R}(t,x)=\inf_{\frac{\ell_{\alpha_0}}{4}\ge h\ge 0} \left(\widehat w(t,x+h)-\bar p^0_{\alpha_0}h\right)\quad \mbox{for}\quad b_{\alpha_0}\le x \le b_{\alpha_0} +\frac{\ell_{\alpha_0}}{4}$$
and
$$\underline{\widehat w}_{(\alpha_0-1),L}(t,x)=\inf_{\frac{\ell_{(\alpha_0-1)}}{4}\ge  h\ge 0} \left(\widehat w(t,x-h)+\bar p^0_{\alpha_0-1}h\right)\quad \mbox{for}\quad    b_{\alpha_0} - \frac{\ell_{\alpha_0-1}}{4} \le x\le b_{\alpha_0}.$$
From (\ref{eq::jr6}), we deduce that we have for some $\bar h\in [0,\frac{\ell_{\alpha_0}}{4}]$
$$\widehat w(t,x)\ge \underline{\widehat w}_{\alpha_0,R}(t,x) = \widehat w(t,x+\bar h)-\bar p^0_{\alpha_0}\bar h
\ge \widehat w (t,x)+(\bar p_{\alpha_0,R}-\bar p^0_{\alpha_0}) \bar h -2C$$
which implies
$$0\le  \bar h\le \frac{2C}{\bar p_{\alpha_0,R}-\bar p^0_{\alpha_0}}.$$
As in Step 3 of the proof of \eqref{eq:N1}, if
\begin{equation}\label{eq::jr7}
\frac{\ell_{\alpha_0}}{4} >  \frac{2C}{\bar p_{\alpha_0,R}-\bar p^0_{\alpha_0}}
\end{equation}
this implies that $\underline{\widehat w}_{\alpha_0,R}$ is a super-solution for $x\in (b_{\alpha_0},b_{\alpha_0}+\frac{\ell_{\alpha_0}}{4})$.
Similarly, if 
\begin{equation}\label{eq::jr8}
\frac{\ell_{\alpha_0-1}}{4} >  \frac{2C}{\bar p^0_{\alpha_0-1}-\bar p_{\alpha_0-1,L}}
\end{equation}
then $\underline{\widehat w}_{\alpha_0-1,L}$ is a super-solution for $x\in (b_{\alpha_0}-\frac{\ell_{\alpha_0-1}}{4},b_{\alpha_0})$.
We now define
$$\underline{\widehat w}(t,x)=\left\{\begin{array}{ll}
\underline{\widehat w}_{\alpha_0,R}(t,x) & \quad \mbox{if}\quad x\in (b_{\alpha_0},b_{\alpha_0}+\frac{\ell_{\alpha_0}}{4}),\\
\underline{\widehat w}_{\alpha_0-1,L}(t,x) & \quad \mbox{if}\quad x\in (b_{\alpha_0}-\frac{\ell_{\alpha_0-1}}{4},b_{\alpha_0}),\\
\min (\underline{\widehat w}_{\alpha_0-1,L}(t,b_{\alpha_0}), \ \underline{\widehat w}_{\alpha_0,R}(t,b_{\alpha_0})) & \quad \mbox{if}\quad x= b_{\alpha_0}.
\end{array}\right.$$
Then as in Steps 4 and 5 of the proof \eqref{eq:N1}, we deduce that $\underline{\widehat w}$ is a super-solution up to the junction point $x=b_{\alpha_0}$ and that
$$\bar A_0= \langle \widehat a_{\alpha_0}\rangle \ge \widehat A \ge \bar A.$$
Contradiction. \medskip

\noindent \textsc{Step 7: Conclusion.} 
If (\ref{eq::jr7}) and (\ref{eq::jr8}) hold true for any $\alpha_0\in I_1$, then we deduce that $\bar A \le \bar A_0$, which implies $\bar A = \bar A_0$.
This ends the proof of \eqref{eq:critical} in Theorem~\ref{thm:conv-time}.
\end{proof}

\begin{proof}[Proof of \eqref{eq:limit} in Theorem~\ref{thm:conv-time}]
Let us consider 
$$\bar a(t)=\max_{\alpha=1,\dots,N} a_\alpha(t),$$ 
and $(w,\bar{\bar{A}})$ a solution (given by Theorem \ref{thm:corrector} (see also Lemma \ref{lem:cor-ok})) of
$$\left\{\begin{array}{ll}
w_t + \bar H_0(w_x) = \bar{\bar{A}}& \quad \mbox{if}\quad x<0,\\
w_t + \bar H_N(w_x) = \bar{\bar{A}}& \quad \mbox{if}\quad x>0,\\
w_t(t,0)+\max (\bar a(t),\bar H_N^-(w_x(t,0^+)), \bar
H_0^+(w_x(t,0^-))) = \bar{\bar{A}} & \quad \mbox{if}\quad x=0, \medskip\\
w \quad \mbox{is $1$-periodic with respect to $t$}.
\end{array}\right.$$
From Theorem~\ref{thm:conv-time}, we also know that
$$\bar{\bar{A}} = \langle \bar a \rangle.$$
For $N\ge 2$, we set $\ell=(\ell_1,\dots,\ell_{N-1})\in (0,+\infty)^{N-1}$ and consider $b_0=-\infty<b_1<\dots<b_N<b_{N+1}=+\infty$
with
$$\ell_\alpha=b_{\alpha+1}-b_\alpha \quad \mbox{for}\quad \alpha=1,\dots N-1.$$
We now call $(w^\ell,\bar A^\ell)$ a global corrector given by Theorem \ref{thm:corrector} (see also Lemma \ref{lem:cor-ok}).
The remaining of the proof is divided into several steps. \medskip

\noindent \textsc{Step 1: Bound from above on $\bar A^\ell$.}
We define
$$\tilde{w}(t,x)=\left\{\begin{array}{ll}
w(t,x-b_{1}) & \quad \mbox{if}\quad x\le b_{1},\\
\\
\displaystyle w(t,0)+ \bar p^0_\alpha(x-b_\alpha)  +
\sum_{\beta=1,\dots,\alpha-1} \bar p^0_\beta (b_{\beta+1}-b_\beta) & 
\quad \mbox{if}\quad \left\{\begin{array}{l}
b_{\alpha} \le x\le b_{\alpha+1},\\
\alpha\in \left\{1,\dots,N-1\right\},\\
\end{array}\right.\\
\\
\displaystyle w(t,x-b_N)  + \sum_{\beta=1,\dots,N-1} \bar p^0_\beta (b_{\beta+1}-b_\beta) & \quad \mbox{if}\quad x\ge b_N.
\end{array}\right.$$
Proceeding  as in Step 1 of the proof of Theorem~\ref{thm:conv-time} ii),
it is then easy to check that $\tilde{w}$ is a sub-solution of the
equation satisfied by 
$w^\ell$ with $\bar{\bar{A}}$ on the right hand side instead of $\bar A^\ell$. Then Theorem \ref{thm:bar-A} implies that
\begin{equation}\label{eq::jr15}
\bar A^\ell \le \bar{\bar{A}} = \langle \bar a \rangle.
\end{equation}

\noindent \textsc{Step 2: Bound from below on $\bar A^\ell$.}
From Theorem 2.10 in \cite{im}, we deduce that we have in the viscosity sense (in time only)
$$w^\ell_t(t,b_\alpha)+ a_\alpha(t)\le \bar A^\ell \quad \mbox{for
  all}\quad t \notin  \cup_{k=0}^K \{ \tau_k + \Z\}.$$
Let us call
$$\underline{A}=\liminf_{\ell \to 0} \bar A^\ell.$$
We also know that $w^\ell$ is $1$-periodic and  globally Lipschitz continuous with a constant which is independent on $\ell$.
Therefore there exists a $1$-periodic and Lipschitz continuous function $g=g(t)$ such that
$$w^\ell(t,b_\alpha) \to g(t) \quad \mbox{for all}\quad \alpha=1,\dots,N,\quad \mbox{as}\quad \ell\to 0.$$
The stability of viscosity solutions implies in the viscosity sense
$$g'(t) + a_\alpha(t) \le \underline{A},\quad \mbox{for all}\quad
\alpha=1,\dots,N,\quad \mbox{for all} \quad t \notin  \cup_{k=0}^K \{ \tau_k + \Z\}.$$
Because $g$ is Lipschitz continuous, this inequality also holds for almost every $t\in\R$. This implies
$$g'(t) + \bar a(t)\le  \underline{A} \quad \mbox{for a.e.}\quad t\in\R.$$
An integration on one period gives
\begin{equation}\label{eq::jr16}
\langle \bar a \rangle \le \underline{A}.
\end{equation}

\noindent \textsc{Step 3: Conclusion.}  Combining (\ref{eq::jr15})
with (\ref{eq::jr16}) finally yields that $\bar A^\ell \to \langle \bar
a \rangle$ as $\ell\to 0$.  The proof of \eqref{eq:limit} in
Theorem~\ref{thm:conv-time} is now complete.
\end{proof}

\paragraph{Acknowledgements.}
The authors thank the referees for their valuable comments.  The
authors thank Y. Achdou, K. Han and N. Tchou for stimulating
discussions. The authors thank N. Seguin for interesting discussions
on green waves. C.~I.  thanks Giga for the interesting discussions
they had together and for drawing his attention towards papers such as
\cite{hamamuki}.  R.~M. thanks G. Costeseque for his comments on
traffic lights modeling and his simulations which inspired certain
complementary results. The second and third authors are partially
supported by ANR-12-BS01-0008-01 HJnet project.


\appendix 

\section{Proofs of some technical results}

\subsection{The case $\bar x \neq 0$ in the proof of convergence}

\begin{proof}[The case $\bar x \neq 0$ in the proof of
    Theorem~\ref{thm:conv}]

    We only deal with the subcase $\bar x >0$ since the subcase $\bar x<0$ is
    treated in the same way.  Reducing $\overline r$ if necessary, we
    may assume that $B_{\overline r}(\overline t,\overline x)$ is
    compactly embedded in the set
    $\left\{(t,x)\in(0,+\infty)\times(0,+\infty):\,x>0\right\}$: there
    exists a positive constant $c_{\overline r}$ such that
\begin{equation}\label{conv1}
(t,x)\in B_{\overline r}(\overline t,\overline x)\quad\Rightarrow \quad x>c_{\overline r}\,.
\end{equation}
Let $p=\varphi_x(\overline t,\overline x)$ and let $v^R=v^R(t,x)$  be a solution of the cell problem 
\begin{equation}\label{conv3}
v_t^R+H_R\left(t,x,p+v^R_x\right)=\bar H_R(p)\quad\text{in}\quad\R\times\R\,.
\end{equation}
We claim that if $\varepsilon>0$ is small enough, the perturbed test function \cite{evans}
$$\varphi^\varepsilon(t,x)=\varphi(t,x)+\varepsilon v^R\left(\frac t\varepsilon,\frac x\varepsilon\right)$$
satisfies, in the viscosity sense, the inequality
\begin{equation}\label{conv4}
\varphi^\varepsilon_t+H\left(\frac t\varepsilon,\frac x\varepsilon,\varphi_x^\varepsilon\right)
\geq\frac\theta2\quad\text{in}\quad B_r(\overline t,\overline x)
\end{equation}
for sufficiently small $r>0$. To see this let $\psi$ be a test
function touching $\varphi^\varepsilon$ from below at $(t_1,x_1)\in
B_r(\overline t,\overline x)\subseteq B_{\overline r}(\overline
t,\overline x)$. In this way the function
\[\eta(s,y)=\frac{1}{\varepsilon}\left(\psi(\varepsilon s,\varepsilon y)
-\varphi(\varepsilon s, \varepsilon y)\right)\]
touches $v^R$ from below at
$(s_1,y_1)=\left(\frac{t_1}{\varepsilon},\frac{x_1}{\varepsilon}\right)$
and (\ref{conv3}) yields
\begin{equation}\label{conv5}
\psi_t(t_1,x_1)-\varphi_t(t_1,x_1)+H_R\left(\frac{t_1}{\varepsilon},\frac{x_1}{\varepsilon},p
+\psi_x(t_1,x_1)-\varphi_x(t_1,x_1)\right)\geq \bar H_R(p).
\end{equation}
Since (\ref{conv1}) implies that $\frac x\varepsilon\to+\infty$, as
$\varepsilon\to0$, uniformly with respect to $(t,x)\in B_{\overline
  r}(\overline t,\overline x)$, we can find, owing to \textbf{(A5)},
an $\varepsilon_0>0$ independent of $\psi$ and $(t_1,x_1)$ such that
the inequality
\begin{equation}\label{conv6}
H\left(\frac{t_1}{\varepsilon},\frac{x_1}{\varepsilon},\psi_x(t_1,x_1)\right)\geq
H_R\left(\frac{t_1}{\varepsilon},\frac{x_1}{\varepsilon},\psi_x(t_1,x_1)\right)
-\frac\theta4
\end{equation}
holds true for $\varepsilon<\varepsilon_0$. Combining
(\ref{conv2})-(\ref{conv5})-(\ref{conv6}) and using the continuity of
$\varphi_x$ and $\varphi_t$ we have
\begin{equation*}
\begin{split}
&\psi_t(t_1,x_1)+H\left(\frac{t_1}{\varepsilon},\frac{x_1}{\varepsilon},\psi_x(t_1,x_1)\right)\\
&\geq\psi_t(t_1,x_1)+H_R\left(\frac{t_1}{\varepsilon},\frac{x_1}{\varepsilon},p+\psi_x(t_1,x_1)-\varphi_x(t_1,x_1)\right)+\\
&\quad
  H_R\left(\frac{t_1}{\varepsilon},\frac{x_1}{\varepsilon},\psi_x(t_1,x_1)\right)-H_R\left(\frac{t_1}{\varepsilon},\frac{x_1}{\varepsilon},\varphi_x(\bar
  t, \bar x)+\psi_x(t_1,x_1)-\varphi_x(t_1,x_1)\right)-\frac\theta4\\
&\geq\frac\theta2
\end{split}
\end{equation*}
if $r$ is sufficiently close to 0. The claim (\ref{conv4}) is
proved. 

Since $\varphi$ is strictly above $\overline u$, if $\varepsilon$ and $r$ are small enough
\[u^\varepsilon+\kappa_r\leq\varphi^\varepsilon\quad\text{on}\quad\partial B_r(\overline t, \overline x)\]
for a suitable positive constant $\kappa_r$. By comparison principle we deduce
\[u^\varepsilon+\kappa_r\leq\varphi^\varepsilon\quad\text{in}\quad B_r(\overline t, \overline x)\]
and passing to the limit as $\eps \to 0$ and $(t,x) \to (\bar t,\bar
x)$ on both sides of the previous inequality, we produce the
contradiction
\[\overline u(\overline t,\overline x)<\overline u(\overline t,\overline x)
+\kappa_r\leq \varphi(\overline t,\overline x)=\overline u(\overline
t,\overline x)\,. \qedhere\]
\end{proof}

\subsection{Proof of Lemma~\ref{lem:unique}}

\begin{proof}[Proof of Lemma~\ref{lem:unique}]
We first adress uniqueness. Let us assume that we have two solutions
$(v^i,\lambda^i)$ for $i=1,2$ of (\ref{eq:cell-alpha}). Let
$$u^i(t,x)=v^i(t,x)+px - \lambda^i t$$
Then $u^i$ solves
$$u^i_t + H_\alpha(t,x,u^i_x)=0$$
with
$$u^1(0,x)\le u^2(0,x) +C$$
The comparison principle implies
$$u^1\le u^2 +C \quad \mbox{for all}\quad t>0$$
and then $\lambda^1\ge \lambda^2$. Similarly we get the reverse inequality and then $\lambda^1=\lambda^2$.

We now turn to the continuity of the map $p\mapsto \bar
H_\alpha(p)$. It follows from the stability of viscosity sub- and
super-solutions, from the fact that the constant $C$ in \eqref{eq::g4}
is bounded for bounded $p$'s and from the comparison principle.
This achieves the proof of the lemma. \end{proof}

\subsection{Sketch of the proof of Proposition~\ref{prop:comp}}
\label{sec:comp}

\begin{proof}[Sketch of the proof of Proposition~\ref{prop:comp}]
Consider 
\[ M_\nu = \sup_{x \in [\rho_1,\rho_2], s,t \in \R} \left\{
u(t,x) - v(s,x) - \frac{(t-s)^2}{2\nu} \right\}. \]
We want to prove that 
\[ M = \lim_{\nu \to 0} M_\nu \le 0. \]
We argue by contradiction by assuming that $M>0$. The supremum
defining $M_\nu$ is reached, let  $s_\nu,t_\nu,x_\nu$ denote a maximizer. 
Choose $\nu$ small enough so that $M_\nu \ge \frac{M}2 >0$. We classically get,
\[ |t_\nu - s_\nu| \le C \sqrt{\nu}.\]

If there exists $\nu_n \to 0$ such that $x_{\nu_n} = \rho_1$ for all
$n \in \N$, then \[ \frac{M}2 \le M_{\nu_n} \le
  U_0(t_{\nu_n}) -U_0(s_{\nu_n}) \le \omega_0 (t_{\nu_n} -s_{\nu_n})
  \le \omega_0 (C \sqrt{\nu_n})\] where $\omega_0$ denotes the
modulus of continuity of $U_0$. The contradiction $M\le 0$ is obtained
by letting $n$ go to $+\infty$.

Hence, we can assume that for $\nu$ small enough, $x_\nu > \rho_1$.
Reasoning as in \cite[Theorem~7.8]{im}, we can easily reduce to the
case where $H(t_\nu,x_\nu,\cdot)$ reaches its minimum for
$p=p_0=0$. We can also consider the vertex test function $G^\gamma$
associated with the single Hamiltonian $H$ (using notation of
\cite{im}, it corresponds to the case $N=1$) and the free parameter
$\gamma$.  If $x_\nu<\rho_2$, then $G^\gamma(x,y)$ reduces to the
standard test function $\frac{(x-y)^2}{2}$.

We next consider 
\[ M_{\nu,\eps} = \sup_{\stackrel{x,y \in [\rho_1,\rho_2] \cap \overline{B_r(x_\nu)}}{s,t \in \R}} \left\{
u(t,x) - v(s,y) - \frac{(t-s)^2}{2\nu} - \eps G^\gamma (\eps^{-1}x,\eps^{-1}y)
- \varphi^\nu (t,s,x) \right\} \]
where $r=r_\nu$ is chosen so that $\rho_1 \notin \overline{B_r (x_\nu)}$ and 
 the localization function
\[ \varphi^\nu (t,s,x) = \frac12((t-t_\nu)^2 + (s-s_\nu)^2 +
(x-x_\nu)^2).\]
The supremum defining $M_{\nu,\eps}$ is reached and if $(t,s,x,y)$
denotes a maximizer, then 
\[ (t,s,x,y) \to (t_\nu,s_\nu,x_\nu,x_\nu) \quad \text{ as }
(\eps,\gamma) \to 0 .\]
In particular, $x,y \in B_r (x_\nu)$ for $\eps$ and $\gamma$ small
enough. The remaining of the proof is completely analogous (in fact
much simpler). 
\end{proof}

\subsection{Construction of $\lambda_\rho$ in the proof of Lemma~\ref{lem:cor-ok}}

In order to get $\lambda_\rho$, it is enough to apply the following
lemma. 
\begin{lemma}
Let $u$ be the solution of a Hamilton-Jacobi equation of evolution-type
submitted to the initial condition: $u (0,x) = 0$ and posed
on a compact set $K$. Assume that 
\begin{itemize}
\item the comparison principle holds true;
\item $u$ is $L$-globally Lipschitz continuous in time and space;
\item $u(k+\cdot,\cdot)+C$ is a solution for all $k \in \N$ and $C \in
  \R$.
\end{itemize}
There then exists $\lambda \in \R$ such that 
\[ |u(t,x) - \lambda t | \le C_0  \]
and 
\[ |\lambda | \le L \]
where  $C_0 =  L (2+3\rho)$ if $\rho$ denotes the
diameter of $K$. 
\end{lemma}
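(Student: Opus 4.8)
The plan is to establish, via the comparison principle and the third hypothesis, that the ``unit-time increments'' of $u$ form a sub-/super-additive sequence, from which $\lambda$ is extracted by Fekete's lemma. Since $K$ is compact and $u$ is continuous, set for $n\in\N$
\[ \Delta_n=\max_{x\in K}u(n,x),\qquad \delta_n=\min_{x\in K}u(n,x). \]
First I would prove that for every $t\ge 0$ and $x\in K$,
\[ u(t,x)+\delta_n\le u(t+n,x)\le u(t,x)+\Delta_n. \]
Indeed, by the third hypothesis both $(t,x)\mapsto u(t+n,x)$ and $(t,x)\mapsto u(t,x)+\Delta_n$ solve the equation on $[0,+\infty)\times K$; at $t=0$ one has $u(n,\cdot)\le\Delta_n=u(0,\cdot)+\Delta_n$ since $u(0,\cdot)\equiv 0$, so the comparison principle gives the upper estimate, and the lower one follows by the symmetric comparison of $u(\cdot,\cdot)+\delta_n$ with $u(n+\cdot,\cdot)$.

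Next, evaluating this at $t=m\in\N$ and taking $\max$ (resp.\ $\min$) over $x\in K$ yields
\[ \Delta_{n+m}\le\Delta_n+\Delta_m,\qquad \delta_{n+m}\ge\delta_n+\delta_m, \]
so $(\Delta_n)_n$ is subadditive and $(\delta_n)_n$ superadditive. Because $u(0,\cdot)\equiv 0$ and $u$ is $L$-Lipschitz in time, $|\Delta_1|,|\delta_1|\le L$, hence $-nL\le\delta_n\le\Delta_n\le nL$, and Fekete's lemma gives $\Delta_n/n\to\lambda_+:=\inf_{n\ge 1}\Delta_n/n$ and $\delta_n/n\to\lambda_-:=\sup_{n\ge 1}\delta_n/n$. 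Since $u$ is $L$-Lipschitz in space, $\Delta_n-\delta_n\le L\rho$ with $\rho=\operatorname{diam}K$, so dividing by $n$ and letting $n\to\infty$ forces $\lambda_+=\lambda_-=:\lambda$; moreover $\delta_n\le n\lambda\le\Delta_n$ for every $n$, and $-L\le\delta_1\le\lambda\le\Delta_1\le L$, i.e.\ $|\lambda|\le L$.

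Finally, for $n\in\N$ and $x\in K$ both $u(n,x)$ and $n\lambda$ belong to $[\delta_n,\Delta_n]$, whence $|u(n,x)-n\lambda|\le\Delta_n-\delta_n\le L\rho$. For arbitrary $t\ge 0$ write $t=n+s$ with $n=\lfloor t\rfloor\in\N$ and $s\in[0,1)$; then the time-Lipschitz bound and $|\lambda|\le L$ give
\[ |u(t,x)-\lambda t|\le|u(t,x)-u(n,x)|+|u(n,x)-n\lambda|+|\lambda|\,s\le L+L\rho+L=L(2+\rho)\le L(2+3\rho)=:C_0, \]
which is the claim. The step I expect to be the main obstacle is the passage to sub-/super-additivity: one must recognize that invariance of the solution set under integer time shifts and under additive constants (the third hypothesis), combined with the comparison principle, turns $\Delta_n$ and $\delta_n$ into Fekete-type sequences, and that the spatial Lipschitz bound is precisely what makes the two resulting ergodic constants coincide; everything else is bookkeeping.
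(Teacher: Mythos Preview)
Your argument is correct and in fact slightly cleaner than the paper's. Both proofs rest on a Fekete-type sub/super-additivity extracted from the comparison principle together with the invariance hypothesis, but they organise the computation differently. The paper fixes a base point $0\in K$, defines for continuous $T>0$
\[
\lambda^{\pm}(T)=\sup_{\tau\ge 0}/\inf_{\tau\ge 0}\ \frac{u(\tau+T,0)-u(\tau,0)}{T},
\]
observes that $T\mapsto \pm T\lambda^{\pm}(T)$ are sub-additive, and then uses comparison (after writing $\tau^{+}=k+\tau^{-}+\beta$ with $k\in\Z$) together with the spatial Lipschitz bound to prove $|\lambda^{+}(T)-\lambda^{-}(T)|\le 2L(1+\rho)/T$; this yields $\lambda^{+}=\lambda^{-}=\lambda$ and $|u(t,0)-\lambda t|\le 2L(1+\rho)$, and a further $L\rho$ is paid to pass to an arbitrary $x$, giving $C_0=L(2+3\rho)$. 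You instead absorb the spatial variable from the outset by tracking $\Delta_n=\max_K u(n,\cdot)$ and $\delta_n=\min_K u(n,\cdot)$ at integer times; comparison gives sub/super-additivity directly, and the spatial Lipschitz bound enters only once through $\Delta_n-\delta_n\le L\rho$, which both forces the two Fekete limits to coincide and provides the integer-time estimate. This is why you end up with the sharper constant $L(2+\rho)\le C_0$. The trade-off is minor: the paper's formulation with a continuous parameter $T$ and a single base point is a touch more flexible, but on a compact domain your version is more economical.
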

\begin{proof}
Define 
\[ \lambda^+ (T) = \sup_{\tau \ge 0 } \frac{u (\tau+T,0) -
  u(\tau,0)}{T} \quad \text{ and } \quad
\lambda^- (T) = \inf_{\tau \ge 0 } \frac{u (\tau+T,0) -
  u(\tau,0)}{T}. \]
Remark that $T \mapsto \pm T \lambda^\pm (T)$ is sub-additive. Remark that
the fact that $u$ is $L$-Lipschitz continuous with respect to time
implies that $\lambda^\pm(T)$ are both finite:
\[  |\lambda^\pm(T)| \le L. \]
 the ergodic theorem
implies that $\lambda^\pm (T)$ converges towards $\lambda^\pm$ and 
\[\lambda^+ = \inf_{T>0} \lambda^+ (T) \quad \text{ and } \quad 
\lambda^- = \sup_{T>0} \lambda^-(T).\] If moreover
\begin{equation}\label{estim:lambda-pm}
 | \lambda^+ (T) -  \lambda^-(T) | \le \frac{C}T,
\end{equation}
then the proof of the lemma is complete. Indeed,
\eqref{estim:lambda-pm} implies in particular that
$\lambda^+=\lambda^-$ and 
\[ - \frac{C}T \le \lambda^- (T) - \lambda \le \lambda^+ (T) - \lambda
\le \frac{C}T.\] 
This implies that 
\[ |u(t,0) - \lambda t | \le C.\]
Finally, we get 
\[ |u (t,x) - \lambda t | \le C + L \rho.\] 

It remains to prove \eqref{estim:lambda-pm}. There exists $k\in \Z$
and $\beta \in [0,1)$ such that $\tau^+ = k + \tau^- +
\beta$. Moreover, 
\[ u(\tau^+,x)  \le u (\tau^-+\beta,x) + u(\tau^+,0) -
u(\tau^-+\beta,0) + 2 L \rho \]
where $\rho = \mathrm{diam} \; K$. Now remark that
$u(\tau^-+\beta+t,x)+D$ is a solution in $[\tau^+,+\infty)$ for all
constant $D$. Hence, we get by comparison that for all $t>0$ and $x
\in K$,
\[ u (\tau^++t,x) \le u(\tau^-+\beta +t,x) + u(\tau^+,0) -
u(\tau^-+\beta,0) + 2 L \rho .\]
In particular, 
\begin{align*} 
u (\tau^+ +T,0) -u(\tau^+,0) & \le u(\tau^-+ \beta + T,0) -
u (\tau^-+\beta, 0)  + 2 L \rho \\
& \le u(\tau^-+ T,0) - u (\tau^-, 0) + 2L (1+\rho).
\end{align*} 
Finally, we get (after letting $\eps \to 0$),
\[ \lambda^+(T) \le \lambda^- (T) + \frac{2L(1+\rho)}{T}.\]
Similarly, we can get 
\[ \lambda^+(T) \ge \lambda^- (T) - \frac{2L(1+\rho)}{T}.\]
This implies \eqref{estim:lambda-pm} with $C = 2 L (1+\rho)$.
The proof of the lemma is now complete. 
\end{proof}


\nocite{*}
\bibliographystyle{plain}
\bibliography{gim}

\end{document}